\newcommand{\eproof}{\mbox{\ }\hfill $\Box$ \par \vskip 10pt}
\newtheorem{Theorem}{Theorem}[section]
\newtheorem{lemma}[Theorem]{Lemma}
\newtheorem{prop}[Theorem]{Proposition}
\newtheorem{rem}[Theorem]{Remark}
\numberwithin{equation}{section}
\def\C{{\mathbb C}}
\def\R{{\mathbb R}}
\def\Re{{\rm Re}\:}
\def\Im{{\rm Im}\:}
\def\cal{\mathcal}
\newcommand*{\defeq}{\mathrel{\vcenter{\baselineskip0.5ex \lineskiplimit0pt

                     \hbox{\scriptsize.}\hbox{\scriptsize.}}}%
                     =}
\begin{document}

\title[Exponential local energy decay]{Exponential local energy decay of solutions to the wave equation with $L^\infty$ electric and
magnetic potentials}

\author[A. Larra\'in-Hubach, J. Shapiro, G. Vodev]{Andr\'es Larra\'in-Hubach, Jacob Shapiro and Georgi Vodev}

\address {Department of Mathematics, University of Dayton, Dayton, OH 45469-2316, USA}
\email{alarrainhubach1@udayton.edu}

\address {Department of Mathematics, University of Dayton, Dayton, OH 45469-2316, USA}
\email{jshapiro1@udayton.edu}

\address {Universit\'e de Nantes, Laboratoire de Math\'ematiques Jean Leray, 2 rue de la Houssini\`ere, BP 92208, 44322 Nantes Cedex 03, France}
\email{Georgi.Vodev@univ-nantes.fr}

\date{}

\begin{abstract} We prove sharp resolvent estimates for the magnetic Schr\"odinger operator in $\R^d$, $d\ge 3$,
with $L^\infty$ short-range electric and magnetic potentials. We also show that these resolvent estimates still hold for the
Dirichlet self-adjoint realization of the Schr\"odinger operator in the exterior of a non-trapping obstacle in $\R^d$, $d\ge 2$,
provided the magnetic potential is supposed identically zero. As an application of the resolvent estimates, we obtain an 
exponential decay of the local energy of solutions to the wave equation with $L^\infty$ electric and
magnetic potentials which decay exponentially at infinity,
in all odd and even dimensions, provided the low frequencies are cut off in a suitable way.
We also show that in odd dimensions there is no need to cut off the low frequencies in order to get an exponential local energy decay,
provided zero is neither an eigenvalue nor a resonance. 

Key words: Schr\"odinger operator, electric and magnetic potentials, resolvent estimates, local energy decay.

\end{abstract} 

\maketitle

\setcounter{section}{0}

\section{Introduction} \label{s1}
Let $\cal{O}\subseteq\mathbb{R}^d$, $d\ge 2$, be a possibly empty, bounded domain with smooth boundary such that 
$\Omega=\mathbb{R}^d\setminus\cal{O}$ is connected. In this paper we investigate the magnetic Schr\"odinger operator
\begin{equation} \label{eq:1.1}
P=(i\nabla+b(x))^2+V(x) : L^2(\Omega) \to L^2(\Omega)
\end{equation}
from the viewpoint of resolvent estimates. The magnetic potential $b : \R^d \to \R^d$ and electric potential $V : \R^d \to \R$ are assumed to have $L^\infty$ regularity. Leveraging these resolvent estimates, our primary goal is to establish conditions for exponential weighted energy decay for solutions to the associated wave equation
\begin{equation} \label{eq:1.2}
\begin{cases}
(\partial^2_t + P)u(t,x) = 0 & \text{in } \R \times \Omega, \\
u(t,x) = 0 & \text{on } \R \times \partial \Omega, \\
u(0,x) = f_1(x), \, \partial_tu(0,x) = f_2(x) & \text{in } \Omega.  
\end{cases}
\end{equation}

To set the stage, let us recall the classical results for the free wave equation, where both $b$ and $V$ are identically zero. In this simpler setting, if $\mathcal{O} = \emptyset$ (so $\Omega = \R^d$), Huygen's principle implies that when $d \ge 3$ is odd, the energy of the solution to \eqref{eq:1.2} within any fixed compact set decays to zero in finite time.  On the other hand, when $\Omega \neq \emptyset$ (i.e, $\Omega$ is an exterior domain, with $b$ and $V$ still vanishing), the decay of local energy for solutions to \eqref{eq:1.2} is related to the dynamics of the underlying Hamiltonian flow. The non-trapping condition, where all geodesics escape to infinity, is well known to be related to rapid energy decay.  This condition is known to hold for specific geometries, such as convex obstacles and more generally for obstacles where an escape function can be constructed. Foundational works by Lax, Morawetz, Phillips, Ralston and Strauss \cite{kn:lmp, kn:M1, kn:M2, kn:MRS} address such scenarios and the resulting decay, utilizing multiplier methods and associated properties of the resolvent of the Laplacian. More broadly, for non-trapping geometries, the resolvent satisfies a characteristic high frequency bound \cite[Theorem 4.43]{kn:DZ}. Our assumption for obstacles, stated as \eqref{eq:1.4} below, is a resolvent estimate of this type.

When non-zero potentials $b$ and $V$ are present, results on energy decay draw from the work of Vainberg \cite{kn:Va} and Melrose-Sj\"ostrand \cite{kn:MS1, kn:MS2}. For example, in the case of smooth potentials of compact support, the local energy is known to decay like $O(e^{-Ct})$ for some $C > 0$ when $d \ge 3$ is odd, and like $O(t^{-2d})$ when $d \ge 2$ is even. Vainberg \cite{kn:Va} showed these decay rates apply to compactly supported perturbations of the Laplacian satisfying the Generalized Huygens Principle (as defined in \cite{kn:V2a}). Work by Melrose and Sj\"ostrand on the propagation of singularities \cite{kn:MS1, kn:MS2} further implies that this principle is satisfied by a broad class of smooth non-trapping perturbations of the Laplacian, including those with smooth compactly supported potentials.

We now specify the assumptions for our analysis. In what follows, $\|\cdot\|$ and
$\|\cdot\|_1$ denote the operator norms $L^2(\Omega)\to L^2(\Omega)$ and $H^1(\Omega)\to L^2(\Omega)$, respectively.  We consider two main scenarios:\\
a) $\cal{O} = \emptyset$ (so that $\Omega = \R^d$), $d\ge 3$,\\
b) $\cal{O} \neq \emptyset$ and $b\equiv 0$. \\
For both cases we assume the potentials satisfy:
\begin{equation}\label{eq:1.3}
|V(x)|+|b(x)|\le Ce^{-c\langle x\rangle},
\end{equation}
where $\langle x \rangle \defeq (|x|^2+1)^{1/2}$ and $C,c>0$ are some constants. In case b) (exterior domain, $V$ only), we impose a non-trapping condition on the obstacle $\mathcal{O}$ via a high frequency resolvent estimate for the Dirichlet Laplacian $\widetilde P =-\Delta$ on $L^2(\Omega)$. Let $\chi\in C^\infty(\mathbb{R}^d; [0,1])$ be of compact support such that $\chi =1$ near $\overline{\mathcal{O}}$. Define multiplication by $\chi$ on $L^2(\Omega)$ by $u \mapsto \chi \rvert_\Omega u$. The operator $\widetilde P$ can be viewed as a \textit{black box Hamiltonian} in the sense of Sj\"ostrand and Zworski \cite{kn:SZ91}, as defined in \cite[Definition 4.1]{kn:DZ}. By the analytic Fredholm theorem \cite[Theorem 4.4]{kn:DZ}, the cutoff resolvent $\chi (\widetilde P - \lambda^2)^{-1} \chi: L^2(\Omega) \to D(\widetilde{P})$ continues meromorphically from $\{\text{Im} \lambda < 0\}$ to the whole complex plane $\mathbb{C}$ if $d$ is odd, and to the Riemann surface of the logarithm if $d$ is even. The poles of this continuation are called \textit{resonances}. Our non-trapping condition for case b) is the high-frequency bound:
\begin{equation}\label{eq:1.4}
\left\|\chi(\widetilde P-\lambda^2)^{-1}\chi\right\|\le C\lambda^{-1},\quad \lambda  \ge\lambda_0,
\end{equation}
for some constants $C,\lambda_0>0$. We note that the norm in \eqref{eq:1.4} is bounded by a constant also for $0\le\lambda \le\lambda_0$ for arbitrary obstacles, as proved in \cite[Appendix B]{kn:B1}. No separate non-trapping condition is imposed for case a) beyond the condition \eqref{eq:1.3}.

Hereafter, $P$ denotes the self-adjoint realization of the operator $(i\nabla+b)^2+V$ on the Hilbert space
$L^2(\mathbb{R}^d)$ in the case a), and the Dirichlet self-adjoint realization of $-\Delta+V$ in the case b). For the case a), Appendix \ref{self-adjointness mag Schro appendix} details the construction of $P$ via a quadratic form on $H^1(\R^d)$. For case b), we recall from \cite[Section 6.1.2]{kn:Bo} that the domain of $P$ is the intersection $H^1_0(\Omega) \cap H^2(\Omega)$ of Sobolev spaces (we define $H^1_0(\Omega)$ as the closure in $H^1$-norm of smooth and compactly supported functions on $\Omega$). In case b) we will at times make use of the Green formula
\begin{equation*}
\langle \nabla u, \nabla v \rangle_{L^2(\Omega)} = \langle u, -\Delta v\rangle_{L^2(\Omega)}, \qquad u, \, v \in H^1_0(\Omega) \cap H^2(\Omega). 
\end{equation*} 

In both scenarios, we assume $P\ge 0$, for which a sufficient condition is $V\ge 0$. The domain of the square root of a nonnegative self-adjoint operator coincides with its quadratic form domain \cite[Section 3.1, (3.52) and (3.53)]{kn:Te}. Consequently, in case a), the form domain is $H^1(\R^d)$, while in case b) it is $H^1_0(\Omega)$. 

The solution to the wave equation (\ref{eq:1.2}) is given by
\begin{equation}\label{eq:1.5}
u(t)=\cos\left(t\sqrt{P}\right)f_1+P^{-1/2}\sin\left(t\sqrt{P}\right)f_2.
\end{equation}
We define the weight $\mu(x)=e^{-c\langle x\rangle/2}$. For some of our results it is important to suppose that zero is neither an eigenvalue nor a resonance of $P$.
More precisely, we require the following low frequency resolvent bound: there exist constants $C> 0$ and $0<\delta_0, \, \varepsilon_0 \ll 1$ such that
\begin{equation}\label{eq:1.6}
\sum_{\ell=0}^1\left\|\mu\nabla^\ell(P-\lambda^2\pm i\varepsilon)^{-1}\mu\right\|\le C,\qquad 0<\lambda\le\delta_0, \, 0 < \varepsilon \le \varepsilon_0.
\end{equation}
The condition \eqref{eq:1.6} is established in Section 5 in dimensions $d\ge 5$, under suitable 
short range conditions on $b$ and $V$, and provided $V \ge 0$ (see \eqref{eq:5.1}). Our main result is then:

\begin{Theorem} \label{1.1}
Assume the conditions \eqref{eq:1.3} and \eqref{eq:1.4} fulfilled. Then, given $t\gg 1$ and any 
$0<\delta\ll 1$ (independent of $t$), there exists a real-valued function
 $\psi_{\delta,t}\in C^\infty(\mathbb{R})$, $0\le\psi_{\delta,t}\le 1$, $\psi_{\delta,t}(\sigma)=0$ for $\sigma\le\delta$,
 $\psi_{\delta,t}(\sigma)=1$ for $\sigma\ge \delta\ln t+C_\delta$, for some constant $C_\delta>0$ independent of $t$, so that the estimates 
 \begin{equation}\label{eq:1.7}
\left\|\mu\cos(t\sqrt{P})\psi_{\delta,t}(P^{1/2})\mu\right\|+\sum_{\ell=0}^1
\left\|\mu\nabla^\ell P^{-1/2}\sin(t\sqrt{P})\psi_{\delta,t}(P^{1/2})\mu\right\|\le C_1e^{-c_1t},
\end{equation}
\begin{equation}\label{eq:1.8}
\left\|\mu P^{1/2}\sin(t\sqrt{P})\psi_{\delta,t}(P^{1/2})\mu\right\|_1+\sum_{\ell=0}^1
\left\|\mu\nabla^\ell \cos(t\sqrt{P})\psi_{\delta,t}(P^{1/2})\mu\right\|_1\le C_1e^{-c_1t}
\end{equation}
 hold with constants $C_1,c_1>0$ depending on $\delta$ but independent of $t$. 
 
 Furthermore, there exists a real-valued function
 $\psi_{\delta,t}^\sharp\in C^\infty(\mathbb{R})$, $0\le\psi_{\delta,t}^\sharp\le 1$, $\psi_{\delta,t}^\sharp(\sigma)=0$ for $\sigma\le\delta$,
 $\psi_{\delta,t}^\sharp(\sigma)=1$ for $\sigma\ge 2\delta$, so that the estimates (\ref{eq:1.7}) and (\ref{eq:1.8}) 
 hold with $\psi_{\delta,t}$ and $e^{-c_1t}$ replaced by $\psi_{\delta,t}^\sharp$ and $e^{-c_1t/\ln t}$, respectively. 
 
 If the dimension $d$ is odd and the condition
 (\ref{eq:1.6}) is assumed, then the estimates (\ref{eq:1.7}) and (\ref{eq:1.8}) hold with $\psi_{\delta,t}$ replaced by $1$. 
 \end{Theorem}
 
 \begin{rem} \label{1.2}
As an immediate consequence of Theorem \ref{1.1} and the formula (\ref{eq:1.5}), we obtain in odd dimensions,
under the condition (\ref{eq:1.6}), an exponential decay
of the local energy of the solution of the wave equation (\ref{eq:1.2}) with initial data $f_1$ and $f_2$ such that 
$\mu^{-1}f_1\in H^1$ and $\mu^{-1}f_2\in L^2$. 
\end{rem}

The first part of this theorem shows that exponential decay holds, regardless of dimension, when the low frequencies are suitably cut off by a function $\psi_{\delta,t}$ depending on $t$, and to the best of our knowledge this is the first result of this type. In general, one cannot expect decay faster than exponential, since a resonance contributes an exponentially decaying mode to the resonance expansion of a solution to the wave equation; see \cite[Sections 2.3 and 3.2.2]{kn:DZ}. A related result is due to Christiansen, Datchev, and Yang \cite[Theorem 2.1]{kn:CDY}, who obtain wave expansions with an exponentially decaying remainder under the assumptions of a low frequency resolvent expansion together with suitable resolvent bounds away from zero.

The cut-off functions $\psi_{\delta,t}$ and
$\psi_{\delta,t}^\sharp$ cannot be chosen
independent of $t$ if one wants to keep the same exponential decay in the right-hand side. This is due to the well-known fact that
there is no non-trivial analytic function $\psi(\sigma)$ vanishing for $\sigma\le\delta$.
However, there exists a function $\psi_s$ belonging to the Gevrey class $G^s$, $0<s<1$ being arbitrary, such that
$\psi_s(\sigma)=0$ for $\sigma\le\delta$, $\psi_s(\sigma)=1$ for $\sigma\ge 2\delta$. 
Therefore, one can see from the proof of Theorem \ref{1.1} in Section 6 that the estimates (\ref{eq:1.7}) and (\ref{eq:1.8}) hold with $\psi_{\delta,t}$ replaced by the cut-off function $\psi_s\in G^s$, depending on $\delta$ and independent of $t$, but with the weaker decay
$e^{-c_1t^s}$ in the right-hand sides.

This paper approaches the proof of Theorem \ref{1.1} through resolvent estimates derived from Carleman estimates and perturbation arguments. This strategy has precedents in related areas, including low frequency resolvent estimates and expansions for blackbox, short range, and nontrapping perturbations \cite{kn:V1, kn:B2, kn:V1a, kn:ZA, kn:BB, kn:CDY}, high frequency resolvent bounds for magnetic Schr\"odinger operators \cite{kn:V3, kn:MPS, kn:V5}, and Strichartz and smoothing estimates for magnetic Schr\"odinger operators \cite{kn:EGS, kn:CDL}. Note that the key resolvent bound (\ref{eq:3.2}) at high frequency was
first proved in \cite{kn:V3} for $d\ge 3$ and later in \cite{kn:MPS} for $d \ge 2$. Classical positive energy limiting absorption results in dimensions $d\geq 2$ include the work of Ikebe and Sait\={o} \cite{kn:IS} and H\"ormander \cite[Chapter XXX]{kn:H4}. These results assume continuity of the magnetic potential. In particular, Ikebe and Sait\={o} obtained weighted $L^2$ resolvent bounds locally uniformly away from zero and proved absolute continuity of the positive spectrum, assuming $C^1$ regularity of the magnetic vector potential, short range decay of the magnetic field, and suitable long and short range conditions on the electric potential.
 
Absence of positive eigenvalues is known under weaker regularity assumptions. Koch and Tataru \cite[Section 6]{kn:KT} treat rough short range electric and first order potentials under integrability, decay, and low frequency assumptions, permitting local singularities. Their first-order result applies for $d\geq 3$, with a modified hypothesis in dimension two, and imposes conditions directly on the vector potential. Avramska-Lukarska, Hundertmark, and Kova\v{r}'ik \cite{kn:AHK} obtain gauge invariant criteria in dimensions $d\geq 2$, formulated in terms of the magnetic field and quadratic forms, and allowing local singularities. 

The novelty of our method lies in its suitability for handling $L^\infty$ coefficients and it uniform treatment of the medium and high frequency regimes, which is needed for obtaining the decay described in Theorem \ref{1.1}. In Section \ref{Carleman est section}, we establish Carleman estimates for the free Laplacian on $\R^d$ that, for any fixed $\delta > 0$, hold uniformly for frequencies $\lambda \ge \delta$. (Propositions \ref{2.2} and \ref{2.4}). The Carleman estimates facilitate the derivation of limiting absorption resolvent bounds at medium and high frequencies for cases a) and b), which are Theorems \ref{3.1} and \ref{4.1}, respectively. For these resolvent bounds, it is enough to suppose short range conditions weaker than exponential decay. For case b), we employ a resolvent remainder argument, see \eqref{eq:4.7} and \eqref{eq:4.8}, to transfer the high frequency nontrapping bound \eqref{eq:1.4} to the perturbed resolvent. The smallness of the remainder at high frequency, captured by \eqref{eq:4.8}, does not apply in the case of a first order perturbation. That is why we assume $b$ vanishes when $\mathcal{O} \neq \emptyset$.

Subsequently, in Sections \ref{resolv bds mag Schro section} and \ref{resolv bds nontrap obs section}, under the assumption \eqref{eq:1.3}, we utilize resolvent identities to extend these limiting absorption bounds to the meromorphic continuation of the weighted resolvent $\mu(P -\lambda^2)^{-1}\mu$ (Theorems \ref{3.3} and \ref{4.2}). These identities allow us to leverage the meromorphic continuation of the free resolvent (see Appendix \ref{free resolv appendix} for a review of its properties). This is the step where the exponential decay of the coefficients plays a key role. Finally, Section \ref{time decay section} demonstrates how these resolvent estimates lead to the exponential decay rates presented in \eqref{eq:1.7} and \eqref{eq:1.8}. Furthermore, we show $\psi_{\delta, t}$ can be replaced by $1$ when the dimension is odd and condition \eqref{eq:1.6} is met. The arguments in this section draw inspiration from \cite[Section 3]{kn:V2}, which established polynomial-in-time decay for wave equation solutions on unbounded Riemannian manifolds with general smooth, nontrapping metrics. However, modifications to these arguments are introduced in our setting to exploit the meromorphic continuation of the resolvent, enabling us to obtain exponential decay.

\textbf{Future directions:} We do not treat the case $\mathcal{O}=\emptyset$ and $d=2$ in the presence of a magnetic potential. The obstruction comes from the Carleman estimate in Proposition \ref{2.2}, which in two dimensions has a loss near the origin. This loss is related to the effective potential arising after separation of variables in polar coordinates, which has a negative singularity at the origin when $d =2$. In case b), we avoid this by using the local Carleman estimate of \cite{kn:V4}; however, that estimate applies to electric potentials only and for Dirichlet boundary condition. As mentioned above, when $d=2$ and $\mathcal{O}=\emptyset$, the necessary Carleman estimate is known to hold for sufficiently large $\lambda$ \cite[Theorem 1.1]{kn:MPS}.

 We anticipate that in even dimensions, under condition \eqref{eq:1.6}, it is also possible replace $\psi_{\delta, t}$ 
 by $1$ in \eqref{eq:1.7} and \eqref{eq:1.8}, provided the right-hand sides are modified to $Ct^{-d}$. To achieve this it seems necessary to demonstrate control on the derivatives of the resolvent all the way down to zero frequency, as we have in odd dimensions (see Theorems \ref{3.3} and \ref{4.2}).

It would also be interesting to investigate time decay for short-range $L^\infty$ potentials--those not necessarily exhibiting exponential decay. In such scenarios, the weighted resolvent is unlikely to possess a meromorphic continuation. Consequently, control over the derivatives of the resolvent would require different arguments.

\medskip
\noindent{\textsc{Acknowledgements:}} We thank Kiril Datchev for helpful discussions. J. S. and A. L-H. gratefully acknowledge support from NSF DMS-2204322.\\

\section{Carleman estimates for the Euclidean Laplacian} \label{Carleman est section}

Let $r=|x|$ be the radial variable and define a Lipschitz function $\omega$ by
\begin{equation*}
\omega(r)=
\left\{
\begin{array}{lll}
 (r+1)^{2\ell}&\mbox{for}& 0\le r\le A,\\
 (A+1)^{2\ell}\left(1+(A+1)^{-2s+1}-(r+1)^{-2s+1}\right)&\mbox{for}& r\ge A,
\end{array}
\right.
\end{equation*}
with parameters $A\gg 1$ and satisfying
\begin{equation*}
0<s-\frac{1}{2}<\ell<\frac{2s}{3}<\frac{2}{3}.
\end{equation*}
This condition can be satisfied whenever $1/2 < s < 1$ since $s - 1/2 < 2s/3$ if an only if $s < 3/2$. The first derivative of $\omega$ is given by
\begin{equation*}
\omega'(r)=
\left\{
\begin{array}{lll}
 2\ell(r+1)^{2\ell-1}&\mbox{for}& 0\le r<A,\\
 (2s-1)(A+1)^{2\ell}(r+1)^{-2s}&\mbox{for}& r>A.
\end{array}
\right.
\end{equation*}
We also define a function 
$\varphi\in C^1([0,+\infty))$ such that $\varphi(0)=0$ and its first derivative is a Lipschitz
function given by
\begin{equation*}
\varphi'(r)=
\left\{
\begin{array}{lll}
 (r+1)^{-\ell}(2-(r+1)^{-\kappa})&\mbox{for}& 0\le r\le A,\\
 K_A(r+1)^{-2s}&\mbox{for}& r\ge A,
\end{array}
\right.
\end{equation*}
where 
\begin{equation*}
0<\kappa<2s-1,\quad \kappa<1-\ell,
\end{equation*}
 and 
\begin{equation*}
K_A=(A+1)^{2s-\ell}(2-(A+1)^{-\kappa}) \sim A^{2s-\ell}.
\end{equation*}
The main properties of the functions $\omega$ and $\varphi$ are given in the next lemma.

\begin{lemma} \label{2.1}
For $0<r<A$ we have  
\begin{equation}\label{eq:2.1}
2r^{-1}\omega(r)-\omega'(r)\ge2(1-\ell)(r+1)^{2\ell-1},
\end{equation}
\begin{equation}\label{eq:2.2}
\left(\omega(\varphi')^2\right)'(r)\ge 2\kappa(r+1)^{-1-\kappa}.
\end{equation}
For all $r>A$ we have 
\begin{equation}\label{eq:2.3}
2r^{-1}\omega(r)-\omega'(r)\ge CA^{2\ell}(r+1)^{-1},
\end{equation}
\begin{equation}\label{eq:2.4}
\left(\omega(\varphi')^2\right)'(r)\ge -CA^{-1-2\ell+2s}\omega'(r), 
\end{equation}
with some constant $C>0$. 
\end{lemma}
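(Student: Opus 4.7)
The plan is to verify each of the four inequalities by direct computation from the explicit piecewise formulas for $\omega$, $\omega'$, and $\varphi'$, using the parameter constraints $s - 1/2 < \ell < 2s/3 < 2/3$ and $0 < \kappa < \min(2s-1,\,1-\ell)$. Each inequality is confined to a single region ($0<r<A$ or $r>A$), so the work reduces to a sequence of elementary estimates; no gluing across $r = A$ is needed.

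For \eqref{eq:2.1}, substituting $\omega(r)=(r+1)^{2\ell}$ and $\omega'(r)=2\ell(r+1)^{2\ell-1}$ and factoring out $(r+1)^{2\ell-1}$ reduces the inequality to $(r+1)/r - \ell \ge 1 - \ell$, which follows since $(r+1)/r > 1$ and $\ell < 2/3 < 1$. For \eqref{eq:2.2} the crucial algebraic cancellation is that the $(r+1)^{2\ell}$ in $\omega$ absorbs the $(r+1)^{-2\ell}$ from $(\varphi')^2$, yielding $\omega(\varphi')^2 = (2-(r+1)^{-\kappa})^2$ on $[0,A]$. Its derivative is $2\kappa(2-(r+1)^{-\kappa})(r+1)^{-\kappa-1}$, and since $(r+1)^{-\kappa}\le 1$ the factor $2-(r+1)^{-\kappa}\ge 1$ yields at least $2\kappa(r+1)^{-1-\kappa}$, comfortably beating the claimed bound.

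For \eqref{eq:2.3}, in the outer region $r > A$ with $s > 1/2$ one has $(r+1)^{-2s+1} < (A+1)^{-2s+1}$, so $\omega(r) \ge (A+1)^{2\ell}$. Thus $2r^{-1}\omega(r) \ge 2(r+1)^{-1}(A+1)^{2\ell}$, while $(r+1)^{-2s} \le (r+1)^{-1}$ gives $\omega'(r) \le (2s-1)(A+1)^{2\ell}(r+1)^{-1}$. Subtracting and using $s < 1$ leaves a positive bracket bounded below by $2-(2s-1) > 1$, from which \eqref{eq:2.3} follows (the constant $(A+1)^{2\ell}$ absorbs into $A^{2\ell}$ since $\ell > 0$).

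Inequality \eqref{eq:2.4} is the most delicate and is where I expect the main obstacle. On $r > A$, setting $\beta = 1+(A+1)^{-2s+1} \le 2$, one computes
\begin{equation*}
\omega(r)(\varphi'(r))^2 = (A+1)^{2\ell}K_A^2\bigl[\beta(r+1)^{-4s} - (r+1)^{-6s+1}\bigr].
\end{equation*}
Differentiating produces a negative term $-4s\beta(A+1)^{2\ell}K_A^2(r+1)^{-4s-1}$ and a positive term from $(6s-1)(r+1)^{-6s}$ which may be discarded for a lower bound. Using $K_A^2 \lesssim A^{4s-2\ell}$ and the trade $(r+1)^{-4s-1} \le (A+1)^{-2s-1}(r+1)^{-2s}$ (valid since $r > A$), the negative term is bounded below by $-C\,A^{2s-1}(r+1)^{-2s}$; comparing to $\omega'(r) \sim A^{2\ell}(r+1)^{-2s}$, the exponents $2\ell + (4s-2\ell) + (-2s-1) = 2s-1$ combine to give exactly $-CA^{-1-2\ell+2s}\omega'(r)$. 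The primary obstacle is this exponent bookkeeping: one must invoke $r > A$ at precisely the right moment to convert a factor of $(r+1)^{-1}$ into $(A+1)^{-1}$, and the hypothesis $\ell < 2s/3$ is what ensures $-1-2\ell+2s < 0$, so that $\omega(\varphi')^2$ is genuinely a small perturbation of $\omega$ as $A \to \infty$.
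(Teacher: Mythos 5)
Your proof is correct and follows essentially the same route as the paper: direct verification from the explicit piecewise formulas, with (2.4) handled by dropping the positive part of $(\omega(\varphi')^2)'$ and bounding the negative part via $(r+1)^{-2s-1}\le (A+1)^{-2s-1}$ and $K_A^2\lesssim A^{4s-2\ell}$, which is exactly the paper's exponent bookkeeping in a slightly different grouping. One small aside to fix: the negativity of the exponent, $-1-2\ell+2s<0$, comes from the hypothesis $\ell>s-\tfrac{1}{2}$, not from $\ell<2s/3$ (and in any case it is not needed for the lemma itself, only later when $A$ is chosen in terms of $\tau$).
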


\begin{proof}
For $0< r < A$,
\begin{equation*}
2r^{-1}\omega(r) - \omega'(r) = 2r^{-1}(r + 1)^{2 \ell} - 2\ell (r + 1)^{2\ell -1} \ge (2- 2\ell)(r + 1)^{2\ell -1} > 0, 
\end{equation*}
\begin{equation*}
(\omega(\varphi')^2)' = ((2 - (r + 1)^{-\kappa})^2)' = 2\kappa (r + 1)^{-\kappa - 1}(2 - (r + 1)^{-\kappa})\ge 2\kappa (r + 1)^{-\kappa - 1}. 
\end{equation*}
On the other hand, for $r > A$, since $s < 1$,
\begin{equation*}
\begin{split}
2 r^{-1} \omega(r) - \omega'(r) &\ge (A+1)^{2\ell}(r^{-1} - (2s-1)(r + 1)^{-2s})\\
& \ge(A+1)^{2\ell}(2-2s)(r + 1)^{-1} \ge 0,
\end{split}
\end{equation*}
which is \eqref{eq:2.3}. Finally,
\begin{equation*}
\begin{split}
(\omega (\varphi')^2)' &= \omega'(\varphi')^2 + 2\omega \varphi'\varphi'' \\
& \ge -2 \frac{\omega}{\omega'} \varphi' |\varphi''| \omega' \\
& = -\frac{4s}{2s - 1} (1 + (A+1)^{-2s + 1} - (r + 1)^{-2s + 1})K_A^{2} (r + 1)^{-2s - 1}\omega' \\ 
&\ge -CK_A^{2}A^{-2s-1}\omega'(r)\\ 
&\ge -CA^{-1-2\ell + 2s} \omega'(r),
\end{split}
\end{equation*}
with some constant $C>0$, confirming \eqref{eq:2.4}.
\end{proof}

Set 
\begin{equation*}
P_{0,\varphi}(\tau)=-e^{\tau\varphi}\Delta e^{-\tau\varphi},
\end{equation*}
where $\tau\gg 1$ is a large parameter. Given a parameter $0<h\le 1$ we will denote by $H^1_h(\R^d)$ the
Sobolev space $H^1(\mathbb{R}^d)$ equipped with the norm $\|\cdot\|_{H^1_h}$ defined by
\begin{equation*}
\|f\|_{H^1_h}^2 \defeq \|f\|_{L^2}^2+h^{2}\|\nabla f\|_{L^2}^2.
\end{equation*}
Furthermore, $H^{-1}_h$ will denote the dual space of $H^1_h$ with respect to the scalar product 
$\langle\cdot,\cdot\rangle_{L^2}$ with the norm
\begin{equation*}
\|f\|_{H^{-1}_h} \defeq \sup_{0\neq g\in H^1_h}\frac{|\langle f,g\rangle_{L^2}|}{\|g\|_{H^1_h}}.
\end{equation*}
Let the function $\psi\in C_0^\infty(\mathbb{R}^d)$ be such that $\psi(x)=1$ for $|x|\le 1$. 
We first prove the following 

\begin{prop} \label{2.2} 
Let $d\ge 2$. 
Given any $\delta>0$, there are positive constants $C$, $A_0$ and $\tau_0$ such that if $A=A_0\tau^{2/(1+2\ell-2s)}$, for all $\tau\ge \tau_0$, $\lambda\ge\delta$, 
$0<\varepsilon\le 1$, and for all functions
$f\in H^2(\mathbb{R}^d)$ satisfying 
\begin{equation*}
\langle x\rangle^{s}(P_{0,\varphi}(\tau)-\lambda^2\pm i\varepsilon)(1-\psi)f\in L^2(\mathbb{R}^d),
\end{equation*}
 we have the estimate 
 \begin{equation}\label{eq:2.5}
\|\langle x\rangle^{-s}(1-\psi)f\|_{H^1_h}\le Ch\tau^{-1/2}\|\langle x\rangle^{s}
(P_{0,\varphi}(\tau)-\lambda^2\pm i\varepsilon)(1-\psi)f\|_{L^2}+CA^\ell(\varepsilon h)^{1/2}\|f\|_{L^2},
\end{equation}
where $h=(\lambda+\tau)^{-1}$. If $d\ge 3$, for all functions
$f\in H^2(\mathbb{R}^d)$ satisfying 
\begin{equation*}
\langle x\rangle^{s}(P_{0,\varphi}(\tau)-\lambda^2\pm i\varepsilon)f\in L^2(\mathbb{R}^d),
\end{equation*}
 we have the estimate 
 \begin{equation}\label{eq:2.6}
\|\langle x\rangle^{-s}f\|_{H^1_h}\le Ch\tau^{-1/2}\|\langle x\rangle^{s}
(P_{0,\varphi}(\tau)-\lambda^2\pm i\varepsilon)f\|_{L^2}+CA^\ell(\varepsilon h)^{1/2}\|f\|_{L^2}.
\end{equation} 
\end{prop}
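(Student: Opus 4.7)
The plan is to establish both Carleman estimates via a Morawetz-type multiplier argument adapted to the radial weight $\omega$. Writing $u = (1-\psi)f$ for \eqref{eq:2.5} or $u = f$ for \eqref{eq:2.6}, and letting $Q_\pm = P_{0,\varphi}(\tau) - \lambda^2 \pm i\varepsilon$, the central object I would study is
\begin{equation*}
\mathcal{I} \defeq 2\,\mathrm{Re}\bigl\langle Q_\pm u,\; \omega(r)\bigl(\partial_r + g(r)\bigr)u\bigr\rangle_{L^2(\R^d)},
\end{equation*}
where $g(r) = \omega'(r)/(2\omega(r)) + (d-1)/(2r)$ is chosen so that $\omega(\partial_r + g)$ is formally anti-self-adjoint on $L^2(\R^d)$. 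Expanding $P_{0,\varphi}(\tau) = -\Delta + 2\tau\varphi'\partial_r - \tau^2(\varphi')^2 + \tau\Delta\varphi$ and integrating by parts in polar coordinates gives an identity whose right-hand side decomposes naturally according to the terms of $P_{0,\varphi}(\tau)$. The $-\Delta$ piece generates positive contributions $\int\omega'(r)|\partial_r u|^2\,dx$ and $\int(2r^{-1}\omega - \omega')|x|^{-2}|\nabla_\theta u|^2\,dx$ (with $\nabla_\theta$ the spherical gradient). The $-\tau^2(\varphi')^2$ piece, combined with terms extracted from $2\tau\varphi'\partial_r$ via further integration by parts, would produce $\tau^2\int(\omega(\varphi')^2)'(r)|u|^2\,dx$. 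The $-\lambda^2$ piece yields $\lambda^2\int((d-1)r^{-1}\omega + \omega')|u|^2\,dx$, and the $\pm i\varepsilon$ piece contributes $\mp 2\varepsilon\,\mathrm{Im}\bigl\langle u,\omega(\partial_r + g)u\bigr\rangle$, bounded by $C\varepsilon A^\ell\|u\|_{L^2}\|\omega^{1/2}\nabla u\|_{L^2}$.

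The coercivity bounds \eqref{eq:2.1}--\eqref{eq:2.4} of Lemma \ref{2.1} then make the principal positive terms dominate. On $\{r<A\}$, \eqref{eq:2.1} gives $\omega'(r)\gtrsim\langle x\rangle^{-2s}$ (using $2\ell - 1 > -2s$, which follows from $\ell > s - 1/2$ and $s > 1/2$), and \eqref{eq:2.2} gives $\tau^2(\omega(\varphi')^2)'(r)\gtrsim \tau^2\langle x\rangle^{-1-\kappa}\gtrsim \tau^2\langle x\rangle^{-2s}$ (since $\kappa < 2s-1$); combined with $\lambda^2\omega'(r)\gtrsim \lambda^2\langle x\rangle^{-2s}$, this produces the coercive lower bound $(\lambda+\tau)^2\langle x\rangle^{-2s}|u|^2$. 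On $\{r>A\}$, \eqref{eq:2.3} preserves the angular and $\omega'|\partial_r u|^2$ contributions, while \eqref{eq:2.4} shows that $\tau^2(\omega(\varphi')^2)'$ can become negative but is dominated by $-C\tau^2 A^{-1-2\ell+2s}\omega'(r)$. The prescribed choice $A = A_0\tau^{2/(1+2\ell-2s)}$ then gives $\tau^2 A^{-(1+2\ell-2s)} = A_0^{-(1+2\ell-2s)}$, which can be made arbitrarily small by enlarging $A_0$, absorbing this negative piece into the positive $\omega'$-weighted contributions.

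Putting everything together yields a coercive identity of the form
\begin{equation*}
c\tau\bigl((\lambda+\tau)^2\|\langle x\rangle^{-s}u\|_{L^2}^2 + \|\langle x\rangle^{-s}\nabla u\|_{L^2}^2\bigr)\;\le\; |\mathcal{I}| + C\varepsilon A^\ell\,\|u\|_{L^2}\|\omega^{1/2}\nabla u\|_{L^2},
\end{equation*}
and bounding $|\mathcal{I}|$ by Cauchy--Schwarz with the split $2|ab|\le \tau^{-1}a^2+\tau b^2$, together with the pointwise bound $\omega(r)\le CA^{2\ell}$, allows me to absorb the quadratic contribution in $u$ back into the left-hand side. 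This produces
\begin{equation*}
\|\langle x\rangle^{-s}u\|_{H^1_h}^2\;\le\; C h^2\tau^{-1}\|\langle x\rangle^s Q_\pm u\|_{L^2}^2 + CA^{2\ell}\varepsilon h\,\|u\|_{L^2}^2,
\end{equation*}
with $h=(\lambda+\tau)^{-1}$; taking square roots gives \eqref{eq:2.5} (when $u=(1-\psi)f$, which vanishes near the origin). For \eqref{eq:2.6}, valid when $d\ge 3$, the same argument applies to $u = f$ directly, using Hardy's inequality $\int_{\R^d}|f|^2/|x|^2\,dx\le C_d\int_{\R^d}|\nabla f|^2\,dx$ to absorb the small-$r$ contributions coming from the $1/r$ factors in $g$ and in the angular term. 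This inequality fails for $d = 2$, which is why the cutoff $(1-\psi)$ is retained in \eqref{eq:2.5}.

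The main obstacles I expect are: (i) organizing the several integration-by-parts steps so that the cross-terms from the first-order piece $2\tau\varphi'\partial_r$ combine cleanly with the $[-\tau^2(\varphi')^2,\omega\partial_r]$ contribution to reconstitute the full derivative $(\omega(\varphi')^2)'$ that Lemma \ref{2.1} controls; (ii) tracking the $A$-dependence precisely, so that the choice $A = A_0\tau^{2/(1+2\ell-2s)}$ controls the unfavorable region $\{r>A\}$ through $A_0^{-(1+2\ell-2s)}$ while keeping the multiplier at the size $A^\ell$ recorded in the final estimate; and (iii) verifying that the $\varepsilon$-contribution yields exactly $A^\ell(\varepsilon h)^{1/2}\|f\|_{L^2}$ and not a larger quantity, which requires carefully distributing the semiclassical parameter $h=(\lambda+\tau)^{-1}$ between $\|u\|_{L^2}$ and $\|\nabla u\|_{L^2}$ in the final Cauchy--Schwarz step.
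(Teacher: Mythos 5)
Your overall architecture is the paper's: a radial multiplier/spherical-energy identity weighted by $\omega$, the coercivity bounds of Lemma \ref{2.1}, the choice $A=A_0\tau^{2/(1+2\ell-2s)}$ to tame the region $r>A$, and a final Cauchy--Schwarz with the split $\tau^{-1}/\tau$. However, there is an inconsistency at the crux of the positivity argument. You pick $g=\omega'/(2\omega)+(d-1)/(2r)$ precisely so that $M=\omega(\partial_r+g)$ is anti-self-adjoint on $L^2(\R^d)$; but then $2\,{\rm Re}\,\langle -\lambda^2 u,Mu\rangle=0$, so the $-\lambda^2$ piece produces no term at all, rather than the claimed $\lambda^2\int(\omega'+(d-1)r^{-1}\omega)|u|^2$. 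Likewise, since $-\tau^2(\varphi')^2$ is a real multiplication operator, pairing it against the anti-self-adjoint $M$ yields only the commutator term $\tau^2\int\omega\bigl((\varphi')^2\bigr)'|u|^2$, not $\tau^2\int\bigl(\omega(\varphi')^2\bigr)'|u|^2$: the difference, $\tau^2\int\omega'(\varphi')^2|u|^2$, is generated exactly by the symmetric part of the multiplier that your choice of $g$ eliminates. These are not cosmetic losses. On $r<A$ the positivity \eqref{eq:2.2} of Lemma \ref{2.1} depends on the combination $\bigl(\omega(\varphi')^2\bigr)'$ (the factor $\omega$ cancels the decay $(r+1)^{-2\ell}$ of $(\varphi')^2$; the quantity $\omega\bigl((\varphi')^2\bigr)'$ by itself is negative on most of $(0,A)$), and on $r>A$ the negative contribution permitted by \eqref{eq:2.4} can only be absorbed by the $\lambda^2\omega'$ term, using $\lambda\ge\delta$ and $A_0$ large. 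The repair is to drop the symmetric correction and use the multiplier $2\omega\partial_r$ itself (equivalently, the identity $\int_0^\infty(\omega E)'\,dr=0$ for the spherical energy $E$, which is what the paper does); the boundary contribution at $r=0$ is harmless because $u=(1-\psi)f$ vanishes near the origin for \eqref{eq:2.5}, and is handled by the Hardy/Poincar\'e inequality when $d\ge 3$ for \eqref{eq:2.6}.

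A second, related gap is the treatment of the $\pm i\varepsilon$ term. As you note, it produces an error of the form $\varepsilon A^{2\ell}\|u\|_{L^2}\|\nabla u\|_{L^2}$ with \emph{unweighted, global} norms, whereas your coercive left-hand side only controls $\|\langle x\rangle^{-s}u\|_{H^1_h}$. To convert this error into $A^{2\ell}\varepsilon(\lambda+\tau)\|u\|_{L^2}^2$, and hence into the stated $CA^{\ell}(\varepsilon h)^{1/2}\|f\|_{L^2}$, one needs an independent estimate obtained by pairing the equation with $u$ itself, namely $\int_0^\infty\|D_ru\|_0^2\,dr\lesssim(\lambda^2+\tau^2)\int_0^\infty\|u\|_0^2\,dr+\tau^{-2}\int_0^\infty\|(\mathcal{P}_\varphi(\tau)\pm i\varepsilon)u\|_0^2\,dr$, which is \eqref{eq:2.11} in the paper. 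Listing this as expected obstacle (iii) is not the same as supplying the mechanism; without such an auxiliary bound the $\varepsilon$-term cannot be closed, since $\|\nabla u\|_{L^2}$ is not dominated by anything on your left-hand side.
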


{\it Proof.} We will write $P_{0,\varphi}(\tau)$ in the polar coordinates 
$(r,w)\in\mathbb{R}^+\times\mathbb{S}^{d-1}$, $r=|x|$, $w=x/|x|$. Recall that  
$L^2(\mathbb{R}^d)=L^2(\mathbb{R}^+\times\mathbb{S}^{d-1}, r^{d-1}drdw)$. 
In what follows we denote by $\|\cdot\|_0$ and $\langle\cdot,\cdot\rangle_0$
the norm and the scalar product in $L^2(\mathbb{S}^{d-1})$. We take complex conjugation to occur in the first argument of $\langle\cdot,\cdot\rangle_0$. We make use of the identity
\begin{equation}\label{eq:2.7}
 r^{(d-1)/2}\Delta  r^{-(d-1)/2}=\partial_r^2+r^{-2}\Delta_w-(d-1)(d-3)(2r)^{-2},
\end{equation}
where $\Delta_w$ denotes the negative Laplace-Beltrami operator
on $\mathbb{S}^{d-1}$.  
Using (\ref{eq:2.7}), we write the operator 
\begin{equation*}
\mathcal{P}_\varphi(\tau)=r^{(d-1)/2}(P_{0,\varphi}(\tau)-\lambda^2)r^{-(d-1)/2}
\end{equation*}
in the form
\begin{equation*}
{\cal P}_\varphi(\tau)=D_r^2+r^{-2}(\Lambda+c_d)-\lambda^2-2i\tau\varphi'D_r+V_1+V_2,
\end{equation*}
where $D_r=i\partial_r$, $\Lambda=-\Delta_w$, and 
\begin{align*}
V_1&=-\tau^{2}(\varphi')^2, &&V_2=(d-1)(d-3)(2r)^{-2}+\tau\varphi'', &&c_d=0, &\text{ if }  d=2, \\
V_1&=-\tau^{2}(\varphi')^2, &&V_2=\tau\varphi'', &&c_d=(d-1)(d-3)/4, &\text{ if } d\ge 3.
\end{align*}
Set $u(r,w)=r^{(d-1)/2}(1-\psi(rw))f(rw)$ and, for $r>0$, $r\neq A$, 
\begin{equation*}
E(r)=-\left\langle (r^{-2}(\Lambda+c_d) -\lambda^2+V_1)u(r,\cdot),u(r,\cdot)\right\rangle_0+\|D_ru(r,\cdot)\|_0^2.
\end{equation*}
For the first derivative of $E$, we get in the sense of distributions on $(0, \infty)$,
\begin{equation*}
\begin{split}
E'(r) &=\frac{2}{r}\left\langle r^{-2}(\Lambda+c_d)u,u\right\rangle_0-\left\langle V_1'u,u\right\rangle_0 +4\tau\varphi'\|D_ru\|_0^2\\
&-2{\rm Im}\,\left\langle\mathcal{P}_\varphi(\tau)u,D_ru\right\rangle_0 + 2{\rm Im}\,\left\langle V_2u,D_ru\right\rangle_0.
\end{split}
\end{equation*}
If $\omega$ is as above, then
\begin{equation*}
\begin{split}
(\omega E)' &=\omega'E+\omega E' \\
&= (2r^{-1}\omega-\omega')\left\langle r^{-2}(\Lambda+c_d)u,u\right\rangle_0+\left\langle(\lambda^2\omega'-(\omega V_1)')u,u\right\rangle_0 \\
&+(\omega'+4\tau\varphi'\omega)\|D_ru\|_0^2 +2\omega{\rm Im}\,\left\langle V_2u,D_ru\right\rangle_0 \\
&-2\omega{\rm Im}\,\left\langle(\mathcal{P}_\varphi(\tau)\pm i\varepsilon)u,D_ru\right\rangle_0 \mp 2\varepsilon\omega{\rm Im}\,\left\langle u,D_ru\right\rangle_0.
\end{split}
\end{equation*}
For $x \in {\rm supp}\, u$ and $r = |x|$, we have
\begin{equation}\label{eq:2.8}
|V_2(r)|\lesssim 
\begin{cases}
 \tau(r+1)^{-1-\ell} &\mbox{for}\quad   r<A,\\
 \tau A^{2s-\ell}(r+1)^{-1-2s}+(r+1)^{-2} &\mbox{for}\quad r>A.
\end{cases}
\end{equation}
In what follows $C>0$ will be a constant which may depend on $\delta$ but is independent of $h$ and $\lambda$. 
Its precise value may change from line to line. 
We have the lower bound 
\begin{equation*}
\begin{split}
(\omega E)'(r) &\ge (2r^{-1}\omega-\omega')\left\langle r^{-2}(\Lambda + c_d) u,u\right\rangle_0+(\lambda^2\omega'-(\omega V_1)')\left\|u\right\|_0^2 \\
&+\left( \frac{\omega'}{2}+ \frac{7\tau\omega\varphi'}{2} \right)\|D_ru\|_0^2-C\omega^2|V_2|^2(\omega'+\tau\omega\varphi')^{-1}\left\|u\right\|_0^2 \\
&-4\omega^2(\omega'+\tau\omega\varphi')^{-1}\left\|(\mathcal{P}_\varphi(\tau)\pm i\varepsilon)u\right\|_0^2 -2\varepsilon\omega\left\|u\|_0\|D_ru\right\|_0 \\
&\ge(2r^{-1}\omega-\omega')\left\langle r^{-2}(\Lambda + c_d) u,u\right\rangle_0+n(r)\left\|u\right\|_0^2+\tau\omega\varphi'\|D_ru\|_0^2 \\
&-C\tau^{-1}\omega(\varphi')^{-1}\left\|(\mathcal{P}_\varphi(\tau)\pm i\varepsilon)u\right\|_0^2 -2\varepsilon\omega\left\|u\|_0\|D_ru\right\|_0,
\end{split}
\end{equation*}
where
\begin{equation*}
\begin{split}
n(r) &=\lambda^2\omega'-(\omega V_1)'-C\omega^2|V_2|^2(\tau\omega\varphi')^{-1} \\
&= \lambda^2\omega'+\tau^{2}(\omega(\varphi')^2)'-C\tau^{-1}|V_2|^2\omega(\varphi')^{-1}.
\end{split}
\end{equation*}
When $r < A$, $\omega (\varphi')^{-1}\lesssim (r + 1)^{3\ell}$. Thus in view of \eqref{eq:2.2} and \eqref{eq:2.8}, 
since $0 < \kappa < 2s-1$ and $\kappa < 1 - \ell$, we have for $r<A$, 
\begin{equation*}
\begin{split}
 n(r) &\ge \lambda^2\omega'+2\kappa\tau^{2}(r+1)^{-1-\kappa}-C\tau(r+1)^{-2+\ell} \\
 &= \lambda^2\omega'+\kappa\tau (r+1)^{-1-\kappa} ( 2\tau - C\kappa^{-1}(r+1)^{\kappa - (1- \ell)}) \\
 &\ge\lambda^2\omega'+\kappa\tau^{2}(r+1)^{-1-\kappa}\\
 &\ge\lambda^2\omega'+\kappa \tau^{2}(r+1)^{-2s},
 \end{split}
 \end{equation*}
provided $\tau$ is large enough.
To bound $n(r)$ from below for $r>A$ observe that, in view of (\ref{eq:2.8}),
in this case we have
\begin{equation*}
\begin{split}
 \frac{|V_2(r)|^2\omega(r)}{\omega'(r)\varphi'(r)}& \lesssim A^{\ell-2s}(r+1)^{4s}|V_2(r)|^2 \\
 &\lesssim \tau^2A^{2s-\ell}(r+1)^{-2}+ A^{\ell-2s}(r+1)^{4s-4} \\
 &\lesssim \tau^2A^{2s-\ell -2}+ A^{2s + \ell -4}\\
 &\lesssim \tau^2A^{2s -2 \ell-1}.
 \end{split}
 \end{equation*}
To get the last inequality we used $2s - \ell - 2 , \, 2s +\ell -4 < 2s - 1 - 2\ell$. 
From this and (\ref{eq:2.4}), for $r>A$, we get
\begin{equation*}
\begin{split}
 n(r) &\ge \omega'\left(\lambda^2-C\tau^{2}A^{2s-2\ell -1}\right) \\
 &=\omega'\left(\lambda^2-CA_0^{2s-2\ell-1}\right)\\
 &\ge 2\lambda^2\omega'/3, 
  \end{split}
 \end{equation*}
provided $A_0$ is taken large enough (recall that $s - 1/2 < \ell$). Combining this with $\lambda \ge \delta$, $4\ell/(1+2\ell-2s)\ge2$ (since $s > 1/2$), and
\begin{equation*}
\begin{split}
 \omega'(r) &= (2s-1)(A+1)^{2\ell}(r+1)^{-2s} \\
& \ge (2s -1) A_0 \tau^{4\ell/(1 + 2\ell -2s)} (r + 1)^{-2s} \\
 &\ge (2s -1) A_0 \tau^{2} (r + 1)^{-2s}
 \end{split}
\end{equation*}
for $r > A$, we get, taking $A_0$ larger if necessary,
 \begin{equation*}
 n(r) \ge \lambda^2\omega'/2+\tau^2(r+1)^{-2s}, \qquad r > A.
 \end{equation*}
 From the above inequalities,
 \begin{equation*}
\begin{split}
 (\omega E)'(r)&\ge(2r^{-1}\omega-\omega')\left\langle r^{-2}(\Lambda +c_d) u,u\right\rangle_0+2^{-1}(\omega'\lambda^2+\kappa \tau^2(r+1)^{-2s})\left\|u\right\|_0^2\\
&+\tau\omega\varphi'\|D_ru\|_0^2-C\tau^{-1}\omega(\varphi')^{-1}\left\|(\mathcal{P}_\varphi(\tau)\pm i\varepsilon)u\right\|_0^2
-2\varepsilon\omega\left\|u\|_0\|D_ru\right\|_0.
\end{split}
 \end{equation*}
Integrating this inequality and using that
$$\int_0^\infty (\omega E)'(r)dr=0,$$
we obtain 
\begin{equation}\label{eq:2.9}
\begin{split}
&\int_0^\infty(2r^{-1}\omega-\omega')\left\langle r^{-2}(\Lambda u + c_d),u\right\rangle_0dr+\int_0^\infty(\omega'\lambda^2
+\tau^2(r+1)^{-2s}) \left\|u\right\|_0^2dr\\
&+\tau\int_0^\infty \omega\varphi'\|D_ru\|_0^2dr\\
&\lesssim \tau^{-1}\int_0^\infty \omega(\varphi')^{-1}\left\|(\mathcal{P}_\varphi(\tau)\pm i\varepsilon)u\right\|_0^2dr
+\varepsilon\int_0^\infty\omega\|u\|_0\|D_ru\|_0dr.
\end{split}
\end{equation}
Observe now that (since $\ell < 2s/3$),
\begin{equation*}
\omega(r)\varphi'(r)^{-1}\lesssim (r+1)^{2s},\quad \omega(r)\lesssim A^{2\ell}.
\end{equation*}
In view of Lemma \ref{2.1} we also have
\begin{equation*}
\omega'(r)\gtrsim (r+1)^{-2s},\quad \omega(r)\varphi'(r)\gtrsim (r+1)^{-2s},\quad 2r^{-1}\omega(r)-\omega'(r)\gtrsim (r+1)^{-2s}.
\end{equation*}
Therefore (\ref{eq:2.9}) implies
\begin{equation}\label{eq:2.10}
\begin{split}
&\int_0^\infty(r+1)^{-2s} \langle r^{-2}(\Lambda + c_d)u, u \rangle_0dr+(\lambda^2+\tau^2)\int_0^\infty(r+1)^{-2s}\|u\|_0^2dr\\
&+\tau\int_0^\infty(r+1)^{-2s}\|D_ru\|_0^2dr\\
&\lesssim \tau^{-1}\int_0^\infty(r+1)^{2s}\left\|(\mathcal{P}_\varphi(\tau)\pm i\varepsilon)u\right\|_0^2dr+A^{2\ell}\varepsilon\int_0^\infty\left(\gamma\|D_ru\|_0^2+\gamma^{-1}\|u\|_0^2\right)dr,
\end{split}
\end{equation}
for every $\gamma>0$. 
On the other hand, in view of the identity
\begin{equation*}
{\rm Re}\,\int_0^\infty\langle 2i\varphi' D_ru,u\rangle_0 dr=\int_0^\infty \varphi''\|u\|_0^2dr,
\end{equation*}
we obtain
\begin{equation*}
\begin{split}
{\rm Re}\,\int_0^\infty\langle(\mathcal{P}_\varphi(\tau)\pm i\varepsilon)u,u\rangle_0 dr 
&=\int_0^\infty\|D_ru\|_0^2dr
+\int_0^\infty \langle r^{-2}(\Lambda+c_d)u,u\rangle_0 dr\\
&-\int_0^\infty(\lambda^2+\tau^{2}(\varphi')^2+\widetilde c_dr^{-2})\|u\|_0^2dr\\
&\ge \int_0^\infty\|D_ru\|_0^2dr-O(\lambda^2+\tau^{2})\int_0^\infty\|u\|_0^2dr,
\end{split}
 \end{equation*}
where $\widetilde c_d=1/4$ if $d=2$ and $\widetilde c_d=0$ if $d\ge 3$.
This implies
\begin{equation}\label{eq:2.11}
\int_0^\infty\|D_ru\|_0^2dr\lesssim (\lambda^2+\tau^{2})\int_0^\infty\|u\|_0^2dr
+\tau^{-2}\int_0^\infty\|(\mathcal{P}_\varphi(\tau)\pm i\varepsilon)u\|_0^2dr.
\end{equation}
By (\ref{eq:2.10}) with $\gamma=(\lambda+\tau)^{-1}$ and (\ref{eq:2.11}), 
\begin{equation}\label{eq:2.12}
\begin{split}
&\int_0^\infty(r+1)^{-2s}\langle r^{-2}(\Lambda + c_d)u, u \rangle_0dr+(\lambda+\tau)^2\int_0^\infty(r+1)^{-2s}\left\|u\right\|_0^2dr\\
&+\int_0^\infty(r+1)^{-2s}\|D_ru\|_0^2dr\\
&\lesssim\tau^{-1}\int_0^\infty(r+1)^{2s}\left\|(\mathcal{P}_\varphi(\tau)\pm i\varepsilon)u\right\|_0^2dr
+A^{2\ell}\varepsilon(\lambda+\tau)\int_0^\infty\|u\|_0^2dr.
\end{split}
\end{equation}
We will now show that (\ref{eq:2.12}) implies (\ref{eq:2.5}). Since 
\begin{equation*}
r^{-2}( \Lambda + c_d) = -\Delta + r^{-(d-1)/2} \partial^2_r r^{(d-1)/2} + \tilde{c}_d r^{-2},
\end{equation*} 
for any $0<\epsilon\ll  1$ independent of $\tau$ and $\lambda$,
\begin{equation*}
\begin{split}
\epsilon^2 \int_0^\infty &(r+1)^{-2s}  \langle r^{-2}(\Lambda + c_d) u, u \rangle_0 dr \\
&= \epsilon^2 \int_0^\infty(r+1)^{-2s} r^{d-1}  \langle r^{-2}(\Lambda + c_d) (1 - \psi) f, (1 - \psi) f \rangle_0 dr \\
&\ge \epsilon^2 \int_0^\infty(r+1)^{-2s} r^{d-1} \langle -\Delta (1 - \psi) f, (1 - \psi) f\rangle_0 dr  +   \epsilon^2 \int_0^\infty(r+1)^{-2s} \langle  \partial^2_r u, u \rangle_0 dr \\
& \gtrsim \epsilon^2 \langle  -\Delta(1 -\psi) f, (1- \psi) \langle x \rangle^{-2s} f \rangle_{L^2}  +   \epsilon^2 \int_0^\infty(r+1)^{-2s}  \langle  \partial^2_r u, u \rangle_0 dr \\
& \gtrsim \epsilon^2  \| \langle x \rangle^{-s}\nabla(1 -\psi) f \|^2_{L^2}  - O(\epsilon^2) \| \langle x \rangle^{-s} (1 - \psi) f \|^2_{L^2} +   \epsilon^2 \int_0^\infty(r+1)^{-2s}  \langle  \partial^2_r u, u \rangle_0 dr.  
\end{split}
\end{equation*}
Now integrate by parts
\begin{equation} \label{eq:2.13}
\begin{split}
\int_0^\infty(r+1)^{-2s}  \langle  \partial^2_r u, u \rangle_0 dr = &-\int_0^\infty(r+1)^{-2s}  \|D_r u \|^2_0 dr \\
 &+\int_0^\infty 2s (r+1)^{-2s-1}   \| u \|^2_0 dr.
\end{split}
\end{equation}
Therefore (\ref{eq:2.12}) implies
\begin{equation}\label{eq:2.14}
\begin{split}
&(\lambda+\tau)\|\langle x\rangle^{-s}(1-\psi)f\|_{L^2}
+\epsilon\|\langle x\rangle^{-s}\nabla((1-\psi)f)\|_{L^2}\\ 
&\lesssim \epsilon\|\langle x\rangle^{-s}(1-\psi)f\|_{L^2}+\tau^{-1/2}\|\langle x\rangle^{s}
(P_{0,\varphi}(\tau)-\lambda^2\pm i\varepsilon)(1-\psi)f\|_{L^2}\\
&+A^{\ell}\varepsilon^{1/2}(\lambda+\tau)^{1/2}\|f\|_{L^2}.
\end{split}
\end{equation}
If $\epsilon\ll\delta$ we can absorb the first term in the right-hand side of (\ref{eq:2.14}) by the first term in the left-hand side
and obtain (\ref{eq:2.5}). 

Finally, we explain why (\ref{eq:2.6}) holds when $d\ge 3$. That is, when $d \ge 3$, we may take $\psi \equiv 0$ and thus $u(r,w)=r^{(d-1)/2}f(rw)$. In this case, \eqref{eq:2.8} holds for all $x$ with $r = |x|\neq A$. From this, one shows we again have \eqref{eq:2.12}. The subsequent estimates follow as before. In particular, if $d \ge 3$, the Poincar\'e inequality (Lemma \ref{D.1}) ensures convergence of the right side of \eqref{eq:2.13}.

\eproof

In what follows we improve the estimate \eqref{eq:2.6} with $\psi\equiv 0$ when $d\ge 3$, showing that it still holds with the norm $\|\cdot\|_{L^2}$ in the first term in the right-hand side replaced 
by the smaller Sobolev norm $\|\cdot\|_{H_h^{-1}}$, where still $h = (\lambda + \tau)^{-1}$ and $\lambda \ge \delta$, $\tau \ge \tau_0$ are as in the statement of Proposition \ref{2.2}. To this end we again utilize the operator $P_{0, \varphi}(\tau) = -e^{\tau \varphi} \Delta e^{-\tau \varphi}$ and its generalization
\begin{equation*}
P_{0,\varphi_p}(\tau) \defeq \langle x\rangle^{p}P_{0,\varphi}(\tau)\langle x\rangle^{-p} =-\Delta+\mathcal{Q}_p, \quad \mathcal{Q}_p=2\tau\nabla\varphi_p\cdot\nabla-\tau^{2}|\nabla\varphi_p|^2+\tau\Delta\varphi_p, \qquad p \in \R,
\end{equation*}
where
\begin{equation*}
 \varphi_p(r)=\varphi(r)+\frac{p}{2}\tau^{-1}\log(r^2+1).
\end{equation*}
By integration by parts 
\begin{equation*}
\begin{gathered}
\langle \mathcal{Q}_p f, g \rangle_{L^2} = \langle f, \mathcal{Q}^*_p g \rangle_{L^2}, \qquad f,g \in H^1(\R^d),  \\
\mathcal{Q}^*_p \defeq -2\tau\nabla\varphi_p\cdot\nabla-\tau^{2}|\nabla\varphi_p|^2-\tau\Delta\varphi_p.
\end{gathered}
\end{equation*}

It is easy to see that
\begin{equation*}
|\nabla\varphi_p|\lesssim |\varphi'(r)|+\tau^{-1}\lesssim 1,
\end{equation*}
where the constants implicit in the estimate depend on $p$ but are independent of $\tau$. Furthermore, since
\begin{equation*}
\Delta\varphi_p=\varphi_p''(r)+\frac{d-1}{r}\varphi'_p(r),\quad r\neq 0, \, A,
\end{equation*}
we also have
\begin{equation*}
|\Delta\varphi_p|\lesssim 1+r^{-1},\quad r \neq 0,\,A.
\end{equation*}
 Therefore, by Poincar\'e's inequality (see \eqref{eq:D.1} in Appendix \ref{Poincare appendix}), $P_{0, \varphi_p}(\tau)$ maps boundedly $H^2(\R^d) \to L^2(\R^d)$. Additionally,
 \begin{equation*}
\begin{split}
\|h^{2}\mathcal{Q}_pf\|_{L^2}&\lesssim (h\tau+h^{2}\tau^{2})\|f\|_{H_h^1}+h^2\tau\|r^{-1}f\|_{L^2}\\
&\lesssim (h\tau+h^{2}\tau^{2})\|f\|_{H_h^1}+h^2\tau\|\nabla f\|_{L^2}\\
&\lesssim (h\tau+h^{2}\tau^{2})\|f\|_{H_h^1}, \qquad f \in H^1(\R^d),
\end{split}
 \end{equation*}
and similarly for $\mathcal{Q}^*_p$, where we have again used Poincar\'e's inequality. Hence
\begin{equation}\label{eq:2.15}
\|h^{2}\mathcal{Q}_p\|_{H_h^1\to L^2} \lesssim 
 h\tau+h^{2}\tau^{2}\lesssim 1.
\end{equation}

\begin{lemma} \label{2.3} 
Let $p\in\mathbb{R}$ and let $\delta$ and $\tau_0$ be as in the statement of Proposition \ref{2.2}.  Then, there exist $C > 0$ and $\theta_0 > 0$ independent of $\lambda$ and $\tau$, such that for all $\lambda \ge \delta$, $\tau \ge \tau_0$, and $\theta \ge \theta_0$,
\begin{equation}\label{eq:2.16}
\left\|\langle x\rangle^{p}\left(h^{2}P_{0,\varphi}(\tau)\pm i\theta^2\right)^{-1}\langle x\rangle^{-p}
\right\|_{H_h^{-1}\to H_h^1} \le C,
\end{equation}
\begin{equation}\label{eq:2.17}
\left\|\langle x\rangle^{p}\left(h^{2}P_{0,\varphi}(\tau)\pm i\theta^2\right)^{-1}\langle x\rangle^{-p}
\right\|_{H_h^{-1}\to L^2}\le C \theta^{-1},
\end{equation}
\begin{equation}\label{eq:2.18}
\left\|\langle x\rangle^{p}\left(h^{2}P_{0,\varphi}(\tau)\pm i\theta^2\right)^{-1}\langle x\rangle^{-p}
\right\|_{L^2\to H_h^1}\le C \theta^{-1},
\end{equation}
\begin{equation}\label{eq:2.19}
\left\|\langle x\rangle^{p}\left(h^{2}P_{0,\varphi}(\tau)\pm i\theta^2\right)^{-1}\langle x\rangle^{-p}
\right\|_{L^2\to L^2}\le C \theta^{-2},
\end{equation}
where $h=(\lambda+\tau)^{-1}$. 
\end{lemma}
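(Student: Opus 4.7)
The plan is to treat $h^{2} P_{0,\varphi}(\tau) \pm i\theta^{2}$ as a perturbation of the shifted free Laplacian $-h^{2}\Delta \pm i\theta^{2}$, exploiting the fact that $\mathcal{Q}_{p}$ has already been controlled as a map $H^{1}_{h}\to L^{2}$ via \eqref{eq:2.15}. First, since multiplication by $\langle x\rangle^{\pm p}$ is invertible and $P_{0,\varphi_{p}}(\tau) = \langle x\rangle^{p}P_{0,\varphi}(\tau)\langle x\rangle^{-p}$, one has the operator identity
\begin{equation*}
\langle x\rangle^{-p}\bigl(h^{2}P_{0,\varphi}(\tau) \pm i\theta^{2}\bigr)^{-1}\langle x\rangle^{p}
= \bigl(h^{2}P_{0,\varphi_{-p}}(\tau) \pm i\theta^{2}\bigr)^{-1}
= \bigl(-h^{2}\Delta + h^{2}\mathcal{Q}_{-p} \pm i\theta^{2}\bigr)^{-1},
\end{equation*}
so it suffices to prove \eqref{eq:2.16}--\eqref{eq:2.19} for the right-most operator. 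Since $p \in \R$ is arbitrary, the role of $\mathcal{Q}_{-p}$ below is purely cosmetic.

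Next, I would establish \eqref{eq:2.16}--\eqref{eq:2.19} for the free resolvent $R_{0} = (-h^{2}\Delta \pm i\theta^{2})^{-1}$ by a direct energy argument: for $g\in H^{-1}_{h}$ and $u = R_{0}g$, pairing $(-h^{2}\Delta \pm i\theta^{2})u=g$ with $u$ in $L^{2}$ and splitting into real and imaginary parts gives
\begin{equation*}
h^{2}\|\nabla u\|^{2}_{L^{2}} + \theta^{2}\|u\|^{2}_{L^{2}} \;\lesssim\; \|g\|_{H^{-1}_{h}}\|u\|_{H^{1}_{h}},
\end{equation*}
from which $\|R_{0}\|_{H^{-1}_{h}\to H^{1}_{h}} \le C$ and $\|R_{0}\|_{H^{-1}_{h}\to L^{2}} \le C\theta^{-1}$ fall out. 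Specializing to $g \in L^{2}\hookrightarrow H^{-1}_{h}$ then yields $\|R_{0}\|_{L^{2}\to H^{1}_{h}}\le C\theta^{-1}$ and $\|R_{0}\|_{L^{2}\to L^{2}}\le \theta^{-2}$. (These are of course also immediate from the Fourier multiplier representation $\widehat{R_{0}}(\xi)=(h^{2}|\xi|^{2}\pm i\theta^{2})^{-1}$.)

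Finally, combining the free bound $\|R_{0}\|_{L^{2}\to H^{1}_{h}}\le C\theta^{-1}$ with \eqref{eq:2.15} gives $\|R_{0}h^{2}\mathcal{Q}_{-p}\|_{H^{1}_{h}\to H^{1}_{h}}\le C\theta^{-1}$; choosing $\theta_{0}$ so that this is at most $\tfrac{1}{2}$, the operator $S \defeq (I+R_{0}h^{2}\mathcal{Q}_{-p})^{-1}$ is well-defined on $H^{1}_{h}$ by Neumann series with $\|S\|_{H^{1}_{h}\to H^{1}_{h}}\le 2$, and $(-h^{2}\Delta+h^{2}\mathcal{Q}_{-p}\pm i\theta^{2})^{-1} = SR_{0}$. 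Any $u=SR_{0}g$ then satisfies the identity $u = R_{0}g - R_{0}h^{2}\mathcal{Q}_{-p}u$. The two $H^{1}_{h}$-target bounds \eqref{eq:2.16} and \eqref{eq:2.18} are now immediate by composing $R_{0}\colon H^{-1}_{h}\to H^{1}_{h}$ or $R_{0}\colon L^{2}\to H^{1}_{h}$ with $S$. For the two $L^{2}$-target bounds \eqref{eq:2.17} and \eqref{eq:2.19}, I would feed the $H^{1}_{h}$-estimates on $u$ back into the identity and use
\begin{equation*}
\|R_{0}h^{2}\mathcal{Q}_{-p}u\|_{L^{2}} \;\le\; \|R_{0}\|_{L^{2}\to L^{2}}\,\|h^{2}\mathcal{Q}_{-p}\|_{H^{1}_{h}\to L^{2}}\,\|u\|_{H^{1}_{h}} \;\lesssim\; \theta^{-2}\|u\|_{H^{1}_{h}},
\end{equation*}
which produces a correction subleading by a factor $\theta^{-1}$ relative to $\|R_{0}g\|_{L^{2}}$. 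The main point to be careful about is that the semiclassical smallness $\theta^{-1}$ delivered by $R_{0}\colon L^{2}\to H^{1}_{h}$ exactly compensates the $O(1)$ norm of $h^{2}\mathcal{Q}_{-p}\colon H^{1}_{h}\to L^{2}$ from \eqref{eq:2.15}; there is no genuine obstacle beyond this book-keeping, provided $\theta_{0}$ is taken large enough depending only on the implicit constant in \eqref{eq:2.15}.
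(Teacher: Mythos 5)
Your overall route is the same as the paper's: reduce to the conjugated operator $-h^{2}\Delta+h^{2}\mathcal{Q}_{\mp p}\pm i\theta^{2}$, prove the four bounds for the free resolvent $(-h^{2}\Delta\pm i\theta^{2})^{-1}$ in the semiclassical spaces, and then perturb by a Neumann series using \eqref{eq:2.15}; the quantitative bookkeeping for \eqref{eq:2.16}--\eqref{eq:2.19} (feeding the $H^{1}_{h}$ bounds back through $u=R_{0}g-R_{0}h^{2}\mathcal{Q}_{\mp p}u$) is fine and matches \eqref{eq:2.20}--\eqref{eq:2.21}.

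The one genuine gap is the very first step, which you dismiss as ``purely cosmetic'': the identity
$\langle x\rangle^{-p}\bigl(h^{2}P_{0,\varphi}(\tau)\pm i\theta^{2}\bigr)^{-1}\langle x\rangle^{p}=\bigl(-h^{2}\Delta+h^{2}\mathcal{Q}_{-p}\pm i\theta^{2}\bigr)^{-1}$
does not follow merely from invertibility of the multiplication operators. For $p\neq 0$ one of the two weights is unbounded, so the left-hand side is a priori only defined on a dense set such as $C_{0}^{\infty}$, and to identify it with the bounded resolvent on the right you would need to know that $(h^{2}P_{0,\varphi}(\tau)\pm i\theta^{2})^{-1}$ maps weighted data to functions with the corresponding weighted decay --- which is essentially the content of the lemma, not an input to it. Concretely, if $v=(-h^{2}\Delta+h^{2}\mathcal{Q}_{-p}\pm i\theta^{2})^{-1}f$ with $f\in C_{0}^{\infty}$, you cannot simply apply $(h^{2}P_{0,\varphi}(\tau)\pm i\theta^{2})^{-1}$ to $\langle x\rangle^{p}(h^{2}P_{0,\varphi_{-p}}(\tau)\pm i\theta^{2})v$, because it is not known that $\langle x\rangle^{p}v$ lies in $L^{2}$, let alone in the domain $H^{2}$. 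This is exactly why the paper proves the corresponding identity \eqref{eq:2.22} by a duality and density argument: it pairs against test functions, approximates $(-h^{2}\Delta+h^{2}\mathcal{Q}_{p}\pm i\theta^{2})^{-1}f$ in $H^{2}$ by compactly supported $u_{k}$ (so that commuting the weight through the operator is harmless), and moves the operator onto the test function via the adjoint $\mathcal{Q}_{0}^{*}$. Your proposal needs this (or an equivalent) argument inserted; once it is, the rest goes through as you describe. A minor additional point: you invoke the identity before having established that either resolvent exists, whereas the existence of $(-h^{2}\Delta+h^{2}\mathcal{Q}_{\mp p}\pm i\theta^{2})^{-1}$ (and the $p=0$ case giving $(h^{2}P_{0,\varphi}(\tau)\pm i\theta^{2})^{-1}$ itself) comes from the Neumann series, so the order of the steps should be reversed.
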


{\it Proof.} Recall that $\|f\|_{H_h^{s}}\sim\|(1-h^{2}\Delta)^{s/2}f\|_{L^2}$, $s=-1,1$.
Using this it is easy to see that the above bounds hold for $p=0$ and $P_{0,\varphi}(\tau)$ replaced by $-\Delta$.

To prove \eqref{eq:2.16} through \eqref{eq:2.19}, begin by using \eqref{eq:2.15} in combination with \eqref{eq:2.16} in the case $p =0$ and $P_{0, \varphi}(\tau)$ replaced by $-\Delta$. We get that for $\theta \gg 1$, 
\begin{equation*}
\| h^2 \mathcal{Q}_p (-h^2 \Delta \pm i \theta^2)^{-1}\|_{L^2 \to L^2} \le \| h^2 \mathcal{Q}_p\|_{H^1_h \to L^2} \| (-h^2 \Delta \pm i \theta^2)^{-1}\|_{L^2 \to H^1_h} \lesssim \theta^{-1} \le 1/2,
\end{equation*} 
whence 
\begin{equation*}
I + h^2 \mathcal{Q}_p(-h^2 \Delta \pm i \theta^2)^{-1}:L^2(\R^d) \to L^2(\R^d)
\end{equation*} 
 is invertible by a Neumann series. 
It is then checked by direct computation that the inverse of  
\begin{equation*}
h^2 P_{0, \varphi_p}( \tau) \pm i \theta^2 = -h^2 \Delta + h^2 \mathcal{Q}_p \pm i\theta^2 : H^2(\R^d) \to L^2(\R^d)
\end{equation*}
is 
\begin{equation} \label{eq:2.20}
(-h^2 \Delta + h^2 \mathcal{Q}_p \pm i\theta^2)^{-1} = (-h^2 \Delta \pm i \theta^2)^{-1} (I + h^2 \mathcal{Q}_p (-h^2 \Delta \pm i \theta^2)^{-1})^{-1}, \qquad \theta \gg 1.
\end{equation}
From this we also conclude the identity
\begin{equation} \label{eq:2.21}
\begin{split}
&\left(-h^{2}\Delta+h^{2}\mathcal{Q}_p\pm i\theta^2\right)^{-1}-
\left(-h^{2}\Delta\pm i\theta^2\right)^{-1}\\
&=-\left(-h^{2}\Delta\pm i\theta^2\right)^{-1}h^2\mathcal{Q}_p
\left(-h^{2}\Delta+h^{2}\mathcal{Q}_p\pm i\theta^2\right)^{-1}.
\end{split}
 \end{equation}
 Using this strategy we can also establish bounded invertibility of 
 \begin{equation*}
 -h^2 \Delta + h^2 \mathcal{Q}^*_p \pm i\theta^2 : H^2(\R^d) \to L^2(\R^d)
 \end{equation*}
  for $\theta$ large.
We further show that for $\theta$ big enough
\begin{equation} \label{eq:2.22}
\langle x\rangle^{p}\left(h^{2}P_{0,\varphi}(\tau)\pm i\theta^2\right)^{-1}\langle x\rangle^{-p}  = (-h^2 \Delta + h^2 \mathcal{Q}_p \pm i \theta^2)^{-1}, \qquad p \in \R,
\end{equation}
where initially the left side is interpreted as an operator sending $C^\infty_0(\R^d)$ to $H^2_{\text{loc}}(\R^d)$. At first, let $p \ge 0$. Suppose $f, \, g \in C^\infty_0(\R^d)$ and choose a sequence $u_k \in C^\infty_0(\R^d)$ converging to $(-h^2 \Delta + h^2 \mathcal{Q}_{p} \pm i \theta^2)^{-1}f$ in the $H^2(\R^d)$-norm. Then
\begin{equation*}
\begin{split}
\int_{\R^d}  &\overline{g} \left(h^{2}P_{0,\varphi}(\tau)\pm i\theta^2\right)^{-1}\langle x\rangle^{-p} f \\
& = \int_{\R^d}  \overline{g} \left(-h^2 \Delta + h^2 \mathcal{Q}_0 \pm i\theta^2\right)^{-1}\langle x\rangle^{-p} f  \\
&= \lim_{k \to \infty} \int_{\R^d} \left[ \left(-h^2 \Delta + h^2 \mathcal{Q}^*_0 \mp i\theta^2\right)^{-1}   \overline{g}\right]\langle x\rangle^{-p} (-h^2 \Delta + h^2 \mathcal{Q}_{p} \pm i \theta^2)u_k \\
&= \lim_{k \to \infty} \int_{\R^d} \left[ \left(-h^2 \Delta + h^2 \mathcal{Q}^*_0 \mp i\theta^2\right)^{-1}   \overline{g}  \right] (-h^2 \Delta + h^2 \mathcal{Q}_0 \pm i \theta^2) \langle x\rangle^{-p}  u_k \\
& = \int_{\R^d}  \overline{g}  \langle x\rangle^{-p}   (-h^2 \Delta + h^2 \mathcal{Q}_p \pm i \theta^2)^{-1}f,
\end{split}
\end{equation*}
This confirms \eqref{eq:2.22} for $p \ge 0$. To see \eqref{eq:2.22} for $p < 0$, we give a calculation analogous to the previous one but replace $P_{0,\varphi}(\tau)$ with its adjoint $-h^2 \Delta + h^2 \mathcal{Q}^*_0$. This yields
\begin{equation*}
\langle x\rangle^{p}\left(-h^2 \Delta + h^2 \mathcal{Q}^*_0 \mp i\theta^2\right)^{-1}\langle x\rangle^{-p}  = (-h^2 \Delta + h^2 \mathcal{Q}^*_p \mp i \theta^2)^{-1}
\end{equation*}
 for $p \ge 0$, which implies \eqref{eq:2.22} for $p < 0$ by duality.

Now we are in a position to show \eqref{eq:2.16}. By \eqref{eq:2.15}, \eqref{eq:2.21}, and since \eqref{eq:2.22} holds for any $p \in \R$:
\begin{equation*}
\begin{split}
& \left \| \langle x\rangle^{p}\left(h^{2}P_{0,\varphi}(\tau)\pm i\theta^2\right)^{-1}\langle x\rangle^{-p}\right\|_{H_h^{-1}\to H_h^1} = \left\|\left(-h^2 \Delta + h^2 \mathcal{Q}_p \pm i \theta^2 \right)^{-1}
\right\|_{H_h^{-1}\to H_h^1}\\
&\le \left\|\left(-h^{2}
\Delta\pm i\theta^2\right)^{-1}\right\|_{H_h^{-1}\to H_h^1}\\
& +\left\|\left(-h^{2}\Delta\pm i\theta^2\right)^{-1}
\right\|_{L^2\to H_h^1}\|h^{2}\mathcal{Q}_p\|_{H_h^1\to L^2}
\left\|\left(-h^2 \Delta + h^2 \mathcal{Q}_p \pm i \theta^2 \right)^{-1}
\right\|_{H_h^{-1}\to H_h^1}\\
&\lesssim 1+\theta^{-1}\left\|\left(-h^2 \Delta + h^2 \mathcal{Q}_p \pm i \theta^2 \right)^{-1}
\right\|_{H_h^{-1}\to H_h^1}
\end{split}
 \end{equation*}
which implies \eqref{eq:2.16} if $\theta$ is taken large enough. Clearly, \eqref{eq:2.17} can be obtained in the same way. On the other hand, we obtain \eqref{eq:2.18} and \eqref{eq:2.19} from \eqref{eq:2.20} and \eqref{eq:2.22}. 
\eproof

We derive from Proposition \ref{2.2} and Lemma \ref{2.3} the following.

\begin{prop} \label{2.4} 
Let $d\ge 3$. 
Given any $\delta>0$, there are positive constants $C$, $A_0$ and $\tau_0$ such that if $A=A_0\tau^{2/(1+2\ell-2s)}$, for all $\tau\ge \tau_0$, $\lambda\ge\delta$, 
$0<\varepsilon\le 1$, and for all functions
$f\in H^1(\mathbb{R}^d)$ satisfying 
\begin{equation*}
\langle x\rangle^{s}(P_{0,\varphi}(\tau)-\lambda^2\pm i\varepsilon)f\in H^{-1}(\mathbb{R}^d),
\end{equation*}
 we have 
 \begin{equation}\label{eq:2.23}
\|\langle x\rangle^{-s}f\|_{H^1_h}\le Ch\tau^{-1/2}\|\langle x\rangle^{s}
(P_{0,\varphi}(\tau)-\lambda^2\pm i\varepsilon)f\|_{H_h^{-1}}+CA^{\ell}(\varepsilon h)^{1/2}\|f\|_{L^2},
\end{equation}
where $h=(\lambda+\tau)^{-1}$.
\end{prop}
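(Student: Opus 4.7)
The plan is to deduce \eqref{eq:2.23} from \eqref{eq:2.6} by splitting $f$ into a regular part handled by Lemma \ref{2.3} and a remainder handled by Proposition \ref{2.2}. After a density reduction to $f\in H^2(\R^d)$, writing $F\defeq(P_{0,\varphi}(\tau)-\lambda^2\pm i\varepsilon)f$ so that $\langle x\rangle^s F\in H^{-1}$ by hypothesis, I would fix $\theta\ge\theta_0$ as in Lemma \ref{2.3} and introduce
\[
\tilde u \defeq \bigl(h^2 P_{0,\varphi}(\tau)\pm i\theta^2\bigr)^{-1}(h^2 F),\qquad v \defeq f-\tilde u.
\]
Using $(h^2 P_{0,\varphi}(\tau)\pm i\theta^2)\tilde u = h^2 F$, a direct calculation gives
\[
(P_{0,\varphi}(\tau)-\lambda^2\pm i\varepsilon)v = \bigl(\lambda^2\mp i\varepsilon\pm i\theta^2/h^2\bigr)\tilde u,
\]
and the crucial observation is that, while $F$ is only in weighted $H^{-1}$, the new right-hand side belongs to weighted $L^2$, which makes Proposition \ref{2.2} applicable to $v$.

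For the regular part $\tilde u$, I would invoke Lemma \ref{2.3} with $p=-s$: writing $h^2 F=\langle x\rangle^{-s}(h^2\langle x\rangle^s F)$ and using that multiplication by $\langle x\rangle^{-2s}$ is bounded on $H^1_h$, \eqref{eq:2.16} yields
\[
\|\langle x\rangle^{-s}\tilde u\|_{H^1_h} \le Ch^2 \|\langle x\rangle^s F\|_{H^{-1}_h} \le Ch\tau^{-1/2}\|\langle x\rangle^s F\|_{H^{-1}_h},
\]
since $h\le 1/\tau\le\tau^{-1/2}$ for $\tau\ge 1$. Likewise \eqref{eq:2.17} gives $\|\langle x\rangle^s\tilde u\|_{L^2}\le C\theta^{-1}h^2\|\langle x\rangle^s F\|_{H^{-1}_h}$, so that the weighted $L^2$ norm of the new source is bounded by $(\lambda^2+\theta^2/h^2+\varepsilon)\|\langle x\rangle^s\tilde u\|_{L^2}\lesssim \|\langle x\rangle^s F\|_{H^{-1}_h}$, using $\lambda h,\,\varepsilon h^2\le 1$.

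Applying \eqref{eq:2.6} to $v$ then delivers $\|\langle x\rangle^{-s}v\|_{H^1_h}\le Ch\tau^{-1/2}\|\langle x\rangle^s F\|_{H^{-1}_h}+CA^\ell(\varepsilon h)^{1/2}\|v\|_{L^2}$, and combining the triangle inequality $\|\langle x\rangle^{-s}f\|_{H^1_h}\le \|\langle x\rangle^{-s}v\|_{H^1_h}+\|\langle x\rangle^{-s}\tilde u\|_{H^1_h}$ with $\|v\|_{L^2}\le\|f\|_{L^2}+\|\tilde u\|_{L^2}$ yields \eqref{eq:2.23}. The hardest step will be verifying that the residual pieces generated by this procedure--most notably the contribution $CA^\ell(\varepsilon h)^{1/2}\|\tilde u\|_{L^2}$ arising from the Carleman error term in \eqref{eq:2.6}--can be absorbed into either the main term $Ch\tau^{-1/2}\|\langle x\rangle^s F\|_{H^{-1}_h}$ or the Carleman error term $CA^\ell(\varepsilon h)^{1/2}\|f\|_{L^2}$; this requires careful bookkeeping of the exponents generated by $A=A_0\tau^{2/(1+2\ell-2s)}$, the Sobolev weight $s$, and the constraints $0<s-\tfrac{1}{2}<\ell<\tfrac{2s}{3}$.
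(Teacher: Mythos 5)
Your decomposition is the paper's own argument in a light disguise. Writing $h^2F=(h^2P_{0,\varphi}(\tau)\pm i\theta^2)f-(\pm i\theta^2+h^2\lambda^2\mp ih^2\varepsilon)f$, your remainder is exactly $v=(\pm i\theta^2+h^2\lambda^2\mp ih^2\varepsilon)\,(h^2P_{0,\varphi}(\tau)\pm i\theta^2)^{-1}f$, i.e.\ a harmless scalar multiple (modulus comparable to $\theta^2$) of the function $g=(h^2P_{0,\varphi}(\tau)\mp i\theta^2)^{-1}f$ to which the paper applies \eqref{eq:2.6}, while your $\tilde u$ is the paper's correction term $h^2(h^2P_{0,\varphi}(\tau)\mp i\theta^2)^{-1}(P_{0,\varphi}(\tau)-\lambda^2\pm i\varepsilon)f$, handled as you do via Lemma \ref{2.3} with a weight shift and $h^2\le h\tau^{-1/2}$. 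Your estimates for $\tilde u$ and for the weighted $L^2$ norm of the new source $(\lambda^2\mp i\varepsilon\pm i\theta^2/h^2)\tilde u$ are correct.

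The step you flag as the hardest closes in one line, and not by bookkeeping of the exponents of $A$, $s$, $\ell$: by \eqref{eq:2.19} with $p=0$, $\|v\|_{L^2}\le|\pm i\theta^2+h^2\lambda^2\mp ih^2\varepsilon|\,\|(h^2P_{0,\varphi}(\tau)\pm i\theta^2)^{-1}\|_{L^2\to L^2}\,\|f\|_{L^2}\lesssim\|f\|_{L^2}$ (equivalently $\|\tilde u\|_{L^2}\le\|f\|_{L^2}+\|v\|_{L^2}\lesssim\|f\|_{L^2}$), so the Carleman error produced by \eqref{eq:2.6} applied to $v$ is directly $\lesssim A^{\ell}(\varepsilon h)^{1/2}\|f\|_{L^2}$, the error term of \eqref{eq:2.23}; no $A$-dependence enters. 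Beware that the other absorption you contemplate fails: bounding $\|\tilde u\|_{L^2}\lesssim h^2\|\langle x\rangle^{s}F\|_{H^{-1}_h}$ and pushing $A^{\ell}(\varepsilon h)^{1/2}\|\tilde u\|_{L^2}$ into the main term would require $A^{\ell}(\varepsilon h\tau)^{1/2}\lesssim 1$, which is false since $A^{\ell}=A_0^{\ell}\tau^{2\ell/(1+2\ell-2s)}\to\infty$. Finally, the identity $v=(\pm i\theta^2+h^2\lambda^2\mp ih^2\varepsilon)(h^2P_{0,\varphi}(\tau)\pm i\theta^2)^{-1}f$ also lets you avoid the $H^2$ density reduction (whose justification you would otherwise owe): for $f\in L^2$ this $v$ lies in $H^2$, and the hypothesis of Proposition \ref{2.2} for $v$ is verified exactly as you checked the source, namely $\langle x\rangle^{s}(P_{0,\varphi}(\tau)-\lambda^2\pm i\varepsilon)v=(\lambda^2\mp i\varepsilon\pm i\theta^2/h^2)\langle x\rangle^{s}\tilde u\in L^2$ by \eqref{eq:2.17} applied to $h^2\langle x\rangle^{s}F\in H^{-1}_h$; this is precisely how the paper proceeds with $g$.
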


{\it Proof.} We use the identity
\begin{equation*}
\begin{split}
f&=h^{2}\left(\mp i(\varepsilon+(\theta/h)^2)+\lambda^2\right)\left(h^{2}P_{0,\varphi}(\tau)\mp i\theta^2\right)^{-1}f\\
&+h^{2}\left(h^{2}P_{0,\varphi}(\tau)\mp i\theta^2\right)^{-1}(P_{0,\varphi}(\tau)-\lambda^2\pm i\varepsilon)f.
\end{split}
 \end{equation*}
 Set
\begin{equation*}
g=\left(h^{2}P_{0,\varphi}(\tau)\mp i\theta^2\right)^{-1}f.
\end{equation*}
By Lemma \ref{2.3}, for $\theta$ large enough,
\begin{equation*}
\begin{split}
\|\langle x\rangle^{-s}f\|_{L^2}\lesssim &
\left\|\langle x\rangle^{-s}g\right\|_{L^2}+h^2\left\|\left(h^{2}P_{0,\varphi}(\tau)\mp i\theta^2\right)^{-1}\right\|_{H^{-1}_h\to L^2}
\left\|(P_{0,\varphi}(\tau)-\lambda^2\pm i\varepsilon)f\right\|_{H^{-1}_h}\\
& \lesssim 
\left\|\langle x\rangle^{-s}g\right\|_{L^2}+h^2
\left\|(P_{0,\varphi}(\tau)-\lambda^2\pm i\varepsilon)f\right\|_{H^{-1}_h}.
\end{split}
 \end{equation*}
Here and later in the proof the implicit constants depend on $\theta$ but are independent of $\lambda$ and $\tau$. We now apply (\ref{eq:2.6}) to the function $g$. Note that $g$ satisfies the required hypothesis of Proposition \ref{2.2} because by Lemma \ref{2.3},
\begin{equation*}
\begin{split}
\langle x \rangle^s (&P_{0, \varphi}(\tau) - \lambda^2 \pm i \varepsilon)g \\
&=\langle x \rangle^s (P_{0, \varphi}(\tau) - \lambda^2 \pm i \varepsilon) \left(h^{2}P_{0,\varphi}(\tau)\mp i\theta^2\right)^{-1}f \\
&=\left(\langle x \rangle^s \left(h^{2}P_{0,\varphi}(\tau)\mp i\theta^2\right)^{-1} \langle x \rangle^{-s} \right) \langle x \rangle^{s} (P_{0, \varphi}(\tau) - \lambda^2 \pm i \varepsilon) f \in L^2(\R^d). 
\end{split}
\end{equation*}
Therefore, combining \eqref{eq:2.6} with Lemma \ref{2.3},
\begin{equation*}
\begin{split}
&\|\langle x\rangle^{-s}g\|_{H^1_h}\lesssim h\tau^{-1/2}\|\langle x\rangle^{s}
(P_{0,\varphi}(\tau)-\lambda^2\pm i\varepsilon)g\|_{L^2}+A^{\ell}(\varepsilon h)^{1/2}\|g\|_{L^2}\\
 &\lesssim h\tau^{-1/2}\left\|\langle x\rangle^{s}\left(h^{2}P_{0,\varphi}(\tau)\mp i\theta^2\right)^{-1}
\langle x\rangle^{-s}\right\|_{H^{-1}_h\to L^2}
\|\langle x\rangle^{s}(P_{0,\varphi}(\tau)-\lambda^2\pm i\varepsilon)f\|_{H_h^{-1}}\\
&+A^{\ell}(\varepsilon h)^{1/2}
\left\|\left(h^{2}P_{0,\varphi}(\tau)\mp i\theta^2\right)^{-1}\right\|_{L^2\to L^2}\|f\|_{L^2}\\
 &\lesssim h\tau^{-1/2}
\|\langle x\rangle^{s}(P_{0,\varphi}(\tau)-\lambda^2\pm i\varepsilon)f\|_{H_h^{-1}}+A^{\ell}(\varepsilon h)^{1/2}\|f\|_{L^2}.
\end{split}
 \end{equation*}
Thus we obtain
\begin{equation*}
\|\langle x\rangle^{-s}f\|_{H^1_h}\lesssim h\left(h+\tau^{-1/2}\right)
\|\langle x\rangle^{s}(P_{0,\varphi}(\tau)-\lambda^2\pm i\varepsilon)f\|_{H_h^{-1}}+A^{\ell}(\varepsilon h)^{1/2}\|f\|_{L^2},
\end{equation*}
which implies \eqref{eq:2.23} since $h<\tau^{-1}$.
\eproof

\section{Resolvent bounds for the magnetic Schr\"odinger operator} \label{resolv bds mag Schro section}

Consider in $\mathbb{R}^d$, $d\ge 3$, the operator
\begin{equation*}
P=(i\nabla+b(x))^2+V(x),
\end{equation*}
where the electric potential $V\in L^\infty(\mathbb{R}^d,\mathbb{R})$
and the magnetic potential 
$b\in L^\infty(\mathbb{R}^d,\mathbb{R}^d)$ satisfy
\begin{equation}\label{eq:3.1}
|V(x)|+|b(x)|\le C\langle x\rangle^{-\rho}, \quad C>0,\,\rho>1.
\end{equation}
In this section we prove weighted resolvent bounds for the self-adjoint realization of the above operator (which
again will be denoted by $P$) on the Hilbert space $L^2(\R^d)$. 
We have

\begin{Theorem} \label{3.1}
Assume the condition \eqref{eq:3.1} fulfilled. Then, given any $s>1/2$ and $\delta>0$ there is a constant $C>0$ such that 
\begin{equation}\label{eq:3.2}
\left\|\langle x\rangle^{-s}\partial_x^\alpha(P-\lambda^2\pm i\varepsilon)^{-1}\partial_x^\beta\langle x\rangle^{-s}\right\|
\le C\lambda^{|\alpha|+|\beta|-1},\quad\lambda\ge\delta,\,0<\varepsilon<1,
\end{equation}
 where 
$\alpha$ and $\beta$ are multi-indices such that $|\alpha|\le 1$ and $|\beta|\le 1$.  
\end{Theorem}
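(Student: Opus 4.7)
The plan is to deduce Theorem \ref{3.1} from Proposition \ref{2.4} by writing $P = -\Delta + W$, where $W = -ib\cdot\nabla - i\nabla\cdot(b\,\cdot\,) + |b|^2 + V$, and absorbing $W$ into the Carleman bound \eqref{eq:2.23} via the smallness factor $\tau^{-1/2}$. Since an estimate for some $s > 1/2$ automatically implies the same estimate for all larger $s$, I restrict to $s \in (1/2, \rho/2)$, a nonempty interval because $\rho > 1$. I match the parameter $s$ in the construction of $\omega,\varphi$ in Section \ref{Carleman est section} to this value, and fix $\tau = \tau_0 \gg 1$ independent of $\lambda$; in particular $h = (\lambda+\tau_0)^{-1} \sim \lambda^{-1}$ uniformly on $[\delta,\infty)$.

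Given $\phi \in L^2(\R^d)$, set $f \defeq (P - \lambda^2 \pm i\varepsilon)^{-1}\phi$ and $u \defeq e^{\tau_0\varphi}f$. Because $\varphi$ is bounded, $e^{\tau_0\varphi}$ and its inverse are bounded multipliers on $L^2$ and on weighted $H^1_h$, so the norms of $u$ and $f$ are equivalent up to $\tau_0$-dependent constants. The conjugated equation reads
\begin{equation*}
(P_{0,\varphi}(\tau_0) - \lambda^2 \pm i\varepsilon) u = e^{\tau_0\varphi}\phi - \widetilde{W}(\tau_0)u, \qquad \widetilde{W}(\tau_0) \defeq e^{\tau_0\varphi}We^{-\tau_0\varphi},
\end{equation*}
where $\widetilde{W}(\tau_0)$ is still a first-order operator with coefficients bounded pointwise by $C_{\tau_0}\langle x\rangle^{-\rho}$ (the drift terms $\tau_0 b\cdot\nabla\varphi$ introduced by conjugation are controlled because $|\nabla\varphi|$ is uniformly bounded). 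Applying \eqref{eq:2.23} to $u$ yields
\begin{equation*}
\|\langle x\rangle^{-s}u\|_{H^1_h} \lesssim h\tau_0^{-1/2}\bigl(\|\langle x\rangle^s e^{\tau_0\varphi}\phi\|_{H^{-1}_h} + \|\langle x\rangle^s\widetilde{W}(\tau_0)u\|_{H^{-1}_h}\bigr) + A^\ell(\varepsilon h)^{1/2}\|u\|_{L^2}.
\end{equation*}

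The key technical point is the bound $\|\langle x\rangle^s\widetilde{W}(\tau_0)u\|_{H^{-1}_h} \lesssim h^{-1}\|\langle x\rangle^{-s}u\|_{H^1_h}$, uniformly in $\lambda$. The zeroth-order part is estimated pointwise using $\langle x\rangle^{2s-\rho} \le 1$. For the first-order terms such as $b\cdot\nabla u$, I pair against a test function $\chi \in H^1_h$ and estimate $|\langle b\cdot\nabla u, \langle x\rangle^s\chi\rangle| \le \|b\langle x\rangle^{2s}\|_{L^\infty}\|\langle x\rangle^{-s}\nabla u\|_{L^2}\|\chi\|_{L^2}$, then use the commutator bound $\|\langle x\rangle^{-s}\nabla u\|_{L^2} \lesssim h^{-1}\|\langle x\rangle^{-s}u\|_{H^1_h}$, which follows by writing $\langle x\rangle^{-s}\nabla u = \nabla(\langle x\rangle^{-s}u) - (\nabla\langle x\rangle^{-s})u$. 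The distributional piece $\nabla\cdot(bu)$ is handled similarly after integration by parts against $\langle x\rangle^s\chi$. Multiplying by $h\tau_0^{-1/2}$ produces a term $C\tau_0^{-1/2}\|\langle x\rangle^{-s}u\|_{H^1_h}$, absorbed into the left-hand side for $\tau_0$ large. Reverting to $f$, I obtain
\begin{equation*}
\|\langle x\rangle^{-s}f\|_{H^1_h} \lesssim h\|\langle x\rangle^s\phi\|_{H^{-1}_h} + (\varepsilon h)^{1/2}\|f\|_{L^2}.
\end{equation*}

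It remains to remove the $\varepsilon$-dependent term and to convert the $H^1_h/H^{-1}_h$ estimate into \eqref{eq:3.2}. Taking $\phi = \partial_x^\beta\langle x\rangle^{-s}\psi$ with $|\beta| \le 1$, a direct computation gives $\|\langle x\rangle^s\phi\|_{H^{-1}_h} \lesssim h^{-|\beta|}\|\psi\|_{L^2}$. Pairing $(P - \lambda^2 \pm i\varepsilon)f = \phi$ with $f$ and extracting imaginary parts yields $\varepsilon\|f\|_{L^2}^2 \le \|\psi\|_{L^2}\|\langle x\rangle^{-s}\partial_x^\beta f\|_{L^2}$, and AM--GM applied to $(\varepsilon h)^{1/2}\|f\|_{L^2}$ (with a weight proportional to $h$ when $|\beta| = 1$) allows absorption of a small multiple of $\|\langle x\rangle^{-s}f\|_{H^1_h}$ into the left, giving an estimate independent of $\varepsilon$. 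Unpacking $\|\langle x\rangle^{-s}f\|_{H^1_h}^2 = \|\langle x\rangle^{-s}f\|_{L^2}^2 + h^2\|\nabla(\langle x\rangle^{-s}f)\|_{L^2}^2$ and using $h \sim \lambda^{-1}$ then produces all the bounds on $\|\langle x\rangle^{-s}\partial_x^\alpha f\|_{L^2}$ with $|\alpha| \le 1$ required by \eqref{eq:3.2}. The main obstacle is the absorption of the first-order perturbation $\widetilde{W}(\tau_0)$: a naive operator bound costs an $h^{-1}$ that precisely cancels the $h$ in front, so the $\tau_0^{-1/2}$ smallness supplied by the Carleman estimate is what makes Neumann-type absorption succeed uniformly down to $\lambda = \delta$, rather than only for $\lambda$ large.
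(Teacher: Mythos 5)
Your proposal is correct and follows essentially the same route as the paper's proof: conjugate by $e^{\tau\varphi}$, apply the Carleman estimate \eqref{eq:2.23} with $s\in(1/2,\rho/2)$, bound the conjugated first-order perturbation in $H^{-1}_h$ with an $h^{-1}$ loss compensated by the $\tau^{-1/2}$ factor (this is exactly the paper's \eqref{eq:3.3}--\eqref{eq:3.4}), remove the $\varepsilon$-term via the imaginary part of the pairing with $f$ as in \eqref{eq:3.5}, and convert the resulting $H^1_h$/$H^{-1}_h$ bound \eqref{eq:3.6} into \eqref{eq:3.2} by duality. The only deviations are cosmetic (a sign slip in writing $W$, and specializing $g$ to $\partial_x^\beta\langle x\rangle^{-s}\psi$ rather than arguing the equivalence of \eqref{eq:3.6} with \eqref{eq:3.2} for general data), so no changes are needed.
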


{\it Proof.} We prove \eqref{eq:3.2} using the Carleman estimate \eqref{eq:2.23}.
 We keep the same notations as in the previous section.  
Clearly, it suffices to prove \eqref{eq:3.2} for $0<s-\frac{1}{2}\ll 1$, since this would imply the estimate
for all $s>\frac{1}{2}$. 
In Appendix \ref{self-adjointness mag Schro appendix} we show that, in the sense of distributions on $\R^d$, 
the operator $P$ acts on $u$ in the domain $D(P) \subseteq H^1(\R^d)$ by
\begin{equation*}
Pu =-\Delta u+i\nabla\cdot (bu)+ib\cdot\nabla u+\widetilde Vu,
\end{equation*}
where $\widetilde V=V+|b|^2$. Here, $\nabla \cdot (bu)$ is defined distributionally by 
$(\nabla \cdot (bu), v) \defeq -(u, b \cdot \nabla  v)$, where $(\cdot, \cdot)$ denotes distributional pairing. 
We note that  $u \mapsto \nabla \cdot (bu)$ is a bounded mapping from $L^2(\R^d)$ to $H_h^{-1}(\R^d)$. Given $g \in C^\infty_0(\mathbb{R}^d)$, set
\begin{equation*}
f=(P-\lambda^2\pm i\varepsilon)^{-1}g \in D(P) \cap H^1(\R^d),\qquad f_1=e^{\tau\varphi}f \in H^1(\R^d).
\end{equation*}
Both $P$ and $P_{0, \varphi} = -e^{\tau \varphi} \Delta e^{-\tau \varphi}:H_h^1(\R^d) \to H_h^{-1}(\R^d)$ are bounded. As members of $H^{-1}_h(\R^d)$,
\begin{equation*}
\begin{split}
P_{0, \varphi} f_1 - e^{\tau \varphi} Pf &= -e^{\tau \varphi} (i \nabla \cdot (bf) + ib \cdot \nabla f + \tilde{V} f) \\
&= i\nabla\cdot (bf_1) -ib\cdot\nabla f_1 -\widetilde V f_1 +2i\tau\nabla\varphi\cdot b f_1.
\end{split}
\end{equation*}
By the definition of $\varphi$, we have $ \nabla \varphi = O(\langle r \rangle^{-2s})$, and if we take $s > \tfrac{1}{2}$ 
small enough so that $2s < \rho$ with $\rho$ as in \eqref{3.1}, then
\begin{equation}\label{eq:3.3}
\left\|\langle x\rangle^{s}\left(P_{0, \varphi} f_1 - e^{\tau \varphi} Pf\right)
\right\|_{H_h^{-1}}\lesssim h^{-1}\|\langle x\rangle^{-s}f_1\|_{H_h^1}.
\end{equation}
We are going to use the estimate \eqref{eq:2.23} with $f$ replaced by $f_1$. Note that $f$ satisfies the required hypothesis of Proposition \ref{2.4} because
\begin{equation*}
\begin{split}
\langle x \rangle^{s}(P_{0, \varphi} &- \lambda^2 \pm i \varepsilon) e^{\tau \varphi} (P - \lambda^2 \pm i\varepsilon)^{-1} g\\
&= \langle x \rangle^{s} e^{\tau \varphi }( -\Delta - \lambda^2 \pm i \varepsilon) (P - \lambda^2 \pm i\varepsilon)^{-1} g\\
&= \langle x \rangle^{s} e^{\tau \varphi} g + \langle x \rangle^{s} e^{\tau \varphi} ( i \nabla \cdot b + ib \cdot \nabla + \widetilde V ) (P- \lambda^2 \pm i\varepsilon)^{-1}g \in H^{-1}(\R^d).
\end{split}
\end{equation*}
By (\ref{eq:2.23}) and (\ref{eq:3.3}) we get
\begin{equation*}
\begin{split}
\|\langle x\rangle^{-s}f_1\|_{H_h^1}&\lesssim h\tau^{-1/2}\|\langle x\rangle^{s}
e^{\tau \varphi}(P-\lambda^2\pm i\varepsilon)f\|_{H_h^{-1}}\\
&+\tau^{-1/2}\|\langle x\rangle^{-s}f_1\|_{L^2}
+A^{\ell}(\varepsilon h)^{1/2}\|f_1\|_{L^2}.
\end{split}
 \end{equation*}
We can absorb the second term in the right-hand side of the above inequality by taking $\tau$ large enough independent of
$\lambda$. Since $h<\lambda^{-1}$, this leads to
\begin{equation*}
\|\langle x\rangle^{-s}f_1\|_{H_h^1}\lesssim \lambda^{-1}\|\langle x\rangle^{s}
e^{\tau \varphi}(P-\lambda^2\pm i\varepsilon)f\|_{H_h^{-1}}
+\varepsilon^{1/2}\lambda^{-1/2}\|f_1\|_{L^2},
 \end{equation*}
which in turn implies
\begin{equation}\label{eq:3.4}
\|\langle x\rangle^{-s}f\|_{H_h^1}\lesssim \lambda^{-1}\|\langle x\rangle^{s}
(P-\lambda^2\pm i\varepsilon)f\|_{H_h^{-1}}
+\varepsilon^{1/2}\lambda^{-1/2}\|f\|_{L^2},
\end{equation}
where the implicit constant depends on $\tau$, which is now fixed, but is indepedent of $\lambda$. 
On the other hand, the symmetry of the operator $P$ on the Hilbert space $L^2(\mathbb{R}^d)$ gives
\begin{equation}\label{eq:3.5} 
\begin{split}
\varepsilon\|f\|_{L^2}^2&=\left| {\rm Im}\,\left\langle(P-\lambda^2\pm i\varepsilon)f,f\right\rangle_{L^2} \right| \\
&\le\left|\left\langle \langle x\rangle^{s}g,\langle x\rangle^{-s}f\right\rangle_{L^2}\right|\\
&\le \gamma\lambda\|\langle x\rangle^{-s}f\|_{H_h^1}^2+
\gamma^{-1}\lambda^{-1}\|\langle x\rangle^{s}g\|_{H_h^{-1}}^2
\end{split}
\end{equation}
for every $\gamma>0$. Combining \eqref{eq:3.4}, \eqref{eq:3.5} and taking $\gamma$ small enough independent of $\lambda$
 we obtain
\begin{equation}\label{eq:3.6}
\|\langle x\rangle^{-s}f\|_{H_h^1}\lesssim\lambda^{-1}
\|\langle x\rangle^{s}g\|_{H_h^{-1}}.
\end{equation}
It is easy to see that \eqref{eq:3.6} is equivalent to \eqref{eq:3.2}.
\eproof

Denote $\mathbb{C}^-:=\{\lambda\in \mathbb{C}:{\rm Im}\,\lambda<0\}$ and 
$\mathcal{L}=\mathbb{C}$ if $d$ is odd, while
\begin{equation*}
\mathcal{L}=\left\{\lambda\in\mathbb{C}:-\frac{3\pi}{2}<\arg(\lambda)<\frac{\pi}{2}\right\}
\end{equation*}
 if $d$ is even. Also, given a parameter
$\gamma>0$, set $\mathcal{L}_\gamma=\{\lambda\in\mathcal{L}:{\rm Im}\,\lambda<\gamma\}$. 
In Proposition \ref{3.2} below, we combine \eqref{eq:3.2} with estimates for the free resolvent 
(reviewed in Appendix \ref{free resolv appendix}) to  construct an analytic continuation of the operator valued function 
$\mu (P - \lambda^2)^{-1}\mu : L^2(\R^d) \to L^2(\R^d)$ from $\C^{-}$ into $\mathcal{L}_{\gamma}$, for $\gamma$ small enough. 

\begin{prop} \label{3.2}
 Suppose (\ref{eq:1.3}) is fulfilled. There is a constant $\gamma>0$ such that the operator-valued function 
\begin{equation*}
\mu\nabla^{\ell}(P-\lambda^2)^{-1}\mu:L^2(\R^d) \to L^2(\R^d),\quad \ell=0,1,
\end{equation*}
extends analytically from $\mathbb{C}^-$ to $\mathcal{L}_\gamma$ and satisfies the bound 
\begin{equation}\label{eq:3.7}
\left\|\mu\nabla^{\ell}(P-\lambda^2)^{-1}\mu\right\|\le C(|\lambda|+1)^{\ell-1}
\end{equation}
for $\lambda\in\mathcal{L}_\gamma$, $|\lambda|\ge\delta$, $\delta>0$ being arbitrary, 
 with a constant $C>0$ which may depend on $\delta$.
 Moreover, if $d$ is odd and the condition \eqref{eq:1.6} is assumed, the bound (\ref{eq:3.7}) holds for all $\lambda\in\mathcal{L}_\gamma$.
\end{prop}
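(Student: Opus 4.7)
My plan is to build the analytic extension of $\mu\nabla^\ell(P-\lambda^2)^{-1}\mu$ via a Fredholm inversion argument on the free resolvent, combined with the limiting absorption bounds of Theorem \ref{3.1}. Write $P = -\Delta + W$ with $Wu = i\nabla\cdot(bu) + ib\cdot\nabla u + \widetilde V u$ and $\widetilde V = V + |b|^2$. Under \eqref{eq:1.3} the coefficients of $W$ are dominated pointwise by $\mu^2 = e^{-c\langle x\rangle}$, so the conjugate $\widetilde W \defeq \mu^{-1}W\mu^{-1}$ is a first-order expression with bounded coefficients that extends to a continuous map $H^1(\R^d) \to H^{-1}(\R^d)$ (derivatives hitting $\mu^{-1}$ produce bounded factors since $|\nabla\mu|\lesssim\mu$). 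Appendix \ref{free resolv appendix} provides the meromorphic continuation of $\mu R_0(\lambda)\mu : H^{-1}(\R^d) \to H^1(\R^d)$ from $\C^-$ to $\mathcal{L}$, together with compactness on the relevant spaces. Set
\begin{equation*}
\mathcal{W}(\lambda) \defeq \mu R_0(\lambda)\mu\, \widetilde W : H^1(\R^d) \to H^1(\R^d),
\end{equation*}
a meromorphic family of compact operators on $\mathcal{L}$. Sandwiching the resolvent identity $(P-\lambda^2)^{-1} = R_0(\lambda) - R_0(\lambda)W(P-\lambda^2)^{-1}$ by $\mu$ and factoring $W = \mu\widetilde W\mu$ yields
\begin{equation*}
\bigl[I + \mathcal{W}(\lambda)\bigr]\,\mu(P-\lambda^2)^{-1}\mu = \mu R_0(\lambda)\mu.
\end{equation*}
For $\Im\lambda$ sufficiently negative the bracket is invertible by Neumann series, so the analytic Fredholm theorem produces a meromorphic extension of $\mu(P-\lambda^2)^{-1}\mu$ to all of $\mathcal{L}$.

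It remains to exclude poles and establish \eqref{eq:3.7} on a strip. Since $\mu \lesssim \langle x\rangle^{-s}$ for every $s > 1/2$, Theorem \ref{3.1} delivers $\|\mu\nabla^\ell(P-\lambda^2 \pm i\varepsilon)^{-1}\mu\| \le C_\delta \lambda^{\ell-1}$ uniformly in $\lambda\ge\delta$ and $\varepsilon\in(0,1]$, and standard limiting absorption gives the boundary values $\mu\nabla^\ell(P-\lambda^2 \pm i0)^{-1}\mu$ on $[\delta,\infty)$. This forces $I + \mathcal{W}(\lambda)$ to be invertible at every real $\lambda \ge \delta$; combined with decay of $\|\mathcal{W}(\lambda)\|_{H^1 \to H^1}$ as $|\lambda|\to\infty$ in horizontal strips (from the high-frequency estimates on $\mu R_0(\lambda)\mu$ recalled in Appendix \ref{free resolv appendix}), an open-neighborhood argument produces $\gamma > 0$ so that $I + \mathcal{W}(\lambda)$ remains invertible on $\{\lambda\in\mathcal{L}_\gamma : |\lambda|\ge\delta\}$. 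Inverting yields the analytic extension and the bound \eqref{eq:3.7} with $\ell = 0$; the $\ell = 1$ case follows by applying $\nabla$ to the factored identity and inserting $\|\mu\nabla R_0(\lambda)\mu\| = O(1)$ from the same appendix.

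For the second assertion, recall that in odd dimension $\mu R_0(\lambda)\mu$ is entire on $\C$, so the only potential poles of the continuation in $\mathcal{L}_\gamma\cap\{|\lambda|<\delta\}$ arise from $-1$ lying in the spectrum of $\mathcal{W}(\lambda)$. Condition \eqref{eq:1.6} provides uniform bounds on $\mu\nabla^\ell(P-\lambda^2\pm i\varepsilon)^{-1}\mu$ all the way down to $\lambda = 0$, which rules out such spectrum at every real small $\lambda$; a perturbation argument propagates this to a small neighborhood in the strip. After shrinking $\gamma$ if necessary, the bound \eqref{eq:3.7} holds on all of $\mathcal{L}_\gamma$. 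The main technical hurdle I anticipate is the careful setup of the mapping properties of $\mathcal{W}(\lambda)$: $\widetilde W$ must be treated as an $H^1 \to H^{-1}$ map via distributional pairing (since $b \in L^\infty$ need not be differentiable, so $\nabla \cdot (bu)$ is defined only distributionally), and the compactness and continuation of $\mu R_0(\lambda)\mu$ as an operator $H^{-1} \to H^1$ must be extracted carefully from Appendix \ref{free resolv appendix} rather than in the simpler $L^2 \to L^2$ scale.
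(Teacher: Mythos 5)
Your overall skeleton (resolvent identity sandwiched by $\mu$, analytic Fredholm inversion against the continued free resolvent, limiting absorption from Theorem \ref{3.1} to exclude real poles, condition \eqref{eq:1.6} at $\lambda=0$ in odd dimensions) is in the right family, but there is a genuine gap exactly where the magnetic term makes this proposition delicate: your claim that $\|\mathcal{W}(\lambda)\|_{H^1\to H^1}$ decays as $|\lambda|\to\infty$ in horizontal strips is false. Since $\widetilde W$ is a genuine first-order perturbation, estimating $\mathcal{W}(\lambda)=\mu R_0(\lambda)\mu\,\widetilde W$ on $H^1$ forces you to control $\mu R_0(\lambda)\mu$ as a map $H^{-1}\to H^1$, i.e.\ with a derivative on \emph{each} side of the free resolvent; on the real axis this norm grows like $|\lambda|$ (Lemma \ref{C.1} with $\ell_1=\ell_2=1$), and Appendix \ref{free resolv appendix} gives no smallness even for the one-derivative version ($\|\mu\nabla R_0(\lambda)\mu\|=O(1)$, not $o(1)$). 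This is precisely the obstruction the paper flags in the introduction ("the smallness of the remainder at high frequency \dots does not apply in the case of a first order perturbation"). Without that smallness, your "open-neighborhood argument" does not produce a strip of \emph{uniform} width $\gamma$: the quantitative input you would need is $\|\mathcal{W}(\lambda)-\mathcal{W}(\operatorname{Re}\lambda)\|\cdot\|(I+\mathcal{W}(\operatorname{Re}\lambda))^{-1}\|<1$ uniformly in $\operatorname{Re}\lambda$, and both factors degrade like $|\lambda|$ in your scale of spaces (note $(I+\mathcal{W})^{-1}=I-\mu(P-\lambda^2)^{-1}\mu\,\widetilde W$, whose $H^1\to H^1$ norm is only $O(|\lambda|)$ by Theorem \ref{3.1}). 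So neither the analyticity on all of $\{\lambda\in\mathcal{L}_\gamma:|\lambda|\ge\delta\}$ nor the uniform bound \eqref{eq:3.7} follows from the argument as written. (Secondary, fixable points: the Neumann-series step for $\operatorname{Im}\lambda\ll 0$ also fails for the same reason, though invertibility at a single point of $\C^-$ can be had directly from $(I+\mathcal{W})^{-1}=I-\mu(P-\lambda^2)^{-1}\mu\widetilde W$; and compactness of $\mathcal{W}(\lambda)$ on $H^1$ needs the argument you sketch only vaguely, since $\mu R_0\mu:H^{-1}\to H^1$ has no regularity gain.)

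The paper's proof circumvents exactly this by never letting two derivatives land on a free resolvent and by inverting only operators that are \emph{differences} of free resolvents at $\lambda$ and at the nearby real point $z=\operatorname{Re}\lambda$: it introduces $\mu^{-1}ib\cdot\nabla(P-\lambda^2)^{-1}\mu$ as a second unknown (identities \eqref{eq:3.12}--\eqref{eq:3.14}), so the operators $T_3(\lambda,z)$ and $F_2(\lambda,z)$ contain at most one derivative adjacent to $(P_0-\lambda^2)^{-1}-(P_0-z^2)^{-1}$, and then uses \eqref{eq:C.6}, which for $\ell\le 1$ gives the bound $C|\operatorname{Im}\lambda|$ \emph{uniformly in} $\operatorname{Re}\lambda$. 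That is what yields $\|T_3\|,\|F_2\|\le 1/2$ in a strip of fixed width, hence analyticity and \eqref{eq:3.7} uniformly up to $|\lambda|=\infty$. To repair your argument you would need to replace the single Fredholm equation on $H^1$ by such a system (or otherwise manufacture uniform-in-$\operatorname{Re}\lambda$ smallness of the operator you invert), since no estimate available for $\mu R_0(\lambda)\mu$ with two derivatives can supply it.
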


From \eqref{eq:3.7} and Lemma \ref{B.1}, we obtain the following bounds on the $\lambda$-derivatives of $\mu(P - \lambda^2)^{-1} \mu$, which are key to our proof of the wave decay in Section  \ref{time decay section}.

\begin{Theorem} \label{3.3}
Assume the condition \eqref{eq:1.3} is fulfilled. Then, given any $\delta>0$ and any integer $k\ge 0$, the bound
\begin{equation}\label{eq:3.8}
\left\|\frac{d^k}{d\lambda^k}\left(\mu\nabla^{\ell}(P-\lambda^2)^{-1}\mu\right)\right\|
\le C^{k+1}k!(|\lambda|+1)^{\ell-1}
\end{equation}
holds for all $\lambda\in\R$, $|\lambda|\ge\delta$, with a constant $C=C_\delta>0$, where $\ell\in\{0,1\}$.
If $d$ is odd and the condition \eqref{eq:1.6} is assumed, the bound \eqref{eq:3.8} holds for all $\lambda\in\R$.   
\end{Theorem}

{\it Proof of Proposition \ref{3.2}.} Denote by $P_0$ the self-adjoint realization of $-\Delta$
on $L^2(\mathbb{R}^d)$. 
Let $\lambda\in\mathbb{C}^-$ and denote by $I$ the identity operator. We begin from two resolvent identities,
\begin{align*}
& (P- \lambda^2)^{-1} (\widetilde{V} +  i\nabla\cdot b+ib\cdot\nabla) =  
I - (P- \lambda^2)^{-1} (P_0 - \lambda^2) \quad \text{on}\quad  H^2(\R^d), \\
& (P_0- \lambda^2)^{-1} (\widetilde{V} +  i\nabla\cdot b+ib\cdot\nabla) =  
-I + (P_0- \lambda^2)^{-1} (P - \lambda^2) \quad \text{on}\quad D(P), 
\end{align*}
the first of which we prove in detail in Appendix \ref{self-adjointness mag Schro appendix}. These yield
\begin{equation}\label{eq:3.9}
\begin{split}
&(P-\lambda^2)^{-1}-(P_0-\lambda^2)^{-1}\\
&=-(P_0-\lambda^2)^{-1}(\widetilde V+i\nabla\cdot b+ib\cdot\nabla)(P-\lambda^2)^{-1}\\
&=-(P-\lambda^2)^{-1}(\widetilde V+i\nabla\cdot b+ib\cdot\nabla)(P_0-\lambda^2)^{-1}.
\end{split}
\end{equation}
Let $z\in\mathbb{C}^-$. By \eqref{eq:3.9}, we get
\begin{equation}\label{eq:3.10}
\begin{split}
&(P-\lambda^2)^{-1}-(P-z^2)^{-1}\\
&=(\lambda^2-z^2)(P-z^2)^{-1}(P-\lambda^2)^{-1}\\
&=L^\sharp(z)((P_0-\lambda^2)^{-1}-(P_0-z^2)^{-1})L^\flat(\lambda),
\end{split}
\end{equation}
where
\begin{align*}
& L^\sharp=I-(P-z^2)^{-1}(\widetilde V+i\nabla\cdot b+ib\cdot\nabla),\\
 & L^\flat=I-(\widetilde V+i\nabla\cdot b+ib\cdot\nabla)(P-\lambda^2)^{-1}.
 \end{align*}
Multiplying both sides of (\ref{eq:3.10}) by $\mu$ we get
\begin{equation}\label{eq:3.11}
\begin{split}
&\mu(P-\lambda^2)^{-1}\mu-\mu(P-z^2)^{-1}\mu\\
&=\sum_{\ell_1=0}^1\sum_{\ell_2=0}^1L^\sharp_{\ell_1}(z)\mu^{1-\ell_1}(-i\mu^{-1}b\cdot\nabla)^{\ell_1}((P_0-\lambda^2)^{-1}-(P_0-z^2)^{-1})(-i\nabla\cdot b\mu^{-1})^{\ell_2}\mu^{1-\ell_2}L^\flat_{\ell_2}(\lambda),
\end{split}
\end{equation}
where
\begin{align*}
& L_0^\sharp=I-\mu(P-z^2)^{-1}(\widetilde V+i\nabla\cdot b)\mu^{-1},\\
& L_1^\sharp=\mu(P-z^2)^{-1}\mu,\\
& L_0^\flat=I-\mu^{-1}(\widetilde V+ib\cdot\nabla)(P-\lambda^2)^{-1}\mu,\\
& L_1^\flat=\mu(P-\lambda^2)^{-1}\mu,
\end{align*}
are bounded operators on $L^2(\mathbb{R}^d)$. 
We now let the operator $\mu^{-1}ib\cdot\nabla$ act on the left side of (\ref{eq:3.10}) and multiply the right side by $\mu$.
We get
\begin{equation}\label{eq:3.12}
\mu^{-1}ib\cdot\nabla(P-\lambda^2)^{-1}\mu=T_1(\lambda, z)+T_2(\lambda,z)\mu(P-\lambda^2)^{-1}\mu+T_3(\lambda,z)
\mu^{-1}ib\cdot\nabla(P-\lambda^2)^{-1}\mu,
\end{equation}
where
\begin{align*}
& T_1=\mu^{-1}ib\cdot\nabla(P-z^2)^{-1}\mu -\sum_{\ell_1=0}^1 \widetilde L^\sharp_{\ell_1}(z)\mu^{1-\ell_1}(i\mu^{-1}b\cdot\nabla)^{\ell_1}((P_0 - \lambda^2)^{-1} - (P_0 - z^2)^{-1})\mu ,\\
& T_2=\sum_{\ell_1=0}^1\sum_{\ell_2=0}^1\widetilde L^\sharp_{\ell_1}(z)\mu^{1-\ell_1}(i\mu^{-1}b\cdot\nabla)^{\ell_1}((P_0-\lambda^2)^{-1}-(P_0-z^2)^{-1})
(\widetilde V\mu^{-1})^{1-\ell_2}(i\nabla\cdot b\mu^{-1})^{\ell_2},\\
& T_3=\sum_{\ell_1=0}^1\widetilde L^\sharp_{\ell_1}(z)\mu^{1-\ell_1}
(i\mu^{-1}b\cdot\nabla)^{\ell_1}((P_0-\lambda^2)^{-1}-(P_0-z^2)^{-1})\mu,\\
&\widetilde L_0^\sharp=\mu^{-1}ib\cdot\nabla(P-z^2)^{-1}(\widetilde V+i\nabla\cdot b)\mu^{-1},\\
&\widetilde L_1^\sharp=-I+\mu^{-1}ib\cdot\nabla(P-z^2)^{-1}\mu.
\end{align*}
Fix $z \in \C^-$ and consider the above operators as functions of $\lambda$. Due to the exponential decay in \eqref{eq:1.3}, the operators 
$\widetilde L_0^\sharp$
and $\widetilde L_1^\sharp$ are bounded on $L^2(\R^d)$. Furthermore, the operators 
$(i\mu^{-1}b\cdot\nabla)^{\ell_1}(P_0-\lambda^2)^{-1}\mu$, $\ell_1=0,1$ are compact and, in view of Lemma \ref{C.2}, extend holomorphically
to $\mathcal{L}_{\gamma_0}$ for some constant $\gamma_0>0$. Hence $T_3(\lambda,z)$ is a family of compact operators, analytic in 
$\mathcal{L}_{\gamma_0}$. Therefore, since $T_3(z,z)\equiv 0$, by the Fredholm theorem we conclude $(I-T_3(\lambda,z))^{-1}$ exists as a
meromorphic in $\mathcal{L}_{\gamma_0}$ operator-valued function. Thus by (\ref{eq:3.12}), still for $\lambda\in\mathbb{C}^-$, we get
\begin{equation}\label{eq:3.13}
\mu^{-1}ib\cdot\nabla(P-\lambda^2)^{-1}\mu=(I-T_3)^{-1}T_1+(I-T_3)^{-1}T_2\mu(P-\lambda^2)^{-1}\mu.
\end{equation}
By (\ref{eq:3.11}) and (\ref{eq:3.13}),
\begin{equation}\label{eq:3.14}
\mu(P-\lambda^2)^{-1}\mu=F_1(\lambda,z)+F_2(\lambda,z)\mu(P-\lambda^2)^{-1}\mu,
\end{equation}
where
\begin{align*}
 F_1&=\mu(P-z^2)^{-1}\mu + \sum_{\ell_1 = 0}^1 L^\sharp_{\ell_1}(z)\mu^{1-\ell_1}(-i\mu^{-1}b\cdot\nabla)^{\ell_1} ((P_0-\lambda^2)^{-1}-(P_0-z^2)^{-1})\mu \\
 &-\sum_{\ell_1=0}^1L^\sharp_{\ell_1}(z)\mu^{1-\ell_1}(-i\mu^{-1}b\cdot\nabla)^{\ell_1}
((P_0-\lambda^2)^{-1}-(P_0-z^2)^{-1})\mu(I-T_3)^{-1}T_1,\\
 F_2&=\sum_{\ell_1=0}^1\sum_{\ell_2=0}^1L^\sharp_{\ell_1}(z)\mu^{1-\ell_1}(-i\mu^{-1}b\cdot\nabla)^{\ell_1} \\
 &\cdot ((P_0-\lambda^2)^{-1}-(P_0-z^2)^{-1})(-i\nabla\cdot b\mu^{-1})^{\ell_2}(-\widetilde V\mu^{-1})^{1-\ell_2}\\
&-\sum_{\ell_1=0}^1L^\sharp_{\ell_1}(z)\mu^{1-\ell_1}(-i\mu^{-1}b\cdot\nabla)^{\ell_1}
((P_0-\lambda^2)^{-1}-(P_0-z^2)^{-1})\mu(I-T_3)^{-1}T_2.
\end{align*}
It is easy to see that the operator $F_2$ sends $L^2(\mathbb{R}^d)$ into $H^1(\mathbb{R}^d)$. 
Therefore $F_2$ is a meromorphic (in $\lambda\in\mathcal{L}_{\gamma_0}$) family of compact operators on $L^2(\R^d)$. Since $F_2(z,z)\equiv 0$, 
this implies that $(I-F_2)^{-1}$ and $F_1$ 
are meromorphic operator-valued functions in $\mathcal{L}_{\gamma_0}$ and, by (\ref{eq:3.14}), we have
\begin{equation}\label{eq:3.15}
\mu(P-\lambda^2)^{-1}\mu=(I-F_2(\lambda,z))^{-1}F_1(\lambda,z).
\end{equation}
Thus we conclude that 
\begin{equation*}
\mu(P-\lambda^2)^{-1}\mu:L^2(\R^d) \to L^2(\R^d)
\end{equation*}
extends meromorphically from $\mathbb{C}^-$ to $\mathcal{L}_{\gamma_0}$. Note also that in view of the resolvent estimate \eqref{eq:3.2}, the identity (\ref{eq:3.15}) extends to all $z\in \mathbb{R}$, $z\neq 0$. 

Let now $0<{\rm Im}\,\lambda<\gamma_0$, $z={\rm Re}\,\lambda$, $|z|\ge\delta$, $0<\delta\ll 1$ being arbitrary. It follows from the resolvent estimate (\ref{eq:3.2}) that
\begin{equation*}
\|\widetilde L_\ell^\sharp(z)\|\lesssim |z|^{1-\ell},\quad \ell=0,1,
\end{equation*}
which together with (\ref{eq:C.6}) imply
\begin{equation}\label{eq:3.16}
\|T_3(\lambda,z)\|\lesssim {\rm Im}\,\lambda\le 1/2,
\end{equation}
if ${\rm Im}\,\lambda\le\gamma_1$ with some constant $0<\gamma_1<\gamma_0$. By (\ref{eq:3.2}) and (\ref{eq:C.5}) we also have
\begin{equation}\label{eq:3.17}
\|T_j(\lambda,z)\|\lesssim |z|^{j-1},\quad j=1,2,
\end{equation}
\begin{equation}\label{eq:3.18}
\|L_\ell^\sharp(z)\|\lesssim |z|^{-\ell},\quad \ell=0,1.
\end{equation}
By (\ref{eq:3.16}), (\ref{eq:3.17}) and (\ref{eq:3.18}) together with (\ref{eq:C.6}),
\begin{equation}\label{eq:3.19}
\|F_1(\lambda,z)\|\lesssim |z|^{-1},
\end{equation}
\begin{equation}\label{eq:3.20}
\|F_2(\lambda,z)\|\lesssim {\rm Im}\,\lambda\le 1/2,
\end{equation}
if ${\rm Im}\,\lambda\le\gamma_2$ with some constant $0<\gamma_2<\gamma_1$. By (\ref{eq:3.15}) and (\ref{eq:3.20}) we conclude 
 $\mu(P-\lambda^2)^{-1}\mu$
is analytic in $\{\lambda\in\mathcal{L}_{\gamma_2},\,|{\rm Re}\,\lambda|\ge\delta\}$. In odd dimensions, if \eqref{eq:1.6} holds, then 
 $\mu(P-\lambda^2)^{-1}\mu$
is analytic in $\mathcal{L}_{\gamma_2}$ since \eqref{eq:1.6} implies $\lambda = 0$ is not a pole.
The estimate (\ref{eq:3.7}) with $\ell=0$, $\gamma=\gamma_2$, follows from (\ref{eq:3.15}), (\ref{eq:3.19}) and (\ref{eq:3.20}). 
The estimate (\ref{eq:3.7}) with $\ell=1$ is obtained by combining
\eqref{eq:3.7} with $\ell=0$, the first identity in
\eqref{eq:3.9}, \eqref{eq:3.13}, and \eqref{eq:C.5}.
\eproof

\section{Resolvent bounds in the exterior of a non-trapping obstacle} \label{resolv bds nontrap obs section}

Let $\mathcal{O}\subset\mathbb{R}^d$, $d\ge 2$, be a bounded domain with smooth boundary such that 
$\Omega=\mathbb{R}^d\setminus\mathcal{O}$ is connected. 
Denote by $P$ the Dirichlet self-adjoint realization of $-\Delta+V$   
on the Hilbert space $L^2(\Omega)$, where $V\in L^\infty(\Omega)$ is a real-valued potential satisfying
\begin{equation}\label{eq:4.1}
|V(x)|\le C\langle x\rangle^{-\rho}, \quad C>0,\rho>1.
\end{equation}
We have

\begin{Theorem} \label{4.1}
Under the conditions (\ref{eq:1.4}) and (\ref{eq:4.1}), given any $s>1/2$ and $\delta>0$ there is a constant $C>0$ such that 
\begin{equation}\label{eq:4.2}
\left\|\langle x\rangle^{-s}\partial_x^\alpha(P-\lambda^2\pm i\varepsilon)^{-1}\partial_x^\beta\langle x\rangle^{-s}\right\|
\le C\lambda^{|\alpha|+|\beta|-1},\quad\lambda\ge\delta,\,0<\varepsilon<1,
\end{equation}
 where $\alpha$ and $\beta$ are multi-indices such that $|\alpha|\le 1$ and $|\beta|\le 1$.  
\end{Theorem}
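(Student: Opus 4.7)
The plan is to adapt the proof of Theorem \ref{3.1} to the obstacle setting by combining the Carleman estimate \eqref{eq:2.5}, whose built-in cutoff $1-\psi$ erases a neighborhood of $\overline{\mathcal O}$, with the non-trapping bound \eqref{eq:1.4} for the free Dirichlet Laplacian $\widetilde P=-\Delta$. As in Theorem \ref{3.1}, it is enough to treat $s>1/2$ small enough that $2s<\rho$. Fix $g\in C_0^\infty(\Omega)$, set $f=(P-\lambda^2\pm i\varepsilon)^{-1}g\in H_0^1(\Omega)\cap H^2(\Omega)$, and choose nested cutoffs $\psi,\widetilde\chi\in C_0^\infty(\R^d;[0,1])$ with $\psi=1$ on a neighborhood of $\overline{\mathcal O}$, $\widetilde\chi=1$ on $\supp\psi$, and with $\chi$ from \eqref{eq:1.4} equal to $1$ on $\supp\widetilde\chi$; I arrange the supports so that $\supp\nabla\widetilde\chi\cap\supp\psi=\emptyset$, i.e.\ $\psi\equiv 0$ on $\supp\nabla\widetilde\chi$.

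Since $\psi f$ vanishes in a neighborhood of $\partial\Omega$, $(1-\psi)f$ extends by zero to an element of $H^2(\R^d)$, and on its support $P$ coincides with $-\Delta+V$; a direct computation gives
\begin{equation*}
(-\Delta-\lambda^2\pm i\varepsilon)(1-\psi)f=(1-\psi)g-(1-\psi)Vf+[\Delta,\psi]f.
\end{equation*}
Applying \eqref{eq:2.5} to $e^{\tau\varphi}(1-\psi)f$, absorbing $Vf$ via $2s<\rho$ as in \eqref{eq:3.3}, handling the $\varepsilon^{1/2}\|f\|_{L^2}$ remainder via the symmetry identity \eqref{eq:3.5}, and then fixing $\tau$ large, I expect to obtain
\begin{equation*}
\|\langle x\rangle^{-s}(1-\psi)f\|_{H^1_h}\lesssim\lambda^{-1}\|\langle x\rangle^{s}g\|_{L^2}+\|\widetilde\chi f\|_{H^1},
\end{equation*}
since $[\Delta,\psi]f$ is supported where $\widetilde\chi=1$.

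To control $\|\widetilde\chi f\|_{H^1}$ I would use that $\widetilde\chi f\in D(\widetilde P)$ satisfies
\begin{equation*}
(\widetilde P-\lambda^2\pm i\varepsilon)(\widetilde\chi f)=\widetilde\chi g-\widetilde\chi Vf-[\Delta,\widetilde\chi]f,
\end{equation*}
whose right-hand side is supported where $\chi=1$; hence \eqref{eq:1.4} followed by interior elliptic regularity yields $\|\widetilde\chi f\|_{H^1}\lesssim\lambda^{-1}(\|g\|_{L^2}+\|\widetilde\chi Vf\|_{L^2}+\|[\Delta,\widetilde\chi]f\|_{L^2})$. The term $\widetilde\chi Vf$ is absorbed using $2s<\rho$, while by the disjointness of $\supp\nabla\widetilde\chi$ and $\supp\psi$ the commutator $[\Delta,\widetilde\chi]f$ lives where $1-\psi=1$, so its $L^2$ norm is controlled by $\|\langle x\rangle^{-s}(1-\psi)f\|_{H^1_h}$ from the previous paragraph.

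The principal obstacle is the circular coupling of the two estimates, since each produces a commutator term controlled by the other. I would resolve this by choosing $\tau$, and then $\lambda_1\ge\delta$, sufficiently large that both coupling coefficients are strictly less than one, which closes the system for $\lambda\ge\lambda_1$; the intermediate range $\delta\le\lambda\le\lambda_1$ is handled by the same gluing, using the uniform validity of \eqref{eq:1.4} down to $\lambda=0$ recorded after its statement. Since $\widetilde\chi+(1-\psi)\ge 1$ on $\R^d$, the bounds on $\widetilde\chi f$ and $(1-\psi)f$ together produce \eqref{eq:4.2}.
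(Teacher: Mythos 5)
Your high-frequency argument can be made to close (it is essentially a more laborious variant of the paper's route, which transfers \eqref{eq:1.4} to weighted bounds for $\widetilde P$ via the free resolvent, \eqref{eq:4.5}--\eqref{eq:4.6}, and then removes $V$ perturbatively through \eqref{eq:4.7}--\eqref{eq:4.8}). The genuine gap is in the intermediate range $\delta\le\lambda\le\lambda_1$. In your interior estimate you must treat $\widetilde\chi Vf$ as an error term, since \eqref{eq:1.4} is an assumption about the free Dirichlet Laplacian $\widetilde P=-\Delta$ only; this produces
$\|\widetilde\chi f\|\lesssim\lambda^{-1}\bigl(\|g\|+\|V\|_{L^\infty}\|\widetilde\chi f\|+\|[\Delta,\widetilde\chi]f\|\bigr)$,
and the middle term can only be absorbed when $\lambda^{-1}\|V\|_{L^\infty}$ is small, i.e.\ for $\lambda$ large. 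Your remark that it is ``absorbed using $2s<\rho$'' does not apply: on the compact support of $\widetilde\chi$ the decay of $V$ buys nothing, and there is no other parameter available ($\tau$ does not enter the interior estimate, and $\lambda$ is bounded). The exponential Carleman weight does beat the commutator loop between the two regions (that is exactly the mechanism in \eqref{eq:4.13}--\eqref{eq:4.14}), but it cannot touch this ``diagonal'' potential term, which lives entirely inside the interior region. This is precisely why the paper does not use \eqref{eq:1.4} at bounded frequencies at all: for $\delta\le\lambda\le\lambda_1$ it controls the region near the obstacle by Theorem 2.1 of \cite{kn:V4} applied to $\psi_af$ (a fixed-frequency resolvent/Carleman estimate for $-\Delta+V$ with arbitrary $L^\infty$ potential, requiring no smallness), and glues that to the exterior Carleman estimate \eqref{eq:2.5}; the nontrapping bound \eqref{eq:1.4}, extended to weighted bounds \eqref{eq:4.4}, is used only where $\|K(\lambda)\|\le C\lambda^{-1}\le 1/2$ makes the potential perturbative. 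Without some such non-perturbative input at bounded frequencies (equivalently, a quantitative absence of real resonances for $-\Delta+V$ on $\Omega$, uniform in $\varepsilon$), your scheme cannot close there.

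Two smaller points. First, your sketch only yields \eqref{eq:4.2} with $\beta=0$ (and $|\alpha|\le1$ from the $H^1_h$ norm on the left); to allow $\partial_x^\beta$ on the right you need either the $H^{-1}_h$ Carleman estimate of Proposition \ref{2.4} or, as the paper does, the identities \eqref{eq:4.9} combined with the coercivity-based bound \eqref{eq:4.4} for $\widetilde P$. Second, upgrading the $L^2\to L^2$ bound \eqref{eq:1.4} to an $H^1$ bound for $\widetilde\chi f$ costs a factor of order $\lambda$ (coercivity/elliptic estimate), which is harmless but should be tracked, and using \eqref{eq:1.4} for $(\widetilde P-\lambda^2\pm i\varepsilon)^{-1}$ with $0<\varepsilon<1$ uses the same implicit uniform-in-$\varepsilon$ reading of \eqref{eq:1.4} that the paper itself adopts in \eqref{eq:4.3}, so that is not an objection to your proposal specifically.
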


{\it Proof.} In view of the coercivity of the operator $\widetilde P$, the bound (\ref{eq:1.4}) implies
\begin{equation}\label{eq:4.3}
\left\|\chi\partial_x^\alpha(\widetilde P-\lambda^2\pm i\varepsilon)^{-1}
\partial_x^\beta\chi\right\|
\lesssim\lambda^{|\alpha|+|\beta|-1},\quad\lambda\ge\lambda_0,
\end{equation}
for all multi-indices $\alpha$ and $\beta$ such that $|\alpha|\le 1$ and $|\beta|\le 1$. Let us see that (\ref{eq:4.3}) implies the weighted
resolvent bounds
\begin{equation}\label{eq:4.4}
\left\|\langle x\rangle^{-s}\partial_x^\alpha(\widetilde P-\lambda^2\pm i\varepsilon)^{-1}
\partial_x^\beta\langle x\rangle^{-s}\right\|
\lesssim\lambda^{|\alpha|+|\beta|-1},\quad\lambda\ge\lambda_0,
\end{equation}
for every $s>1/2$ and all multi-indices $\alpha$ and $\beta$ such that $|\alpha|\le 1$ and $|\beta|\le 1$. 
To this end, we use the fact that (\ref{eq:4.4}) holds for the 
operator $P_0$, the self-adjoint realization of $-\Delta$
on $L^2(\mathbb{R}^d)$ (see Lemma \ref{C.1}). 
Let $\eta, \, \chi \in C^\infty(\mathbb{R}^d)$ be of compact support such that
$\eta =1$ on $\mathcal{O}$ and $\chi=1$ on supp$\,\eta$.  We have the identity
\begin{equation*}
(P_0-\lambda^2\pm i\varepsilon)(1-\eta)(\widetilde P-\lambda^2\pm i\varepsilon)^{-1}=
[\Delta,\eta](\widetilde P-\lambda^2\pm i\varepsilon)^{-1}+1-\eta,
\end{equation*}
which implies
\begin{equation}\label{eq:4.5}
(1-\eta)(\widetilde P-\lambda^2\pm i\varepsilon)^{-1}=(P_0-\lambda^2\pm i\varepsilon)^{-1}[\Delta,\eta](\widetilde P-\lambda^2\pm i\varepsilon)^{-1}+(P_0-\lambda^2\pm i\varepsilon)^{-1}(1-\eta).
\end{equation}
Similarly,
\begin{equation}\label{eq:4.6}
(\widetilde P-\lambda^2\pm i\varepsilon)^{-1}(1-\eta)=(\widetilde P-\lambda^2\pm i\varepsilon)^{-1}[\Delta,\eta](P_0-\lambda^2\pm i\varepsilon)^{-1}+(1-\eta)(P_0-\lambda^2\pm i\varepsilon)^{-1}.
\end{equation}
By \eqref{eq:4.3}, (\ref{eq:4.5}), and (\ref{eq:4.6}),
\begin{equation*}
\begin{split}
&\left\|\langle x\rangle^{-s}\partial_x^\alpha(\widetilde P-\lambda^2\pm i\varepsilon)^{-1}
\partial_x^\beta\langle x\rangle^{-s}\right\|\le \left\|\chi\partial_x^\alpha(\widetilde P-\lambda^2\pm i\varepsilon)^{-1}
\partial_x^\beta\langle x\rangle^{-s}\right\|\\
&+\left\|\langle x\rangle^{-s}(1-\eta)\partial_x^\alpha(\widetilde P-\lambda^2\pm i\varepsilon)^{-1}
\partial_x^\beta\langle x\rangle^{-s}\right\|\\
&\lesssim \left\|\chi\partial_x^\alpha(\widetilde P-\lambda^2\pm i\varepsilon)^{-1}
\partial_x^\beta\langle x\rangle^{-s}\right\|+\lambda^{|\alpha|}\left\|\chi(\widetilde P-\lambda^2\pm i\varepsilon)^{-1}
\partial_x^\beta\langle x\rangle^{-s}\right\|+\lambda^{|\alpha|+|\beta|-1}\\
&\lesssim \left\|\chi\partial_x^\alpha(\widetilde P-\lambda^2\pm i\varepsilon)^{-1}
\partial_x^\beta\chi\right\|+\lambda^{|\alpha|}\left\|\chi(\widetilde P-\lambda^2\pm i\varepsilon)^{-1}
\partial_x^\beta\chi\right\|+\lambda^{|\alpha|+|\beta|-1}\\
&+\left\|\chi\partial_x^\alpha(\widetilde P-\lambda^2\pm i\varepsilon)^{-1}
\partial_x^\beta(1-\eta)\langle x\rangle^{-s}\right\|+\lambda^{|\alpha|}\left\|\chi(\widetilde P-\lambda^2\pm i\varepsilon)^{-1}
\partial_x^\beta(1-\eta)\langle x\rangle^{-s}\right\|+\lambda^{|\alpha|+|\beta|-1}\\
&\lesssim \left\|\chi\partial_x^\alpha(\widetilde P-\lambda^2\pm i\varepsilon)^{-1}
\partial_x^\beta\chi\right\|+\lambda^{|\beta|}\left\|\chi\partial_x^\alpha(\widetilde P-\lambda^2\pm i\varepsilon)^{-1}
\chi\right\|\\
&+\lambda^{|\alpha|+|\beta|}\left\|\chi(\widetilde P-\lambda^2\pm i\varepsilon)^{-1}
\chi\right\|+\lambda^{|\alpha|+|\beta|-1}\lesssim \lambda^{|\alpha|+|\beta|-1}.
\end{split}
 \end{equation*}

We now derive (\ref{eq:4.2}) from (\ref{eq:4.4}) for large $\lambda$. 
To this end we use the resolvent identity
\begin{equation}\label{eq:4.7}
(I+K(\lambda))\langle x\rangle^{-s}(P-\lambda^2\pm i\varepsilon)^{-1}\langle x\rangle^{-s}
=\langle x\rangle^{-s}(\widetilde P-\lambda^2\pm i\varepsilon)^{-1}\langle x\rangle^{-s},
\end{equation}
where
 \begin{equation*}
K(\lambda)=\langle x\rangle^{-s}(\widetilde P-\lambda^2\pm i\varepsilon)^{-1}\langle x\rangle^{s}V.
 \end{equation*}
If $1/2<s\le\rho/2$, by (\ref{eq:4.4}) we get
\begin{equation}\label{eq:4.8}
\|K(\lambda)\|\le C\lambda^{-1}\le 1/2,
\end{equation}
for $\lambda\gg 1$. It follows from (\ref{eq:4.4}), (\ref{eq:4.7}) 
and (\ref{eq:4.8}) that there is a constant $\lambda_1>\lambda_0$ such that 
(\ref{eq:4.2}) with $\alpha=\beta=0$ holds for $\lambda\ge\lambda_1$.
In the general case (\ref{eq:4.2}) follows from the identities
\begin{equation}\label{eq:4.9}
\begin{split}
&\langle x\rangle^{-s}\partial_x^\alpha(P-\lambda^2\pm i\varepsilon)^{-1}\partial_x^\beta\langle x\rangle^{-s}-
\langle x\rangle^{-s}\partial_x^\alpha(\widetilde P-\lambda^2\pm i\varepsilon)^{-1}\partial_x^\beta\langle x\rangle^{-s}\\
&=-\langle x\rangle^{-s}\partial_x^\alpha(\widetilde P-\lambda^2\pm i\varepsilon)^{-1}
V(P-\lambda^2\pm i\varepsilon)^{-1}\partial_x^\beta\langle x\rangle^{-s}\\
&=-\langle x\rangle^{-s}\partial_x^\alpha(P-\lambda^2\pm i\varepsilon)^{-1}
V(\widetilde P-\lambda^2\pm i\varepsilon)^{-1}\partial_x^\beta\langle x\rangle^{-s},
\end{split}
\end{equation}
together with (\ref{eq:4.4}) and (\ref{eq:4.2}) with $\alpha=\beta=0$. 

Next we prove (\ref{eq:4.2}) as well as (\ref{eq:4.4}) for $\delta\le\lambda\le\lambda_1$, $0<\delta\ll 1$ being arbitrary, by using the Carleman estimate (\ref{eq:2.5}). We keep the same notations as in Section 2. 
Given a function $g$ such that $\langle x\rangle^{s}g\in L^2(\Omega)$, set
 \begin{equation*}
f=(P-\lambda^2\pm i\varepsilon)^{-1}g.
 \end{equation*}
Clearly, $f|_{\partial\Omega}=0$. Let $a\gg 1$ be such that $\mathcal{O}\subset B_a:=\{x\in \mathbb{R}^d:|x|\le a\}$. 
Choose  functions $\psi_a,\widetilde\psi_a\in C_0^\infty(\mathbb{R}^d)$
such that $\widetilde\psi_a(x)=1$ for $|x|\le a+1$, $\widetilde\psi_a(x)=0$ for $|x|\ge a+2$, $\psi_a(x)=1$ for $|x|\le a+3$, $\psi_a(x)=0$ for $|x|\ge a+4$. Now Theorem 2.1 of \cite{kn:V4} applied to the function $\psi_a f$ (with $h=1$) leads to the estimate
\begin{equation}\label{eq:4.10}
\begin{split}
\|\psi_a f\|_{H^1(\Omega)}&\lesssim \|(P-\lambda^2\pm i\varepsilon)(\psi_a f)\|_{L^2(\Omega)}\\
&\lesssim \|\psi_a g\|_{L^2(\Omega)}+\|[\Delta,\psi_a]f\|_{L^2(\Omega)}\lesssim \|\psi_a g\|_{L^2(\Omega)}
+\|f\|_{H^1(B_{a+4}\setminus B_{a+3})}.
\end{split}
\end{equation}
Let $1/2<s<\min\{1,\rho/2\}$. We now use the estimate
(\ref{eq:2.5}) with $f$ replaced by $e^{\tau\varphi}(1-\widetilde\psi_a)f$. 
Since $h^{-1}\le \tau+\lambda_1$, we obtain the estimate
\begin{equation}\label{eq:4.11}
\begin{split}
&\|\langle x\rangle^{-s}e^{\tau\varphi}(1-\widetilde\psi_a)f\|_{H^1(\mathbb{R}^d)}\le
Ch^{-1}\|\langle x\rangle^{-s}e^{\tau\varphi}(1-\widetilde\psi_a)f\|_{H^1_h(\mathbb{R}^d)}\\
&\le C\tau^{-1/2}\|\langle x\rangle^{s}e^{\tau\varphi}
(-\Delta-\lambda^2\pm i\varepsilon)(1-\widetilde\psi_a)f\|_{L^2(\mathbb{R}^d)}
+C_\tau\varepsilon^{1/2}\|e^{\tau\varphi}(1-\widetilde\psi_a)f\|_{L^2(\mathbb{R}^d)}\\
&\le C\tau^{-1/2}\|\langle x\rangle^{s}e^{\tau\varphi}
(-\Delta+V-\lambda^2\pm i\varepsilon)(1-\widetilde\psi_a)f\|_{L^2(\mathbb{R}^d)}\\
&+C\tau^{-1/2}\|\langle x\rangle^{-s}e^{\tau\varphi}(1-\widetilde\psi_a)f\|_{L^2(\mathbb{R}^d)}+
C_\tau\varepsilon^{1/2}\|e^{\tau\varphi}(1-\widetilde\psi_a)f\|_{L^2(\mathbb{R}^d)}.
\end{split}
\end{equation}
Hereafter $C>0$ denotes a constant, independent of $\tau$, which may change from line to line,
while $C_\tau>0$ denotes a constant, depending on $\tau$, 
which may change from line to line and whose precise value is not important in the analysis
that follows. 
Taking $\tau$ large enough we can absorb the second term in the right-hand side of (\ref{eq:4.11}) to obtain
\begin{equation}\label{eq:4.12}
\begin{split}
&\|\langle x\rangle^{-s}e^{\tau\varphi}(1-\widetilde\psi_a)f\|_{H^1(\mathbb{R}^d)}\le
 C\|\langle x\rangle^{s}e^{\tau\varphi}(1-\widetilde\psi_a)g\|_{L^2(\mathbb{R}^d)}\\
&+C\|\langle x\rangle^{s}e^{\tau\varphi}[\Delta,\widetilde\psi_a]f\|_{L^2(\mathbb{R}^d)}
+C_\tau\varepsilon^{1/2}\|e^{\tau\varphi}(1-\widetilde\psi_a)f\|_{L^2(\mathbb{R}^d)}\\
&\le C\|\langle x\rangle^{s}g\|_{L^2(\Omega)}+Ce^{\tau\varphi(a+2)}\|f\|_{H^1(B_{a+2}\setminus B_{a+1})}
+C_\tau \varepsilon^{1/2}\|f\|_{L^2(\Omega)}.
\end{split}
\end{equation}
In particular, (\ref{eq:4.12}) implies
\begin{equation}\label{eq:4.13}
\begin{split}
&e^{\tau\varphi(a+3)}\|f\|_{H^1(B_{a+4}\setminus B_{a+3})}\\
&\le C\|\langle x\rangle^{s}g\|_{L^2(\Omega)}+Ce^{\tau\varphi(a+2)}\|f\|_{H^1(B_{a+2}\setminus B_{a+1})}
+C_\tau\varepsilon^{1/2}\|f\|_{L^2(\Omega)}\\
&\le
 C\|\langle x\rangle^{s}g\|_{L^2(\Omega)}+Ce^{\tau\varphi(a+2)}\|f\|_{H^1(B_{a+4}\setminus B_{a+3})}
+C_\tau\varepsilon^{1/2}\|f\|_{L^2(\Omega)},
\end{split}
\end{equation}
where we have also used (\ref{eq:4.10}). Since $\varphi(a+3)-\varphi(a+2)>0$ is independent of $\tau$, we can absorb the second term in the right-hand side of 
(\ref{eq:4.13}) by taking $\tau$ large enough. We now fix $\tau$. Thus we obtain 
\begin{equation}\label{eq:4.14}
\|f\|_{H^1(B_{a+4}\setminus B_{a+3})}\lesssim
 \|\langle x\rangle^{s}g\|_{L^2(\Omega)}
+\varepsilon^{1/2}\|f\|_{L^2(\Omega)}.
\end{equation}
Combining (\ref{eq:4.10}), (\ref{eq:4.12}) and (\ref{eq:4.14}) leads to
\begin{equation}\label{eq:4.15}
\|\langle x\rangle^{-s}f\|_{H^1(\Omega)}\lesssim
\|\langle x\rangle^{s}g\|_{L^2(\Omega)}
+\varepsilon^{1/2}\|f\|_{L^2(\Omega)}.
\end{equation}
On the other hand, the symmetry of the operator $P$ on the Hilbert space $L^2(\Omega)$
gives
\begin{equation}\label{eq:4.16} 
\begin{split}
\varepsilon\|f\|_{L^2(\Omega)}^2&= \left | \Im \langle (P- \lambda^2 \pm i \varepsilon)f, f \rangle_{L^2(\Omega)} \right |\\
&\le \left|\left\langle \langle x\rangle^{s}g,\langle x\rangle^{-s}f\right\rangle_{L^2(\Omega)}\right|\\
&\le \gamma\|\langle x\rangle^{-s}f\|_{L^2(\Omega)}^2+
\gamma^{-1}\|\langle x\rangle^{s}g\|_{L^2(\Omega)}^2
\end{split}
\end{equation}
for every $\gamma>0$. Combining (\ref{eq:4.15}), (\ref{eq:4.16}) and taking $\gamma$ small enough we obtain the estimate
\begin{equation}\label{eq:4.17}
\|\langle x\rangle^{-s}f\|_{H^1(\Omega)}\lesssim
\|\langle x\rangle^{s}g\|_{L^2(\Omega)},
\end{equation}
which implies (\ref{eq:4.2}) as well as (\ref{eq:4.4}) for $\delta\le\lambda\le\lambda_1$ and $|\alpha|\le 1$, $\beta=0$. 
For $|\alpha|\le 1$, $|\beta|\le 1$ the estimate (\ref{eq:4.4}) follows from the coercivity of the operator $\widetilde P$,
while (\ref{eq:4.2}) follows from (\ref{eq:4.4}) and the identities (\ref{eq:4.9}).
\eproof

Like in the previous section, we develop the meromorphic continuation of the operator $\mu (P - \lambda^2)^{-1} \mu : L^2(\Omega) \to L^2(\Omega)$, and establish resolvent bounds crucial for obtaining the wave decay in Section \ref{time decay section}.

\begin{Theorem} \label{4.2}
Assume the conditions (\ref{eq:1.3}) and (\ref{eq:1.4}) fulfilled. Then, given any $\delta>0$ and any integer $k\ge 0$, the bound
\begin{equation}\label{eq:4.18}
\left\|\frac{d^k}{d\lambda^k}\left(\mu\nabla^{\ell}(P-\lambda^2)^{-1}\mu\right)\right\|
\le C^{k+1}k!(|\lambda|+1)^{\ell-1}
\end{equation}
holds for all $\lambda\in\R$, $|\lambda|\ge\delta$, with a constant $C=C_\delta>0$, where $\ell\in\{0,1\}$.
If $d$ is odd and the condition \eqref{eq:1.6} is assumed, the bound \eqref{eq:4.18} holds for all $\lambda\in\R$.   
\end{Theorem}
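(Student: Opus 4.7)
The plan is to follow the two-step structure of the proof of Theorem~\ref{3.3}: first I would establish an analog of Proposition~\ref{3.2}, namely that $\mu\nabla^{\ell}(P-\lambda^2)^{-1}\mu$ extends analytically from $\mathbb{C}^-$ into a strip $\mathcal{L}_\gamma$ with
\begin{equation*}
\|\mu\nabla^{\ell}(P-\lambda^2)^{-1}\mu\|\le C(|\lambda|+1)^{\ell-1}
\end{equation*}
for $\lambda\in\mathcal{L}_\gamma$, $|\lambda|\ge\delta$ (and for all $\lambda\in\mathcal{L}_\gamma$ when $d$ is odd and \eqref{eq:1.6} holds). The derivative bound \eqref{eq:4.18} then follows from Lemma~\ref{B.1} applied to this strip-bounded analytic family on the real axis.

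The continuation splits into two subtasks. The first is to continue $\mu(\widetilde P-\lambda^2)^{-1}\mu$ meromorphically from $\mathbb{C}^-$ into a strip $\mathcal{L}_{\gamma_0}$. I would decompose with cutoffs $\eta,\chi\in C_0^\infty(\mathbb{R}^d)$ satisfying $\eta=1$ on $\overline{\mathcal{O}}$ and $\chi=1$ on $\supp\eta$. The compactly-supported piece $\mu\eta(\widetilde P-\lambda^2)^{-1}\eta\mu$ is handled by the meromorphic continuation of $\chi(\widetilde P-\lambda^2)^{-1}\chi$ supplied by the black-box framework used to formulate \eqref{eq:1.4}. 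The remaining mixed and far-field pieces are treated via the identities \eqref{eq:4.5} and \eqref{eq:4.6}, which trade $(1-\eta)(\widetilde P-\lambda^2)^{-1}$ for a $(P_0-\lambda^2)^{-1}$ piece plus a compactly-cutoff $(\widetilde P-\lambda^2)^{-1}$ piece; sandwiched with the exponential weight $\mu$, the former inherits an analytic continuation into a strip from Appendix~\ref{free resolv appendix} (specifically Lemma~\ref{C.2}), and the latter is again controlled by the black-box continuation.

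For the second subtask I would pass from $\widetilde P$ to $P=\widetilde P+V$ via the telescoping identity
\begin{equation*}
(P-\lambda^2)^{-1}-(P-z^2)^{-1}=L^\sharp(z)\bigl((\widetilde P-\lambda^2)^{-1}-(\widetilde P-z^2)^{-1}\bigr)L^\flat(\lambda),
\end{equation*}
with $L^\sharp=I-(P-z^2)^{-1}V$ and $L^\flat=I-V(P-\lambda^2)^{-1}$. Since $b\equiv 0$ in case b), this is markedly simpler than in Proposition~\ref{3.2}. Conjugating by $\mu$ and using that $\mu^{-1}V\mu^{-1}$ is bounded on $L^2(\Omega)$ by virtue of \eqref{eq:1.3}, one obtains a Fredholm-type equation
\begin{equation*}
\mu(P-\lambda^2)^{-1}\mu=F_1(\lambda,z)+F_2(\lambda,z)\mu(P-\lambda^2)^{-1}\mu,
\end{equation*}
where $F_2$ is a meromorphic, compact-operator-valued family on $\mathcal{L}_{\gamma_0}$ that vanishes at $\lambda=z$. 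Analytic Fredholm supplies $(I-F_2)^{-1}$ as a meromorphic inverse; choosing $z=\Re\lambda$ with $|\Re\lambda|\ge\delta$, Theorem~\ref{4.1} bounds $\|L^\sharp\|,\|L^\flat\|\lesssim 1$, while the $(\lambda^2-z^2)$ prefactor hidden in the resolvent difference supplies an $O(\Im\lambda)$ factor, forcing $\|F_2\|\le 1/2$ in a smaller strip $\mathcal{L}_\gamma$. This yields the analytic continuation and the bound $C(|\lambda|+1)^{-1}$ for $\ell=0$; the $\ell=1$ case is recovered by combining the $\ell=0$ bound with the second resolvent identity and the known gradient bound for the free resolvent. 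In odd dimensions, hypothesis \eqref{eq:1.6} guarantees that $\lambda=0$ is neither a pole of $(I-F_2)^{-1}$ nor of $F_1$, so the continuation extends to all of $\mathcal{L}_\gamma$.

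The main obstacle is verifying $\|F_2(\lambda,z)\|\le 1/2$ uniformly over the full substrip $\{|\Re\lambda|\ge\delta\}\cap\mathcal{L}_\gamma$. This requires the high-frequency gain of order $\lambda^{-1}$ from Theorem~\ref{4.1} for $L^\sharp$ and $L^\flat$ to combine cleanly with the strip continuation of $\mu\bigl((\widetilde P-\lambda^2)^{-1}-(\widetilde P-z^2)^{-1}\bigr)\mu$ and the exponential decay of $V$, without picking up $\lambda$-dependent losses that would rule out the required smallness.
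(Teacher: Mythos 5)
Your overall scheme (meromorphically continue a weighted resolvent into a strip, set up a Fredholm equation whose compact part vanishes at $\lambda=z=\Re\lambda$ and is $O(\Im\lambda)$ small, then get \eqref{eq:4.18} from Lemma \ref{B.1}) is the right one, and your telescoping identity with $L^\sharp,L^\flat$ is correct. But there is a genuine gap exactly at the point you flag as "the main obstacle," and it is not a technical verification but a missing ingredient: your second step needs a \emph{quantitative} bound
$\left\|\mu\bigl((\widetilde P-\lambda^2)^{-1}-(\widetilde P-z^2)^{-1}\bigr)\mu\right\|\lesssim |\Im\lambda|\,(|z|+1)^{-1}$
uniformly on the substrip $\{|\Re\lambda|\ge\delta,\ 0<\Im\lambda\le\gamma\}$ (and a uniform bound on $\mu(\widetilde P-\lambda^2)^{-1}\mu$ there), in order to make $\|F_2\|\le 1/2$ and to keep $F_1$ of size $(|\lambda|+1)^{-1}$. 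Your first subtask, as described, only provides \emph{meromorphic} continuation of $\mu(\widetilde P-\lambda^2)^{-1}\mu$ via the black-box continuation of $\chi(\widetilde P-\lambda^2)^{-1}\chi$ plus \eqref{eq:4.5}--\eqref{eq:4.6} and Lemma \ref{C.2}. Meromorphy gives neither a resonance-free substrip nor norm bounds; the hypothesis \eqref{eq:1.4} and Theorem \ref{4.1} are statements on the real axis ($\varepsilon\downarrow 0$ limiting absorption) and do not by themselves propagate into $\Im\lambda>0$. So the smallness of $F_2$ rests on a strip estimate for the obstacle resolvent $\widetilde P$ that you have not established and that is of the same nature as the theorem you are trying to prove.

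The way out is essentially to prove that strip estimate for $\widetilde P$ by the same Fredholm mechanism you use in step 2 — i.e., compare $\widetilde P$ (or directly $P$) with the free Laplacian $P_0$ on $\R^d$, whose weighted resolvent has an explicit analytic continuation with bounds in a strip (Lemma \ref{C.2}) and whose difference from its real-axis value is $O(\Im\lambda)(|\lambda|+1)^{\ell-1}$ by \eqref{eq:C.6}. This is what the paper does: using a cutoff $\eta$ equal to $1$ near $\mathcal{O}$ and the identities \eqref{eq:4.19}--\eqref{eq:4.20}, it writes $(I+K(\lambda,z))\mu(P-\lambda^2)^{-1}\mu$ in terms of the real-axis resolvent $\mu(P-z^2)^{-1}\mu$ and free-resolvent differences, where $K(\lambda,z)$ is compact, vanishes at $\lambda=z$, and satisfies $\|K\|\lesssim\Im\lambda$ by Theorem \ref{4.1} and \eqref{eq:C.6}; the strip bound is thus transferred from the real axis in one step, with the potential $V$ absorbed into the commutator terms $[\Delta,\eta]-(1-\eta)V$. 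Once you do this for $\widetilde P$, your extra perturbative step adding $V$ becomes redundant — you may as well treat $-\Delta+V$ with Dirichlet conditions directly, as the paper does.
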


{\it Proof.} We follow the same strategy as in the proof of Proposition \ref{3.2}. 
Let $\eta\in C^\infty(\mathbb{R}^d)$ be of compact support such that
$\eta =1$ on $\mathcal{O}$.  For $\lambda\in\mathbb{C}^-$ we have
 \begin{equation*}
(P_0-\lambda^2)(1-\eta)(P-\lambda^2)^{-1}=([\Delta,\eta]-(1-\eta)V)(P-\lambda^2)^{-1}+1-\eta, \qquad \text{on } L^2(\Omega),
 \end{equation*}
which implies
\begin{equation}\label{eq:4.19}
(1-\eta)(P-\lambda^2)^{-1} =(P_0-\lambda^2)^{-1}([\Delta,\eta]-(1-\eta)V)(P-\lambda^2)^{-1}+(P_0-\lambda^2)^{-1}(1-\eta).
\end{equation}
Let $z\in\mathbb{C}^-$. Similarly,
\begin{equation}\label{eq:4.20}
\begin{split}
(P&-z^2)^{-1}(1-\eta)\\
&=(P-z^2)^{-1}([\Delta,\eta]-(1-\eta)V)(P_0-z^2)^{-1}+(1-\eta)(P_0-z^2)^{-1}, \qquad \text{on } L^2(\R^d).
\end{split}
\end{equation}
In view of (\ref{eq:4.19}) and (\ref{eq:4.20}),
 \begin{equation*}
 \begin{split}
(P-\lambda^2)^{-1}-(P-z^2)^{-1}&=(\lambda^2-z^2)(P-z^2)^{-1}(P-\lambda^2)^{-1}\\
&=(\lambda^2-z^2)(P-z^2)^{-1}\eta(2-\eta)(P-\lambda^2)^{-1}\\
&+(\lambda^2-z^2)(P-z^2)^{-1}(1-\eta)^2(P-\lambda^2)^{-1}\\
&=(\lambda^2-z^2)(P-z^2)^{-1}\eta(2-\eta)(P-\lambda^2)^{-1}\\
&+(1-\eta+(P-z^2)^{-1}([\Delta,\eta]-(1-\eta)V))((P_0-\lambda^2)^{-1}\\
&-(P_0-z^2)^{-1})(1-\eta+([\Delta,\eta]-(1-\eta)V)(P-\lambda^2)^{-1}).
\end{split}
 \end{equation*}
Multiplying both sides of this identity by $\mu$ we get
\begin{equation}\label{eq:4.21}
\begin{split}
\mu(P-\lambda^2)^{-1}\mu-\mu(P-z^2)^{-1}\mu&=(\lambda^2-z^2)\mu(P-z^2)^{-1}\eta(2-\eta)(P-\lambda^2)^{-1}\mu\\
 &+Q_1(z)(\mu(P_0-\lambda^2)^{-1}\mu-\mu(P_0-z^2)^{-1}\mu)Q_2(\lambda),
 \end{split}
 \end{equation}
 where 
 \begin{align*}
& Q_1(z)=1-\eta+\mu(P-z^2)^{-1}([\Delta,\eta]-(1-\eta)V)\mu^{-1},\\
& Q_2(\lambda)=1-\eta+\mu^{-1}([\Delta,\eta]-(1-\eta)V)(P-\lambda^2)^{-1}\mu.
 \end{align*}
 We rewrite (\ref{eq:4.21}) in the form
 \begin{equation}\label{eq:4.22}
 \begin{split}
 &(I-K(\lambda,z))\mu(P-\lambda^2)^{-1}\mu=\mu(P-z^2)^{-1}\mu\\
 &+Q_1(z)(\mu(P_0-\lambda^2)^{-1}\mu-\mu(P_0-z^2)^{-1}\mu)(1-\eta),
 \end{split}
 \end{equation}
 where the operator
 \begin{equation*}
 \begin{split}
 K(\lambda,z)&=(\lambda^2-z^2)\mu(P-z^2)^{-1}\eta(2-\eta)\mu^{-1}\\
 &+Q_1(z)(\mu(P_0-\lambda^2)^{-1}-\mu(P_0-z^2)^{-1})([\Delta,\eta]-(1-\eta)V)\mu^{-1}
 \end{split}
 \end{equation*}
 sends $L^2(\Omega)$ into $H^1(\Omega)$ and extends analytically in $\lambda\in \mathcal{L}_{\gamma_0}$ in view of Lemma \ref{C.2}.
 Therefore $K(\lambda,z)$ is a family of compact operators on $L^2(\Omega)$, analytic in $\mathcal{L}_{\gamma_0}$. Since
 $K(z,z)\equiv 0$, by the Analytic Fredholm theorem $(I-K(\lambda,z))^{-1}$ exists as a meromorphic in $\mathcal{L}_{\gamma_0}$
 operator-valued function.  
 By (\ref{eq:4.22}) we get that $\mu(P-\lambda^2)^{-1}\mu$
extends meromorphically from $\mathbb{C}^-$ to $\mathcal{L}_{\gamma_0}$. 
 Moreover, the identity (\ref{eq:4.22}) extends to all $\lambda\in \mathcal{L}_{\gamma_0}$ as well as
 to all $z\in\mathbb{R}$, $z\neq 0$.
Let now $0<{\rm Im}\,\lambda<\gamma_0$, $z={\rm Re}\,\lambda$, $|z|\ge\delta$, $0<\delta\ll 1$ being arbitrary. 
It follows from (\ref{eq:4.2}) that
\begin{equation}\label{eq:4.23}
\|Q_1(z)\|\lesssim 1.
\end{equation}
By (\ref{eq:4.2}), (\ref{eq:4.23}) and (\ref{eq:C.6}),
\begin{equation}\label{eq:4.24}
\|K(\lambda,z)\|\lesssim {\rm Im}\,\lambda\le 1/2
\end{equation}
for ${\rm Im}\,\lambda\le\gamma_1$ with some constant $0<\gamma_1<\gamma_0$. Thus, by (\ref{eq:4.22}) and (\ref{eq:4.24}) we obtain that 
$\mu(P-\lambda^2)^{-1}\mu$ extends analytically to 
$\{\lambda\in\mathcal{L}_{\gamma_1},\,|{\rm Re}\,\lambda|\ge\delta\}$. 
In odd dimensions 
 $\mu(P-\lambda^2)^{-1}\mu$
is analytic in $\mathcal{L}_{\gamma_1}$ since the condition (\ref{eq:1.6}) implies that $\lambda = 0$ is not a pole.
Also from \eqref{eq:4.22} it is easy to see that the analog of (\ref{eq:3.7}) is valid in this case, whence (\ref{eq:4.18}) follows from this fact and Lemma \ref{B.1}. 
\eproof

\section{Low-frequency resolvent bounds}
 
 Let $P: L^2(\R^d) \to L^2(\R^d)$ be the self-adjoint operator from Section 3. 
 In this section we will suppose that
 \begin{equation}\label{eq:5.1}
0\le V(x)\le C\langle x\rangle^{-\rho}, \quad |b(x)|\le C\langle x\rangle^{-\rho},
\end{equation}
 with constants $C>0$, $\rho>\max\{3,\frac{d}{2}\}$. We have the following
 
 \begin{Theorem} \label{5.1}
 Let $d\ge 5$ and assume the condition (\ref{eq:5.1}) is fulfilled. 
If $s>1$, we have the low-frequency estimate
\begin{equation}\label{eq:5.2}
\left\|\langle x\rangle^{-s}\nabla^{\ell}(P-\lambda^2\pm i\varepsilon)^{-1}\langle x\rangle^{-s}\right\|
\le C,\quad 0<\lambda\le\delta_0,\,0<\varepsilon \le \varepsilon_0,
\end{equation}
 with constants $0<\delta_0, \, \varepsilon_0 \ll 1$, $C>0$ independent of $\lambda$ and $\varepsilon$, where $\ell\in\{0,1\}$.
 \end{Theorem}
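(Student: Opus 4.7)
The plan is to establish \eqref{eq:5.2} through a low-frequency Fredholm perturbation argument off the free resolvent $R_0(\lambda) = (P_0 - \lambda^2 \pm i\varepsilon)^{-1}$. Following Section \ref{resolv bds mag Schro section}, set $W := \widetilde V + i\nabla\cdot b + ib\cdot\nabla$ with $\widetilde V = V + |b|^2$; the resolvent identity \eqref{eq:3.9} rearranges to
\[
(P - \lambda^2 \pm i\varepsilon)^{-1} = (I + R_0(\lambda) W)^{-1} R_0(\lambda),
\]
so it suffices to invert $I + R_0(\lambda) W$ uniformly on an appropriate weighted Sobolev space. Fix $s$ with $1 < s < \min\{\rho/2,\, d/2\}$, which is possible since $\rho > 3$ and $d \ge 5$; the theorem for arbitrary $s > 1$ reduces to this choice by the trivial inequality $\langle x\rangle^{-s} \le \langle x\rangle^{-s_0}$ for $s \ge s_0$. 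Set $\mathcal{H} := \langle x\rangle^{s} H^1(\R^d)$, the space of $u$ with $\langle x\rangle^{-s} u \in H^1(\R^d)$.

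The preparatory step is to record the classical low-frequency free resolvent estimates (Appendix \ref{free resolv appendix}): for $d \ge 3$,
\[
\sum_{\ell = 0}^{1} \|\langle x\rangle^{-s} \nabla^{\ell} R_0(\lambda) \langle x\rangle^{-s}\| + \|\langle x\rangle^{-s} R_0(\lambda) \nabla \langle x\rangle^{-s}\| \le C
\]
uniformly for $0 < \lambda \le \delta$, $0 < \varepsilon < 1$, together with norm-continuity of $R_0(\lambda)$ as a map $\langle x\rangle^{-s} L^2 \to \mathcal{H}$ as $(\lambda, \varepsilon) \to (0, 0)$. Combined with $s < \rho/2$ and the decay hypothesis \eqref{eq:5.1}, these bounds show $K(\lambda) := R_0(\lambda) W$ is a bounded operator on $\mathcal{H}$, norm-continuous in $(\lambda, \varepsilon)$, and compact (approximate $W$ by its truncation to a large ball and apply Rellich).

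The crux is injectivity of $I + K(0)$ on $\mathcal{H}$. Suppose $u \in \mathcal{H}$ satisfies $u = -R_0(0) W u$; then $Pu = 0$ in the sense of distributions on $\R^d$. Because $Wu \in \langle x\rangle^{-s} L^2$ with $s > 1$, H\"older's inequality gives $Wu \in L^{2d/(d+2)}$, and the Hardy-Littlewood-Sobolev inequality (valid for $d \ge 5$) yields $R_0(0) Wu \in L^2$; hence $u \in L^2$. A harmonic additive correction is ruled out because Liouville's theorem forbids nontrivial harmonic functions in $\mathcal{H}$ when $s < d/2$. With $u \in L^2$ and $-\Delta u = -W u \in L^2$, we have $u \in D(P)$, and pairing against $u$ yields
\[
0 = \langle Pu, u\rangle_{L^2} = \|(i\nabla + b) u\|_{L^2}^2 + \langle V u, u\rangle_{L^2}.
\]
Nonnegativity of $V$ forces both terms to vanish. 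From $(i\nabla + b) u = 0$, a direct computation gives $\nabla |u|^2 = \bar u(-ibu) + u(ib\bar u) = 0$, so $|u|$ is constant; since $u \in L^2$, $u \equiv 0$.

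By the Fredholm alternative, $I + K(0)$ is invertible on $\mathcal{H}$, and norm-continuity of $K(\lambda)$ produces a uniform bound on $(I + K(\lambda))^{-1}$ in a neighborhood of $(\lambda, \varepsilon) = (0,0)$, shrinking $\delta$ if necessary. Composing with $R_0(\lambda)\langle x\rangle^{-s} : L^2 \to \mathcal{H}$ on the right and $\langle x\rangle^{-s}\nabla^{\ell}: \mathcal{H} \to L^2$ on the left then delivers \eqref{eq:5.2}. The main obstacle is the injectivity step: promoting $u \in \mathcal{H}$ with $Pu = 0$ to $u \in L^2$, together with the Liouville exclusion of harmonic corrections, depends essentially on $d \ge 5$; without both $V \ge 0$ and this $L^2$ upgrade, the magnetic-flatness identity $\nabla|u|^2 = 0$ would not close the argument.
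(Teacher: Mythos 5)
Your route is genuinely different from the paper's: the paper proves \eqref{eq:5.2} by a direct quantitative argument (cutoffs $\chi_a$, the magnetic Poincar\'e inequality, positivity of $V$, explicit low-energy kernel bounds for the free resolvent, and absorption by taking $a$ large, which is where $d\ge 5$ and $\rho>3$ enter), with no Fredholm theory and no discussion of zero eigenvalues or resonances. Your compactness-plus-injectivity scheme is a legitimate alternative in principle, but as written it has genuine gaps at exactly the steps you call the crux.

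First, the Lebesgue-exponent bookkeeping in the injectivity step is wrong. The Newtonian potential $R_0(0)$, with kernel $\sim|x-y|^{-(d-2)}$, maps $L^{2d/(d+2)}$ to $L^{2d/(d-2)}$ by Hardy--Littlewood--Sobolev, not to $L^2$; so from $Wu\in L^{2d/(d+2)}$ you only get $u\in L^{2d/(d-2)}$, and the promotion $u\in L^2$ --- which you then use both to place $u$ in the form domain and to kill the constant $|u|$ --- is not established. To land in $L^2$ you need the zero-order part of $Wu$ in $L^{2d/(d+4)}$ (this is where $d\ge5$ really enters, since $2d/(d+4)>1$ exactly for $d>4$), which requires weight decay faster than $\langle x\rangle^{-2}$, i.e.\ a choice $1<s<\rho-2$ (possible because $\rho>3$, but different from your constraint $s<\rho/2$), or else a bootstrap from $L^{2d/(d-2)}$. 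Second, the term $i\nabla\cdot(bu)$ is only a distribution (b is merely $L^\infty$), so the assertion ``$Wu\in\langle x\rangle^{-s}L^2$'' is false as stated; the divergence-form piece must be handled by moving the derivative onto the free kernel ($\partial_jR_0(0)$ has kernel $\sim|x-y|^{1-d}$), and the same issue propagates into your claims that $K(\lambda)=R_0(\lambda)W$ is bounded, compact, and norm-continuous on $\mathcal{H}$: those require uniform bounds and, crucially, norm continuity down to $(\lambda,\varepsilon)=(0,0)$ of operators of the form $\langle x\rangle^{-\sigma}\nabla^{\ell_1}R_0\nabla^{\ell_2}\langle x\rangle^{-\sigma}$, including the $\nabla R_0\nabla$ component; the paper's Appendix (Lemma \ref{C.1}) supplies only uniform bounds \eqref{eq:C.1}--\eqref{eq:C.3}, not continuity, so this ingredient is asserted rather than proved. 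Finally, a small fixable point: the theorem requires all $0<\varepsilon<1$, while your neighborhood argument only covers $\varepsilon$ small; the region $\varepsilon\ge\varepsilon_0$ needs the (easy) trivial resolvent bound, stated separately. Until the $L^2$-promotion and the continuity of $K(\lambda)$ are actually carried out, the proposal does not yet prove \eqref{eq:5.2}.
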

 
 {\it Proof.} Given a function $g \in L^2(\R^d)$ such that $\langle x\rangle^{s}g\in L^2(\R^d)$, set
\begin{equation*}
f=(P-\lambda^2\pm i\varepsilon)^{-1}g.
\end{equation*}
Let $a\gg 1$ be a parameter independent of $\lambda$ and choose $\chi_a\in C_0^\infty(\mathbb{R}^d; [0,1])$
such that $\chi_a(x)=1$ for $|x|\le 3a$, $\chi_a(x)=0$ for $|x|\ge 4a$, and $\partial_x^\alpha\chi_a(x)=O(a^{-|\alpha|})$.

For the rest of the proof $\|\cdot\|$ and $\langle\cdot,\cdot\rangle$ denote the norm and the scalar product in $L^2(\mathbb{R}^d)$. We have
\begin{equation*}
(P-\lambda^2\pm i\varepsilon)(\chi_af)=\chi_ag+[P,\chi_a]f.
\end{equation*}
Hence
\begin{equation*}
\begin{split}
{\rm Re}\,\left\langle \chi_ag+[P,\chi_a]f,\chi_af\right\rangle &={\rm Re}\,\left\langle P\chi_af,\chi_af\right\rangle
-\lambda^2\|\chi_af\|^2\\
&=\|(i\nabla+b)\chi_af\|^2+\left\langle V\chi_af,\chi_af\right\rangle-\lambda^2\|\chi_af\|^2\\
&\ge\|(i\nabla+b)\chi_af\|^2-\lambda^2\|\chi_af\|^2.
\end{split}
\end{equation*}
Thus we obtain
\begin{equation}\label{eq:5.3}
\|(i\nabla+b)\chi_af\|\le (\lambda+\gamma)\|\chi_af\|+\gamma^{-1}\|\chi_ag\|+\gamma^{-1}\|[P,\chi_a]f\|
\end{equation}
for every $\gamma>0$. On the other hand, by the Poincar\'e inequality (\ref{eq:D.1}) we have 
\begin{equation}\label{eq:5.4}
\|\chi_af\|\le Ca\|(i\nabla+b)\chi_af\|.
\end{equation}
We now combine (\ref{eq:5.3}) and (\ref{eq:5.4}). Choosing $\gamma=a^{-1}\gamma_0$ with $\gamma_0>0$ 
small enough independent of $a$, and $\lambda >0$ small enough depending on $a$, we arrive at
\begin{equation}\label{eq:5.5}
a^{-1}\|\chi_af\|+\|(i\nabla+b)\chi_af\|\le 
Ca\|\chi_ag\|+Ca\|[P,\chi_a]f\|.
\end{equation}
On the other hand, using the resolvent identity (\ref{eq:3.9}) we obtain
\begin{equation}\label{eq:5.6}
[P,\chi_a]f=[P,\chi_a](P_0-\lambda^2\pm i\varepsilon)^{-1}g-[P,\chi_a](P_0-\lambda^2\pm i\varepsilon)^{-1}
(\widetilde V+i\nabla\cdot b+ib\cdot\nabla)f.
\end{equation}
Observe now that $[P,\chi_a]$ is supported in $3a\le|x|\le 4a$ and 
\begin{equation*}
[P,\chi_a]=[-\Delta,\chi_a]+2ib\cdot\nabla\chi_a=-\Delta\chi_a-2\nabla\chi_a\cdot\nabla+2ib\cdot\nabla\chi_a
=O(a^{-1})\cdot\nabla+O(a^{-2}).
\end{equation*}
Hence, in view of Lemma \ref{C.1}, given $0<\epsilon\ll 1$ and $h \in L^2(\R^d)$ with $\langle x \rangle^{1 + \epsilon} h \in L^2(\R^d)$, we have (with $|\alpha| \le 1$),
\begin{equation}\label{eq:5.7}
\begin{split}
&\left\|[P,\chi_a](P_0-\lambda^2\pm i\varepsilon)^{-1} \partial_x^\alpha h\right\|\\
&\lesssim 
\sum_{j=0}^1a^{-1+j/2+\epsilon}\left\|\langle x\rangle^{j/2-1-\epsilon}\nabla^j(P_0-\lambda^2
\pm i\varepsilon)^{-1} \partial_x^\alpha \langle x\rangle^{j/2-1-\epsilon}\right\|\|\langle x\rangle^{-j/2+1+\epsilon}h\|\\
&\lesssim \sum_{j=0}^1a^{-1+j/2+\epsilon}\|\langle x\rangle^{-j/2+1+\epsilon}h\|.
\end{split}
\end{equation}
Choose a function $\widetilde\chi_a\in C_0^\infty(\mathbb{R}^d)$
such that $\widetilde\chi_a(x)=1$ for $|x|\le a$, $\widetilde\chi_a(x)=0$ for $|x|\ge 2a$, and $\partial_x^\alpha\widetilde\chi_a(x)=O(a^{-|\alpha|})$. We will now bound the norms of the functions
\begin{align*}
& f_1:=[P,\chi_a](P_0-\lambda^2\pm i\varepsilon)^{-1}
(\widetilde V+i\nabla\cdot b+ib\cdot\nabla)(1-\widetilde\chi_a)f,\\
& f_2:=[P,\chi_a](P_0-\lambda^2\pm i\varepsilon)^{-1}
(\widetilde V+i\nabla\cdot b+ib\cdot\nabla)\widetilde\chi_af.
\end{align*}
We bound the norm of $f_1$ by applying (\ref{eq:5.7}) with $\alpha = 0$ to the function $h = (\widetilde V+ib\cdot\nabla)(1-\widetilde\chi_a)f$, and applying (\ref{eq:5.7}) with $|\alpha| = 1$ to the entries of $b(1 -\tilde \chi_a)f$. We get
\begin{equation}\label{eq:5.8}
\|f_1\|\lesssim a^{1+3\epsilon-\rho}\sum_{j=0}^1\|\langle x\rangle^{-1-\epsilon}\nabla^j((1-\widetilde\chi_a)f)\|.
\end{equation}
To bound the norm of $f_2$ we will use that the kernel of the free resolvent 
$(P_0-\lambda^2\pm i\varepsilon)^{-1}$ is of the form $z^{d-2}E_d^\pm(z|x-y|)$, where $z^2=\lambda^2\mp i\varepsilon$, $\pm{\rm Im}\,z>0$,
and the function 
$E_d^\pm(\zeta)$ is given in terms of the Hankel functions by the formula
\begin{equation}\label{eq:5.9}
E_d^\pm(\zeta)=C_d\zeta^{-\frac{d-2}{2}}H^\pm_{\frac{d-2}{2}}(\zeta).
\end{equation}
It is well-known that
\begin{equation}\label{eq:5.10}
\left|\partial_\zeta^kE_d^\pm(\zeta)\right|\lesssim|\zeta|^{-d+2-k} \quad\mbox{for}\quad |\zeta|\le 1,\quad k=0,1,2.
\end{equation}
Observe now that if $x\in {\rm supp}\,[P,\chi_a]$, $y\in {\rm supp}\,\widetilde\chi_a$, then $a\le|x-y|\le 6a$.
Hence we can arrange $|z||x-y|\le 1$ by taking $\lambda$ smaller if necessary, and by taking $\varepsilon$ small, so that $|z|a = (\lambda^4 + \varepsilon^2)^{1/4} a \ll 1$. Therefore, for such $x$, $y$ and $z$ we derive from 
(\ref{eq:5.10}):
\begin{equation}\label{eq:5.11}
\left|z^{d-2}\partial_x^{j_1}\partial_y^{j_2}E^\pm_d(z|x-y|)\right|\lesssim a^{-d+2-j_1-j_2},
\end{equation}
where $j_1,j_2\in\{0,1\}$. By (\ref{eq:5.11}),
\begin{equation*}
|f_2|\lesssim a^{-d}\sum_{j=0}^1\|\langle x\rangle^{-\rho}\nabla^j(\widetilde\chi_af)\|_{L^1}\lesssim a^{-d}\sum_{j=0}^1\|\nabla^j(\widetilde\chi_af)\|,
\end{equation*}
where we have used that $\langle x\rangle^{-\rho}\in L^2$. Hence
\begin{equation}\label{eq:5.12}
\|f_2\|^2=\int_{3a\le|x|\le 4a}|f_2|^2dx
\lesssim a^{-d}\sum_{j=0}^1\|\nabla^j(\widetilde\chi_af)\|^2.
\end{equation}
By (\ref{eq:5.6}), (\ref{eq:5.7}), (\ref{eq:5.8}), and (\ref{eq:5.12}),
\begin{equation}\label{eq:5.13}
\begin{split}
& \|[P,\chi_a]f\|\lesssim a^{-1/2+\epsilon}\|\langle x\rangle^{1+\epsilon}g\|\\
&+a^{1+3\epsilon-\rho}\sum_{j=0}^1\|\langle x\rangle^{-1-\epsilon}\nabla^j((1-\widetilde\chi_a)f)\|+
a^{-d/2}\sum_{j=0}^1\|\nabla^j(\widetilde\chi_af)\|.
\end{split}
\end{equation}
By (\ref{eq:5.5}) and (\ref{eq:5.13}),
\begin{equation}\label{eq:5.14}
\begin{split}
& a^{-1}\|\chi_af\|+\|(i\nabla+b)\chi_af\|\lesssim a\|\chi_ag\|+a^{1/2+\epsilon}\|\langle x\rangle^{1+\epsilon}g\|\\  
& +a^{2+3\epsilon-\rho}\sum_{j=0}^1\|\langle x\rangle^{-1-\epsilon}\nabla^j((1-\widetilde\chi_a)f)\|+
a^{-d/2+1}\sum_{j=0}^1\|\nabla^j(\chi_af)\|.
\end{split}
\end{equation}
Since $d\ge 5$, we can arrange that $a^{-d/2+1}\ll a^{-1}$. Therefore, taking $a$
big enough we can absorb the last term in the right-hand side of (\ref{eq:5.14}) to obtain
\begin{equation}\label{eq:5.15}
a^{-1}\|\chi_af\|+\|(i\nabla+b)\chi_af\|\lesssim a\|\langle x\rangle^{1+\epsilon}g\|
+a^{2+3\epsilon-\rho}\sum_{j=0}^1\left\|\langle x\rangle^{-1-\epsilon}\nabla^j((1-\widetilde\chi_a)f)\right\|.
\end{equation}
Observe now that the identity (\ref{eq:5.6}) still holds with $[P,\chi_a]$ replaced by $1-\widetilde\chi_a$. Using this 
together with Lemma \ref{C.1}, we get 
\begin{equation*}
\begin{split}
&\sum_{j=0}^1\left\|\langle x\rangle^{-1-\epsilon}\nabla^j((1-\widetilde\chi_a)f)\right\|\lesssim\sum_{j=0}^1\left\|
\langle x\rangle^{-1-\epsilon}\nabla^j(P_0-\lambda^2\pm i\varepsilon)^{-1}\langle x\rangle^{-1-\epsilon}\right\|
\|\langle x\rangle^{1+\epsilon}g\|\\
&+a^{2+2\epsilon-\rho}\sum_{j=0}^1\sum_{\ell_1+\ell_2\le 1}\left\|
\langle x\rangle^{-1-\epsilon}\nabla^j(P_0-\lambda^2\pm i\varepsilon)^{-1}\nabla^{\ell_1}\langle x\rangle^{-1-\epsilon}
\right\|\left\|\langle x\rangle^{-1-\epsilon}\nabla^{\ell_2}((1-\widetilde\chi_a)f)\right\|\\
&+\sum_{j=0}^1\sum_{\ell_1+\ell_2\le 1}\left\|
\langle x\rangle^{-1-\epsilon}\nabla^j(P_0-\lambda^2\pm i\varepsilon)^{-1}\nabla^{\ell_1}\langle x\rangle^{-1-\epsilon}
\right\|\left\|\nabla^{\ell_2}(\widetilde\chi_af)\right\|\\
 &\lesssim\|\langle x\rangle^{1+\epsilon}g\|+a^{2+2\epsilon-\rho}\sum_{\ell=0}^1
\left\|\langle x\rangle^{-1-\epsilon}\nabla^{\ell}((1-\widetilde\chi_a)f)\right\|
+\sum_{\ell=0}^1\left\|\nabla^{\ell}(\widetilde\chi_af)\right\|.
\end{split}
\end{equation*}
Taking $a$ larger as needed we can absorb the second term in the right-hand side of the above inequality to obtain
\begin{equation}\label{eq:5.16}
\sum_{j=0}^1\left\|\langle x\rangle^{-1-\epsilon}\nabla^j((1-\widetilde\chi_a)f)\right\|\lesssim \|\langle x\rangle^{1+\epsilon}g\|+
\sum_{\ell=0}^1\left\|\nabla^{\ell}(\widetilde\chi_af)\right\|.
\end{equation}
By (\ref{eq:5.15}) and (\ref{eq:5.16}),
\begin{equation}\label{eq:5.17}
a^{-1}\|\chi_af\|+\|(i\nabla+b)\chi_af\|\lesssim a\|\langle x\rangle^{1+\epsilon}g\|+
a^{2+3\epsilon-\rho}\sum_{\ell=0}^1\left\|\nabla^{\ell}(\chi_af)\right\|.  
\end{equation}
If $\epsilon$ is small enough we have $a^{2+3\epsilon-\rho}\ll a^{-1}$. Therefore, taking $a$ even bigger if necessary we can absorb the last
term in the right-hand side of (\ref{eq:5.17}) to obtain
\begin{equation}\label{eq:5.18}
a^{-1}\|\chi_af\|+\|(i\nabla+b)\chi_af\|\lesssim a\|\langle x\rangle^{1+\epsilon}g\|.  
\end{equation}
With $a$ now fixed we combine (\ref{eq:5.16}) and (\ref{eq:5.18}) to conclude
\begin{equation}\label{eq:5.19}
\sum_{j=0}^1\left\|\langle x\rangle^{-1-\epsilon}\nabla^jf\right\|\lesssim \|\langle x\rangle^{1+\epsilon}g\|,
\end{equation}
 which clearly implies (\ref{eq:5.2}).
\eproof

Let now $\mathcal{O}\subset\mathbb{R}^d$ be a bounded domain with smooth boundary such that 
$\Omega=\mathbb{R}^d\setminus\mathcal{O}$ is connected. In what follows in this section we will prove the following

 \begin{Theorem} \label{5.2} 
 The conclusions of Theorem \ref{5.1} remain valid for the 
 Dirichlet self-adjoint realization (which again will be denoted by $P$) of the operator $-\Delta+V :L^2(\Omega) \to L^2(\Omega)$, where $V$ satisfies the condition (\ref{eq:5.1}) in $\Omega$.
\end{Theorem}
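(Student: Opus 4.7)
The plan is to reprise the proof of Theorem \ref{5.1}, exploiting the simplification $b\equiv 0$ and introducing auxiliary cutoffs that stay away from the obstacle when invoking the free resolvent $(P_0-\lambda^2\pm i\varepsilon)^{-1}$ on $\R^d$. The argument will go through largely verbatim provided the cutoffs are chosen so that the commutators they generate live in a region where $f$ can safely be extended by zero across $\partial\Omega$.

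First I would fix $a\gg 1$ so that $\mathcal{O}\subset B_a$ and take $\chi_a\in C_0^\infty(\R^d)$ exactly as in Theorem \ref{5.1}. Setting $f=(P-\lambda^2\pm i\varepsilon)^{-1}g$, the product $\chi_a f$ lies in $H^1_0(\Omega\cap B_{4a})$ since $\chi_a\equiv 1$ on a neighborhood of $\overline{\mathcal{O}}$; Poincar\'e's inequality on this bounded set therefore gives $\|\chi_af\|\lesssim a\|\nabla(\chi_af)\|$. Combining this with the real part of $\langle(P-\lambda^2\pm i\varepsilon)(\chi_af),\chi_af\rangle_{L^2(\Omega)}$ and using $V\ge 0$ yields the analogue of (\ref{eq:5.5}),
\begin{equation*}
a^{-1}\|\chi_af\|_{L^2(\Omega)}+\|\nabla(\chi_af)\|_{L^2(\Omega)}\lesssim a\|\chi_ag\|_{L^2(\Omega)}+a\|[P,\chi_a]f\|_{L^2(\Omega)}.
\end{equation*}
To bound the commutator I next introduce $\widetilde\chi_a\in C_0^\infty(\R^d)$ with $\widetilde\chi_a\equiv 1$ on $B_a$ and $\widetilde\chi_a\equiv 0$ outside $B_{2a}$. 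Because $\widetilde\chi_a\equiv 1$ near $\mathcal{O}$, the extension of $u\defeq(1-\widetilde\chi_a)f$ by zero lies in $H^2(\R^d)$, and a direct computation from $(P-\lambda^2\pm i\varepsilon)f=g$ on $\Omega$ gives
\begin{equation*}
(P_0-\lambda^2\pm i\varepsilon)u=(1-\widetilde\chi_a)g-(1-\widetilde\chi_a)Vf+[\Delta,\widetilde\chi_a]f.
\end{equation*}
Since $\supp[P,\chi_a]\subset\{3a\le|x|\le 4a\}$ is disjoint from $\supp\widetilde\chi_a$, we have $[P,\chi_a]f=[P,\chi_a]u$, so applying $(P_0-\lambda^2\pm i\varepsilon)^{-1}$ to the identity above produces an analogue of (\ref{eq:5.6}) whose three contributions can be handled via Lemma \ref{C.1} and the Hankel-kernel bounds (\ref{eq:5.10})--(\ref{eq:5.11}) exactly as in (\ref{eq:5.7}), (\ref{eq:5.8}) and (\ref{eq:5.12}).

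The only genuinely new term is the one coming from $[\Delta,\widetilde\chi_a]f$, but $\supp\nabla\widetilde\chi_a\subset\{a\le|x|\le 2a\}\subset\{\chi_a\equiv 1\}$, so it is dominated pointwise by $a^{-1}|\nabla(\chi_af)|+a^{-2}|\chi_af|$ and is absorbed by the left-hand side of the energy estimate once $a$ is chosen large enough. The remaining bootstrap (\ref{eq:5.14})--(\ref{eq:5.18}) then adapts verbatim: one estimates $\|\langle x\rangle^{-1-\epsilon}\nabla^ju\|$ by iterating the same free-resolvent identity and absorbs the tails using $d\ge 5$ (so $a^{-d/2+1}\ll a^{-1}$) together with $\rho>3$ (so $a^{2+3\epsilon-\rho}\ll a^{-1}$). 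The main obstacle I anticipate is the careful bookkeeping of the cross-cutoff contributions and checking that extending $(1-\widetilde\chi_a)f$ by zero across $\partial\Omega$ produces no singular boundary term (which it does not, thanks to $\widetilde\chi_a\equiv 1$ in a neighborhood of $\overline{\mathcal{O}}$); once that is done, (\ref{eq:5.2}) for the Dirichlet realization follows exactly as in the free-space case.
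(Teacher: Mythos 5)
Your setup (energy identity with Green's formula, Poincar\'e on the support of $\chi_a$, extension of $(1-\widetilde\chi_a)f$ by zero across $\partial\Omega$) is fine, but the treatment of the new commutator term has a genuine quantitative gap. The quantity you must control in the analogue of (\ref{eq:5.5}) is $a\,\|[\Delta,\chi_a](P_0-\lambda^2\pm i\varepsilon)^{-1}[\Delta,\widetilde\chi_a]f\|$, and here \emph{both} cutoffs live at scale $a$: the coefficients of $[\Delta,\widetilde\chi_a]$ are $O(a^{-1})\nabla+O(a^{-2})$ but are supported on an annulus of volume $\sim a^d$, while the kernel bounds (\ref{eq:5.10})--(\ref{eq:5.11}) only give $|\nabla_x^{j_1}\nabla_y^{j_2}K|\lesssim a^{-d+2-j_1-j_2}$ between the two annuli. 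Carrying out the $L^1$--$L^2$ estimate as in (\ref{eq:5.12}) you get
\begin{equation*}
a\left\|[\Delta,\chi_a](P_0-\lambda^2\pm i\varepsilon)^{-1}[\Delta,\widetilde\chi_a]f\right\|
\lesssim \|\nabla(\chi_af)\|+a^{-1}\|\chi_af\|,
\end{equation*}
i.e.\ a term of \emph{exactly} the same order as the left-hand side of your energy estimate, with a constant that does not shrink as $a\to\infty$ (the powers of $a$ cancel identically, and for the kernel $\sim|x-y|^{2-d}$ at low frequency this is sharp). So the claim that this term ``is absorbed by the left-hand side once $a$ is chosen large enough'' fails; there is no smallness left to absorb it, in contrast to the $f_2$-term of Theorem \ref{5.1}, whose smallness $a^{-d/2}$ came from the fixed $L^2$ weight $\langle x\rangle^{-\rho}$ sitting between the resolvent and $\widetilde\chi_af$.

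The paper avoids this by never commuting with the $a$-dependent cutoff when passing to the free resolvent: it introduces a \emph{fixed} cutoff $\eta$, independent of $a$, with $\eta\equiv 1$ near $\mathcal{O}$, and uses the identities (\ref{eq:5.23})--(\ref{eq:5.25}), so that the perturbation fed into $(P_0-\lambda^2\pm i\varepsilon)^{-1}$ is $[\Delta,\eta]-(1-\eta)V$: the commutator $[\Delta,\eta]$ has $a$-independent compact support and $(1-\eta)V$ is an $a$-independent $L^2$ weight, so the $f_1$/$f_2$ splitting by $\widetilde\chi_a$ and the estimates (\ref{eq:5.7}), (\ref{eq:5.8}), (\ref{eq:5.12}) go through verbatim, producing the factors $a^{1+3\epsilon-\rho}$ and $a^{-d/2}$ that are absorbable for $\rho>3$ and $d\ge5$. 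Your argument can be repaired essentially by making this replacement (keep $\widetilde\chi_a$ only for the near/far splitting of $f$, and use a fixed $\eta$ for the extension identity), or alternatively by decoupling the two scales (outer cutoff at scale $Ma$ with $M$ large but fixed, which regains a factor $M^{-d/2-1}$); as written, however, the absorption step is not justified.
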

 
 {\it Proof.} We will adapt the proof of Theorem \ref{5.1} to this case and will keep the same notations. In this follows 
 $\|\cdot\|$ and $\langle\cdot,\cdot\rangle$ will denote the norm and the scalar product on $L^2(\Omega)$. We take the parameter
 $a$ big enough so that $\chi_a=1$ on $\mathcal{O}$. 
 In this case the function 
 \begin{equation*}
 f=(P-\lambda^2\pm i\varepsilon)^{-1}g
 \end{equation*}
 satisfies the equation 
\begin{equation*}
(P-\lambda^2\pm i\varepsilon)(\chi_af)=\chi_ag-[\Delta,\chi_a]f
\end{equation*}
in $\Omega$ and $\chi_a f|_{\partial\Omega}=0$. 
Hence, by the Green formula, 
\begin{equation*}
\begin{split}
{\rm Re}\,\left\langle \chi_ag-[\Delta,\chi_a]f,\chi_af\right\rangle &={\rm Re}\,\left\langle P\chi_af,\chi_af\right\rangle
-\lambda^2\|\chi_af\|^2\\
&=\|\nabla(\chi_af)\|^2+\left\langle V\chi_af,\chi_af\right\rangle-\lambda^2\|\chi_af\|^2\\
&\ge\|\nabla(\chi_af)\|^2_{L^2}-\lambda^2\|\chi_af\|^2.
\end{split}
\end{equation*}
Thus we obtain the inequality
\begin{equation}\label{eq:5.20}
\|\nabla(\chi_af)\|\le (\lambda+\gamma)\|\chi_af\|+\gamma^{-1}\|\chi_ag\|+\gamma^{-1}\|[\Delta,\chi_a]f\|
\end{equation}
for every $\gamma>0$. On the other hand, by the Poincar\'e inequality (\ref{eq:D.2}), we have
\begin{equation}\label{eq:5.21}
\|\chi_af\|\le Ca\|\nabla(\chi_af)\|.
\end{equation}
Choosing $\gamma=a^{-1}\gamma_0$ with $\gamma_0>0$ 
small enough independent of $a$ and 
$\lambda$ small enough, we obtain from the above inequalities the estimate
\begin{equation}\label{eq:5.22}
a^{-1}\|\chi_af\|+\|\nabla(\chi_af)\|\le 
Ca\|\chi_ag\|+Ca\|[\Delta,\chi_a]f\|.
\end{equation}
On the other hand, by the resolvent identity (\ref{eq:4.12}) we have
\begin{equation}\label{eq:5.23}
(1-\eta)f=(P_0-\lambda^2\pm i\varepsilon)^{-1}(1-\eta)g+(P_0-\lambda^2\pm i\varepsilon)^{-1}([\Delta,\eta]-(1-\eta)V)f.
\end{equation}
Hence, if $a$ is big enough, we have
\begin{equation}\label{eq:5.24}
[\Delta,\chi_a]f=[\Delta,\chi_a](P_0-\lambda^2\pm i\varepsilon)^{-1}(1-\eta)g+
[\Delta,\chi_a](P_0-\lambda^2\pm i\varepsilon)^{-1}([\Delta,\eta]-(1-\eta)V)f,
\end{equation}
\begin{equation}\label{eq:5.25}
(1-\widetilde\chi_a)f=(1-\widetilde\chi_a)(P_0-\lambda^2\pm i\varepsilon)^{-1}(1-\eta)g+(1-\widetilde\chi_a)(P_0-\lambda^2\pm i\varepsilon)^{-1}([\Delta,\eta]-(1-\eta)V)f.
\end{equation}
With these formulas in hands, the proof now is exactly the same as the proof of Theorem \ref{5.1}.
Therefore we omit the details.
\eproof

\section{Time decay estimates} \label{time decay section}

In this section we use Theorems \ref{3.3} and \ref{4.2} to prove Theorem \ref{1.1} for our self-adjoint operator
\begin{equation*}
P = (i\nabla + b)^2 + V : L^2(\Omega) \to L^2(\Omega).
\end{equation*}
Recall that we consider the two cases a) and b) as described in Section \ref{s1}. We will treat these cases separately as necessary. Throughout, we suppose $P \ge 0$, for which $V \ge 0$ suffices. Furthermore, we assume $b$ and $V$ obey (\ref{eq:1.3}) and that (\ref{eq:1.4}) holds for our domain $\Omega$.

In view of Lemma \ref{E.1}, given an integer $m\gg 1$
there is a real-valued function $\rho_m\in C_0^\infty(\R)$, $\rho_m\ge 0$, $\int_{-\infty}^\infty\rho_m(\sigma)d\sigma=1$, such that $\rho_m(\sigma)=0$ for $\sigma\le 1$ and $\sigma\ge 2+\ln m$, and 
 \begin{equation}\label{eq:6.1}
\left|\partial_\sigma^k\rho_m(\sigma)\right|\le C^{k+1}k!,\quad\forall\sigma\in \R,
\end{equation}
for all integers $0\le k\le m$  with a constant $C>0$ independent of $k$ and $m$. Given any $\delta>0$, set
\begin{equation*}
\psi_m(\lambda)=\int_{-\infty}^{\lambda/\delta} \rho_m(\sigma)d\sigma,
\end{equation*}
so we have $\partial_\lambda\psi_m(\lambda)=\delta^{-1}\rho_m(\lambda/\delta)$. Therefore, by (\ref{eq:6.1}),
\begin{equation}\label{eq:6.2}
\left|\partial_\lambda^k\psi_m(\lambda)\right|\le (C/\delta)^{k}k!,\quad\forall\lambda\in \R,
\end{equation}
for all integers $0\le k\le m$.
Clearly, we also have $0\le \psi_m(\lambda)\le 1$, and $\psi_m(\lambda)=0$ for $\lambda\le\delta$, 
$\psi_m(\lambda)=1$ for $\lambda\ge \delta_1:=2\delta+\delta\ln m$.
Define the function $\widetilde\Psi_m(\lambda,\lambda')$, $\lambda,\lambda'\in [0, \infty)$, by
\begin{equation*}
\widetilde\Psi_m(\lambda,\lambda')= 
\begin{cases}
\frac{\psi_m(\lambda)-\psi_m(\lambda')}{\lambda-\lambda'} & \lambda \neq\lambda', \\
\partial_\lambda\psi_m(\lambda) & \lambda = \lambda'.
\end{cases}
\end{equation*}
We note that an equivalent way to define $\widetilde \Psi_m$ is 
\begin{equation*}
\widetilde\Psi_m(\lambda,\lambda') = \delta^{-1}\int_0^1\rho_m(\lambda'(1-\sigma)/\delta+\lambda \sigma/\delta)d\sigma,
\end{equation*}
which follows from 
\begin{equation*}
\psi_m(\lambda)-\psi_m(\lambda')=\int_{\lambda'}^{\lambda}  \partial_\tau (\psi_m(\tau))d\tau = \delta^{-1} \int_{\lambda'}^{\lambda} \rho_m(\tau/\delta) d \tau
\end{equation*}
 followed by the substitution $\tau= \lambda'(1-\sigma)+\lambda \sigma$. Set
\begin{equation*}
\Psi_m(\lambda,\lambda')=(\lambda+\lambda')^{-1}\widetilde\Psi_m(\lambda,\lambda'), \qquad \lambda, \, \lambda' \in [0, \infty).
\end{equation*}
which is well defined since $\widetilde{\Psi}_m(\lambda, \lambda') = 0$ if $\lambda, \, \lambda' \le \delta$. We need the following

\begin{lemma} \label{6.1}
For large $m$, the function $\Psi_m(\cdot,\lambda')\in C^\infty(\R^+)$ satisfies the bounds
\begin{equation}\label{eq:6.3}
\left|\partial_\lambda^k\Psi_m(\lambda,\lambda')\right|\le C^{k+1}k!(\lambda+1)^{-1}\left((\lambda'+1)^{-1}\ln m\right)^j,\quad j=0,1,
\end{equation}
for all $\lambda, \lambda'\in \R^+$ and all integers $0\le k\le m$ with some constant $C>0$ depending on $\delta$.
\end{lemma}

\begin{proof} For $\lambda \pm \lambda'\neq 0$, we have
\begin{equation}\label{eq:6.4}
\left|\partial_\lambda^k(\lambda \pm \lambda')^{-1}\right|\le k!|\lambda \pm \lambda'|^{-k-1}
\end{equation}
for every integer $k\ge 0$. To prove (\ref{eq:6.3}), suppose first that
$0 \le \lambda < \delta/2$. Then
\begin{equation*}
\widetilde \Psi_m (\lambda, \lambda') =
\begin{cases}
 0 &  0 \le \lambda' < \delta, \\
   -\psi_m(\lambda')(\lambda - \lambda')^{-1} & \lambda' > 3\delta/4,
\end{cases}
\end{equation*}
and in the latter case, $|\lambda  -  \lambda'|  \gtrsim \lambda' + 1$, $\lambda + \lambda' \gtrsim \lambda + 1$. Here and for the rest of the proof, the implicit constants may depend on $\delta$ but are independent of $\delta_1$. Thus when $0 \le \lambda < \delta/2$ and $\lambda' > 3\delta/4$ we have, by \eqref{eq:6.4} and the Leibniz rule,
 \begin{equation*}
 \begin{split}
 \left| \partial_\lambda^k \Psi_m(\lambda, \lambda') \right| &\le \left| \partial^k_\lambda \frac{1}{(\lambda + \lambda')(\lambda - \lambda')} \right| \\
 &\le C^{k+1}k!(\lambda +1)^{-1} (\lambda' +1)^{-1}  \\
 &\le C^{k+1}k!(\lambda'+1)^{-1}\left((\lambda'+1)^{-1}\ln m\right)^j.
 \end{split}
 \end{equation*}
 
Next, assume $\lambda\ge 2\delta_1$. Then,
\begin{equation*}
\widetilde \Psi_m (\lambda, \lambda') =
\begin{cases}
 (1-\psi_m(\lambda'))(\lambda-\lambda')^{-1} &  0 \le \lambda'  < 3\delta_1 /2, \\
 0 & \lambda' > \delta_1,
\end{cases}
\end{equation*}
and in the former case $|\lambda - \lambda'| \gtrsim \lambda+1$, $\lambda + \lambda' \gtrsim \lambda' + 1
$. So again by \eqref{eq:6.4} and the Leibniz rule $ \left| \partial_\lambda^k \Psi_m(\lambda, \lambda') \right| \le C^{k+1}k!(\lambda+1)^{-1}\left((\lambda'+1)^{-1}\ln m\right)^j.$

Let now $\delta/3 < \lambda < 3\delta_1$. Then 
\begin{equation*}
\widetilde\Psi_m(\lambda,\lambda')= \begin{cases} 
 \psi_m(\lambda)(\lambda-\lambda')^{-1} & 0 \le \lambda' < \delta/4, \\
\delta^{-1}\int_0^1\rho_m(\lambda'(1-\sigma)/\delta+\lambda\sigma/\delta)d\sigma & \delta/5 < \lambda' < 5\delta_1, \\
(\psi_m(\lambda)- 1)(\lambda-\lambda')^{-1}& \lambda' > 4\delta_1.
\end{cases}
\end{equation*}
In the first case $|\lambda-\lambda'|\gtrsim \lambda+1$, $\lambda+\lambda'\gtrsim \lambda'+1$, and in the third case
 $|\lambda-\lambda'|\gtrsim \lambda'+1$, $\lambda+\lambda'\gtrsim \lambda+1$. Thus  (\ref{eq:6.3}) follows from (\ref{eq:6.2}), (\ref{eq:6.4}) and the Leibniz rule as above. In the second case $\lambda+\lambda'\gtrsim \lambda+1$. Moreover, differentiating under the under the integral and using \eqref{eq:6.1} gives
\begin{equation*}
\left| \partial_\lambda^\ell \widetilde{\Psi}_m(\lambda,\lambda') \right|
\le C^{\ell+1}\ell!,
\qquad 0\le \ell\le k\le m.
\end{equation*}
 In addition, $(\lambda'+1)^{-1}\ln m \ge (5\delta_1 + 1)^{-1} \ln m \gtrsim 1$. Therefore (\ref{eq:6.3}) follows in the second case from (\ref{eq:6.4}), the previous estimate, and the Leibniz rule.
\end{proof}

Next we extend the function $\psi_m(\lambda)$ to a smooth, even function on the whole $\R$ (recall that $\psi_m(\lambda) = 0$ for $\lambda \le \delta$) From now on our use of the notation $\psi_m$ is meant to refer to this extension. We then smoothly extend $\Psi_m(\lambda, \lambda') = (\lambda + \lambda')^{-1} \widetilde{\Psi}_m(\lambda, \lambda')$ to the whole space $\R_\lambda \times \R_{\lambda'}$ by
\begin{equation*}
\Psi_m(\lambda, \lambda') = 
\begin{cases}
(\lambda^2 - (\lambda')^2)^{-1} (\psi_m(\lambda) - \psi_m(\lambda'))& \lambda \neq \lambda', \\
(2 \lambda)^{-1} \partial_\lambda \psi_m(\lambda) & \lambda = \lambda'.
\end{cases}
\end{equation*}
Observe the smoothness of this extension is justified by noticing that $\Psi_m(\lambda, \lambda') = 0$ for $|\lambda|,\, |\lambda'| \le \delta$, while near the set $\{(\lambda, \lambda') : 0 \neq \lambda = \lambda'\}$, 
\begin{equation*}
\Psi_m(\lambda, \lambda') = (\lambda + \lambda')^{-1}\int_0^1 (\partial \psi_m)((1 - \sigma)\lambda' + \sigma \lambda)d\sigma.
\end{equation*}
 We also have $\Psi_m(\lambda, \lambda') = \Psi_m(|\lambda|, |\lambda'|)$.

Rearranging the expression for $\Psi_m(\lambda, \lambda')$ yields
\begin{equation*}
\psi_m(\lambda') - \psi_m(\lambda) = ((\lambda')^2 - \lambda^2) \Psi_m(\lambda, \lambda'), \qquad \lambda, \, \lambda' \in \R,
\end{equation*}
whence for all $\lambda \in \R$,
\begin{equation*}
\psi_m(P^{1/2})-\psi_m(\lambda)=(P-\lambda^2)\Psi_m(\lambda,P^{1/2}), \qquad \text{on } D(P).
\end{equation*}
This identity will be used at a later stage in the analysis.

In view of Lemma \ref{E.2}, given an integer $m\gg 1$
there is a real-valued function $\rho_m^\sharp\in C_0^\infty(\R)$, $\rho_m^\sharp\ge 0$, 
$\int_{-\infty}^\infty\rho_m^\sharp(\sigma)d\sigma=1$, such that $\rho_m^\sharp(\sigma)=0$ for $\sigma\le 1$ and $\sigma\ge 2$, and 
 \begin{equation*}
\left|\partial_\sigma^k\rho_m^\sharp(\sigma)\right|\le (C\ln m)^{k+1}k!,\quad\forall\sigma\in \R,
\end{equation*}
for all integers $0\le k\le m$  with a constant $C>0$ independent of $k$ and $m$. Define the functions $\psi_m^\sharp$ and
$\Psi_m^\sharp$ by replacing the function $\rho_m$ by $\rho_m^\sharp$ in the definition of $\psi_m$ and
$\Psi_m$, respectively. Then $\psi_m^\sharp(\lambda) = 0$ for $\lambda \le \delta$ and $\psi_m^\sharp(\lambda) = 1$ for $\lambda \ge 2 \delta$. Repeating the prior arguments of this Section, with $\delta_1$ replaced by $\delta$, shows that $\Psi_m^\sharp$ satisfies (\ref{eq:6.3}) with an additional factor $(\ln m)^{k+1}$ in the right-hand side.

Using Lemma \ref{6.1} we will prove

\begin{prop} \label{6.2}
For large $m$ and $t\gg 1$ we have the estimates
\begin{equation}\label{eq:6.5}
\begin{split}
&\int_t^\infty \left\|\mu\cos(t'\sqrt{P})\psi_m(P^{1/2})\mu f\right\|_{L^2}^2dt'+
\int_t^\infty \left\|\mu\nabla^{\ell}P^{-1/2}\sin(t'\sqrt{P})\psi_m(P^{1/2})\mu f\right\|_{L^2}^2dt'\\
&\le  C^{2m+2}(m!)^2 (\ln m)^2 t^{-2m}\|f\|_{L^2}^2,\qquad\forall f\in L^2,
\end{split}
\end{equation}
\begin{equation}\label{eq:6.6}
\begin{split}
&\int_t^\infty \left\|\mu P^{1/2}\sin(t'\sqrt{P})\psi_m(P^{1/2})\mu f\right\|_{L^2}^2dt'+
\int_t^\infty \left\|\mu\nabla^{\ell}\cos(t'\sqrt{P})\psi_m(P^{1/2})\mu f\right\|_{L^2}^2dt'\\
&\le  C^{2m+2}(m!)^2(\ln m)^2 t^{-2m}\|f\|_{H^1}^2,\qquad \forall f\in H^1,
\end{split}
\end{equation}
where $\mu(x)=e^{-c\langle x\rangle/2}$, $\ell\in\{0,1\}$, 
 and $C>0$ is a constant independent of $m$, $t$ and $f$. 
 
 Furthermore, the estimates (\ref{eq:6.5}) and (\ref{eq:6.6}) hold with $\psi_m$ replaced by $\psi_m^\sharp$ 
 with an additional factor $(\ln m)^{2m+2}$ in the right-hand sides.
 
 If the dimension $d$ is odd and the condition (\ref{eq:1.6})
 is assumed, then the estimates (\ref{eq:6.5}) and (\ref{eq:6.6}) hold for all integers $m\ge 0$ with $\psi_m$ replaced by $1$. 
\end{prop}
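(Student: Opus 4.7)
The strategy is to express $U(t) := \mu \cos(t\sqrt P) \psi_m(P^{1/2}) \mu f$ via Stone's formula as a vector-valued Fourier integral in the spectral parameter $\lambda$, then convert spectral derivative bounds into temporal decay via Plancherel. Specifically, by the spectral theorem,
\begin{equation*}
U(t) = \int_\R e^{it\lambda} H(\lambda) f\, d\lambda,
\end{equation*}
where $H(\lambda) = \frac{\lambda}{2\pi i}\psi_m(\lambda)\mu[(P-\lambda^2-i0)^{-1}-(P-\lambda^2+i0)^{-1}]\mu$ for $\lambda>0$, extended evenly to $\R$ (valid since the integrand is even in $\lambda$). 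Integrating by parts $k$ times in $\lambda$---the boundary terms vanish by the decay of $\partial_\lambda^k H$ furnished below---yields $(it)^k U(t) = (-1)^k \int_\R e^{it\lambda}\partial_\lambda^k H(\lambda) f\, d\lambda$, and vector-valued Plancherel gives
\begin{equation*}
\int_t^\infty \|U(t')\|^2_{L^2} \, dt' \le t^{-2k}\int_\R (t')^{2k}\|U(t')\|^2_{L^2}\, dt' = 2\pi\, t^{-2k} \int_\R \|\partial_\lambda^k H(\lambda) f\|^2_{L^2}\, d\lambda.
\end{equation*}
The task then reduces to bounding the right-hand side by $C^{2k+2}(k!)^2\|f\|^2$.

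To estimate $\partial_\lambda^k H$, I would apply the Leibniz rule. The inputs are: $|\partial_\lambda^j\psi_m|\le (C/\delta)^j j!$ by (6.2); $\|\partial_\lambda^j\mu(P-\lambda^2\pm i 0)^{-1}\mu\|\le C^{j+1} j!(|\lambda|+1)^{-1}$ from Theorems \ref{3.3}/\ref{4.2}; and $\partial_\lambda\lambda=1$, $\partial_\lambda^{\ge 2}\lambda=0$. For terms where at least one derivative falls on $\psi_m$, the factor $\psi_m^{(j_1)}$ is compactly supported in $\delta\le|\lambda|\le 2\delta$, so the $L^2_\lambda$ bound follows directly from these pointwise estimates; the multinomial coefficients and the factorials combine to produce precisely the $k!$ factor, in a manner analogous to the proof of Lemma \ref{6.1}.

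The principal obstacle is the term in which $\psi_m$ is undifferentiated, namely $\psi_m(\lambda)\partial_\lambda^k[\lambda\mu(R_- - R_+)\mu]f$: since the pointwise bound $C^{k+1} k!$ is not integrable over $\R$, it does not lie in $L^2_\lambda$ on the nose. To circumvent this I would use the operator identity
\begin{equation*}
\psi_m(\lambda)\bigl[(P-\lambda^2-i0)^{-1} - (P-\lambda^2+i0)^{-1}\bigr] = \psi_m(P^{1/2})\bigl[(P-\lambda^2-i0)^{-1} - (P-\lambda^2+i0)^{-1}\bigr],
\end{equation*}
which follows from $\psi_m(P^{1/2})-\psi_m(\lambda)=(P-\lambda^2)\Psi_m(\lambda,P^{1/2})$ together with $(P-\lambda^2)(R_--R_+)=0$. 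This transfers the low-frequency cut-off from a multiplier in $\lambda$ to a functional-calculus projection, so that the spectral density becomes $\mu(R_- - R_+)\tilde f$ with $\tilde f := \psi_m(P^{1/2})\mu f$, $\|\tilde f\|_{L^2}\le\|f\|_{L^2}$. The $k=0$ case then reduces to Kato smoothing for the half-wave propagator $\mu e^{it\sqrt P}\tilde f$, whose smoothness follows from the high-frequency bounds of Theorems \ref{3.1}/\ref{4.1}; and the higher-derivative versions $\int\lambda^2\|\partial_\lambda^k[\mu(R_- - R_+)\mu]\tilde f\|^2 d\lambda\le C^{2k+2}(k!)^2\|f\|^2$ follow by iterating this argument, using Cauchy-type estimates on the analytic continuation constructed in Propositions \ref{3.2}/\ref{4.2} to control $\partial_\lambda^k$ on a strip of constant width below $\gamma$.

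The remaining estimates in (\ref{eq:6.5})--(\ref{eq:6.6}) follow by the same scheme with minor modifications: $\mu P^{-1/2}\nabla^\ell\sin$, $\mu\nabla^\ell\cos$, and $\mu P^{1/2}\sin$ introduce factors of $\lambda^{-1}$, $\nabla$, and $\lambda$ respectively in the spectral integrand, all absorbed by the $\ell=1$ versions of Theorems \ref{3.3}/\ref{4.2} or by the $H^1$-weighting on the data (using $\|\mu f\|_{H^1}\lesssim\|f\|_{H^1}$ after commuting $\mu$ past a derivative). Finally, for odd $d$ under (\ref{eq:1.6}), the resolvent bounds of Theorems \ref{3.3}/\ref{4.2} hold uniformly down to $\lambda=0$, so the cutoff $\psi_m$ is no longer needed and the same argument applies verbatim with $\psi_m\equiv 1$.
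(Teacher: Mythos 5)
Your reduction via Plancherel is not wrong, but it is essentially a restatement of the target: $\int_\R (t')^{2k}\|U(t')\|^2\,dt' = 2\pi\int_\R\|\partial_\lambda^k H(\lambda)f\|^2\,d\lambda$, so everything hinges on proving the $L^2_\lambda$ bound $\int\|\partial_\lambda^k H(\lambda)f\|^2 d\lambda\le C^{2k+2}(k!)^2\|f\|^2$, and this is exactly where the proposal has a genuine gap. The Leibniz terms where $\psi_m$ is hit are fine (compact $\lambda$-support), and your $k=0$ treatment of the remaining term is fine (it is Kato smoothing, available from the uniform bounds of Theorems \ref{3.1}/\ref{4.1}). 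But for $k\ge1$ the claim that $\int\lambda^2\|\partial_\lambda^k[\mu(R_--R_+)\mu]\tilde f\|^2d\lambda\le C^{2k+2}(k!)^2\|f\|^2$ "follows by iterating this argument, using Cauchy-type estimates on the analytic continuation" does not hold up. The continuation in Propositions \ref{3.2}/\ref{4.2} and Theorems \ref{3.3}/\ref{4.2} gives only \emph{operator-norm} bounds $\|\partial_\lambda^k\mu R(\lambda)\mu\|\le C^{k+1}k!(|\lambda|+1)^{-1}$, hence a pointwise bound $C^{k+1}k!\|\tilde f\|$ for your integrand, which is not square-integrable on $\{|\lambda|\ge\delta\}$ — the same obstruction you identified at $k=0$, and Cauchy's formula cannot convert it into an $L^2_\lambda$ bound. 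To run Cauchy together with Minkowski on the vector-valued function $\lambda\mapsto\mu R(\lambda)\mu\tilde f$ you would need uniform $L^2_\lambda$ (smoothing) bounds along horizontal lines \emph{inside the continuation strip above the real axis}; but such a line bound is, via the Fourier transform in $t$, equivalent to $\int_0^\infty e^{2\sigma t}\|\mu(\text{wave evolution})\tilde f\|^2dt<\infty$, i.e.\ to the exponential local energy decay you are trying to prove. Equivalently, the derivatives $\partial_\lambda^k$ produce powers $\mu R^{k+1}\mu$ of the resolvent, and smoothing estimates for these with constants $C^kk!$ do not follow from the quoted theorems (splitting the weight $\mu=\nu^2$ forces a growing weight on one side of some factor). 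So the decisive step is circular/unsubstantiated.

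The paper's proof avoids this by never applying the resolvent to the fixed datum $f$: it cuts off in time ($\phi u$, Duhamel), so the Fourier transform in $t$ represents $\widehat{\mu\nabla^\ell\phi u}$ and $\widehat{\mu\partial_t\phi u}$ as the boundary resolvent and $\Psi_m(\lambda,P^{1/2})$ acting on $\widehat{\mathcal V}(\lambda)$, where $\mathcal V$ is supported in $t\in[\gamma/3,\gamma/2]$; the $\lambda$-integrability then comes for free from Plancherel applied to $\mathcal V$, the weight $\mu^{-1}\widehat{\mathcal V}$ is controlled by the small-time energy estimate with the growing weight (Lemma \ref{6.4}), the $\varepsilon$-regularized error term is killed by Lemma \ref{6.5}, and the $C^{k+1}k!$ resolvent-derivative bounds are used only as uniformly bounded operator-valued multipliers. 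If you want to salvage your scheme, you need an analogous mechanism that supplies square-integrability in $\lambda$ independently of the resolvent bounds; as written, the key estimate for $k\ge1$ is missing.
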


\begin{proof} 
We first prove

\begin{lemma} \label{6.3}
Given any $g\in D(P^{1/2})$,
\begin{equation}\label{eq:6.7}
\|\nabla g\|_{L^2}\lesssim \|g\|_{L^2}+\|P^{1/2}g\|_{L^2},
\end{equation}
\begin{equation}\label{eq:6.8}
\|P^{1/2}g\|_{L^2}\lesssim \|g\|_{L^2}+\|\nabla g\|_{L^2}.
\end{equation}
\end{lemma}

{\it Proof.} If $g \in D(P)$, 
\begin{equation*}
\|P^{1/2}g\|_{L^2}^2=\langle Pg,g\rangle_{L^2}=\|(i\nabla+b)g\|_{L^2}^2+\langle Vg,g\rangle_{L^2}\ge 
\frac{1}{2}\|\nabla g\|_{L^2}^2-O(1)\|g\|_{L^2}^2,
\end{equation*}
which implies (\ref{eq:6.7}) for $g \in D(P)$. Observe that for the case a), we used the estimate (\ref{semiboundedness}) from Appendix \ref{self-adjointness mag Schro appendix} with $\epsilon = 1/2$. For the case b) we used Green's formula. 
Similarly,
\begin{equation*}
\|P^{1/2}g\|_{L^2}^2=\|(i\nabla+b)g\|_{L^2}^2+\langle Vg,g\rangle_{L^2}\le 
\|\nabla g\|_{L^2}^2+O(1)\|g\|_{L^2}^2,
\end{equation*}
which is (\ref{eq:6.8}) for $g \in D(P)$.

Having showed (\ref{eq:6.7}) and (\ref{eq:6.8}) for $g \in D(P)$, they follow for any $g \in D(P^{1/2})$. This is because $D(P)$ is dense in $D(P^{1/2})$ with respect to the norm $g \mapsto ( \|g\|^2_{L^2} + \|P^{1/2} g \|^2_{L^2})^{1/2}$.
\end{proof}

We will also need the following

\begin{lemma} \label{6.4}
There is a constant $0<\gamma<1$ such that for all $0\le t\le\gamma$ we have the estimates
\begin{equation}\label{eq:6.9}
\left\|\mu^{-1}\cos(t\sqrt{P})\mu f\right\|_{L^2}
+\left\|\mu^{-1}P^{-1/2}\sin(t\sqrt{P})\mu f\right\|_{L^2}\lesssim \|f\|_{L^2}, \quad\forall f\in L^2,
\end{equation}
\begin{equation}\label{eq:6.10}
\left\|\mu^{-1}P^{1/2}\sin(t\sqrt{P})\mu f\right\|_{L^2}\lesssim \|f\|_{H^1},\quad\forall f\in H^1.
\end{equation}
\end{lemma}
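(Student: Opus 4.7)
The plan is to combine finite propagation speed (FPS) for the wave equation $(\partial_t^2 + P)u = 0$ with the $L^\infty$ slow variation of $\mu(x) = e^{-c\langle x\rangle/2}$. Because the principal symbol of $P$ is $|\xi|^2$, the propagators $\cos(t\sqrt{P})$, $P^{-1/2}\sin(t\sqrt{P})$, and $P^{1/2}\sin(t\sqrt{P}) = -\partial_t\cos(t\sqrt{P})$ all propagate at unit speed: if $g$ is supported in a set $E$, then the image under any of these operators at time $t$ is supported in the $|t|$-neighborhood of $E$. This applies equally in case b), since the Dirichlet condition contributes a favorable-sign boundary term to the energy identity on a shrinking cone.

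I would then fix a locally finite partition of unity $\sum_j \chi_j \equiv 1$ on $\Omega$ with $\supp \chi_j \subseteq B(x_j, \gamma)$, along with larger cutoffs $\tilde\chi_j \in C_0^\infty(\R^d)$ satisfying $\tilde\chi_j \equiv 1$ on $B(x_j, 2\gamma)$ and $\supp \tilde\chi_j \subseteq B(x_j, 3\gamma)$, with $j$-uniform bounds on all derivatives and with a uniformly bounded overlap number $N$. Splitting $\mu f = \tilde\chi_j \mu f + (1 - \tilde\chi_j)\mu f$ and applying FPS to the second summand (whose support lies outside $B(x_j, 2\gamma)$), one obtains, for $0 \le t \le \gamma$,
\begin{equation*}
\chi_j W(t)\mu f = \chi_j W(t)(\tilde\chi_j \mu f), \quad W(t) \in \{\cos(t\sqrt{P}),\, P^{-1/2}\sin(t\sqrt{P}),\, P^{1/2}\sin(t\sqrt{P})\}.
\end{equation*}
Since $\langle\cdot\rangle$ is $1$-Lipschitz, $\mu(x)/\mu(x_j) \asymp 1$ on $B(x_j, 3\gamma)$, with constants depending only on $\gamma$ and $c$.

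For (\ref{eq:6.9}), I would invoke the spectral bounds $\|\cos(t\sqrt{P})\|_{L^2\to L^2} \le 1$ and $\|P^{-1/2}\sin(t\sqrt{P})\|_{L^2\to L^2} \le t \le 1$ to conclude $\|\chi_j \mu^{-1} W(t)(\tilde\chi_j \mu f)\|_{L^2} \lesssim \|\tilde\chi_j f\|_{L^2}$, and then sum using finite overlap: $\|\mu^{-1}W(t)\mu f\|_{L^2}^2 \le N\sum_j \|\chi_j \mu^{-1} W(t)\mu f\|_{L^2}^2 \lesssim \|f\|_{L^2}^2$. For (\ref{eq:6.10}), I would use that $P^{1/2}$ commutes with $\sin(t\sqrt{P})$ and that $\|\sin(t\sqrt{P})\|_{L^2\to L^2}\le 1$, so $\|P^{1/2}\sin(t\sqrt{P})(\tilde\chi_j\mu f)\|_{L^2} \le \|P^{1/2}(\tilde\chi_j\mu f)\|_{L^2}$; then Lemma \ref{6.3}, combined with $|\nabla\mu| \le c\mu$ and the uniform bounds on $\nabla\tilde\chi_j$, yields $\|P^{1/2}(\tilde\chi_j\mu f)\|_{L^2} \lesssim \|\tilde\chi_j\mu f\|_{H^1} \lesssim \mu(x_j)\|\tilde\chi_j f\|_{H^1}$, and the summation proceeds identically.

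The main obstacle is a rigorous justification of FPS for $P$ with merely $L^\infty$ coefficients. The standard energy identity on the truncated cone $\{(s,x) : 0\le s\le t,\, |x-x_0|\le R-s\}$ yields $E_{x_0,R}(t) \le e^{Ct}E_{x_0,R}(0)$ with $C$ depending only on $\|b\|_{L^\infty}$ and $\|V\|_{L^\infty}$; one regularizes the coefficients to apply this estimate to smooth approximations and then passes to the $L^\infty$ limit by density.
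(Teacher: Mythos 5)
Your proposal is correct in outline, but it takes a genuinely different route from the paper. You localize on the spatial scale $\gamma$: a partition of unity $\{\chi_j\}$ with enlargements $\{\tilde\chi_j\}$, unit-speed finite propagation to replace $W(t)\mu f$ by $W(t)(\tilde\chi_j\mu f)$ on $\supp\chi_j$ for $t\le\gamma$, the observation that $\mu$ varies only by a factor $e^{O(c\gamma)}$ over balls of radius $O(\gamma)$ so the weights $\mu^{-1}$ and $\mu$ cancel locally, and then only the trivial spectral bounds $\|\cos(t\sqrt P)\|\le 1$, $\|P^{-1/2}\sin(t\sqrt P)\|\le t$, $\|\sin(t\sqrt P)\|\le 1$ together with Lemma \ref{6.3} and bounded overlap. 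The paper instead proves a weighted energy inequality directly: with $\eta=\mu_k^{-1}$ (a bounded truncation of $\mu^{-1}$ satisfying $|\nabla\eta|\lesssim\eta$ uniformly in $k$) it differentiates $\mathcal{E}(t)=\|\eta u\|^2_{L^2}+\|\eta\partial_tu\|^2_{L^2}+\|\eta(i\nabla+b)u\|^2_{L^2}$, shows $|\mathcal{E}'|\lesssim\mathcal{E}$ because the commutator terms are again controlled by the same weighted quantities, runs Gronwall on $[0,\gamma]$, and passes to the limit in $k$ by Fatou; applying this to $u=P^{-1/2}\sin(t\sqrt P)\mu f$ and $u=\cos(t\sqrt P)\mu f$ gives \eqref{eq:6.9} and \eqref{eq:6.10}. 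What each buys: your argument is geometrically transparent and reduces the lemma to soft facts, but it outsources all of the rough-coefficient difficulty to the finite-propagation-speed statement for $(i\nabla+b)^2+V$ with $L^\infty$ coefficients (and for the Dirichlet realization in case b)), which you rightly flag as the main obstacle; making that rigorous (cone energy identity at the level of form solutions, or regularization of $b,V$ plus strong resolvent convergence so that $\cos(t\sqrt{P_n})g\to\cos(t\sqrt P)g$ and supports persist in the limit, plus the boundary term for Dirichlet data, plus an approximation step to handle $L^2$ data for $P^{-1/2}\sin(t\sqrt P)$) is essentially an energy argument of the same weight as the paper's, so the total effort is comparable. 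The paper's weighted-Gronwall proof avoids any propagation-of-supports statement and any regularization, at the cost of the slightly less intuitive truncated weight $\mu_k^{-1}$. In both approaches the restriction to $0\le t\le\gamma$ is only there to keep the constants uniform (your weight-comparison constant $e^{O(c\gamma)}$ plays the role of the paper's Gronwall factor), and both share the same domain bookkeeping for \eqref{eq:6.10} (one needs $\tilde\chi_j\mu f$, respectively $\mu f$, in $D(P^{1/2})$, with the general case recovered by density).
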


\begin{proof} 
Let $u(\cdot,t) \in C^2(\R; L^2(\Omega)) \cap C^1(\R; D(P^{1/2}))$, $u(\cdot,t)\in D(P)$ be a solution of the equation $(\partial_t^2+P)u=0$.
Let $\eta\in C^2(\overline\Omega)$ be a bounded real-valued function with bounded derivatives, independent of the variable $t$. Set
\begin{equation*}
\mathcal{E}(t)=\left\|\eta u(t)\right\|_{L^2}^2+\left\|\eta\partial_tu(t)\right\|_{L^2}^2+\left\|\eta(i\nabla+b)u(t)\right\|_{L^2}^2.
\end{equation*}
We have the identity
\begin{equation}\label{eq:6.11}
\frac{d\mathcal{E}(t)}{dt}=\mathcal{E}_1(t)+\mathcal{E}_2(t),
\end{equation}
where 
\begin{equation*}
\mathcal{E}_1(t)=2{\rm Re}\langle\eta\partial_t u(t),\eta u(t)\rangle_{L^2}
\end{equation*}
and
\begin{equation*}
\begin{split}
\mathcal{E}_2(t)&=2{\rm Re}\langle\eta\partial_t^2 u(t),\eta \partial_tu(t)\rangle_{L^2}+2{\rm Re}\langle\eta(i\nabla+b)\partial_t u(t),\eta(i\nabla+b)u(t)\rangle_{L^2}\\
&=-2{\rm Re}\langle\eta^2 Pu(t),\partial_t u(t)\rangle_{L^2}+2{\rm Re}\langle\eta^2(i\nabla+b)u(t),(i\nabla+b)\partial_tu(t)\rangle_{L^2}\\
&=-2{\rm Re}\langle P\eta^2u(t),\partial_tu(t)\rangle_{L^2}+2{\rm Re}\langle[P,\eta^2]u(t),\partial_tu(t)\rangle_{L^2}\\
&+2{\rm Re}\langle\eta^2(i\nabla+b)u(t),(i\nabla+b)\partial_tu(t)\rangle_{L^2}\\
&=-2{\rm Re}\langle (i\nabla+b)\eta^2u(t),(i\nabla+b)\partial_tu(t)\rangle_{L^2}\\
&-2{\rm Re}\langle(V\eta^2-[P,\eta^2])u(t),\partial_tu(t)\rangle_{L^2}\\
&+2{\rm Re}\langle\eta^2(i\nabla+b)u(t),(i\nabla+b)\partial_tu(t)\rangle_{L^2}\\ 
&=-2{\rm Re}\langle [i\nabla,\eta^2]u(t),(i\nabla+b)\partial_tu(t)\rangle_{L^2}\\
&-2{\rm Re}\langle(V\eta^2-[P,\eta^2])u(t),\partial_tu(t)\rangle_{L^2}\\
&=2{\rm Re}\langle \mathcal{M}(\eta)u(t),\partial_tu(t)\rangle_{L^2},
\end{split}
\end{equation*}
where
\begin{equation*}
\begin{split}
\mathcal{M}(\eta) &= [P, \eta^2] - V\eta^2 - (i\nabla + b)\cdot[i\nabla, \eta^2] \\
&= [- \Delta, \eta^2] + 2ib \cdot \nabla \eta^2 - V\eta^2 -(i\nabla+b)\cdot[i\nabla,\eta^2].
\end{split}
\end{equation*}
For the fourth equality in the above calculation, we used Green's formula in the case b).

Let 
$\chi\in C_0^\infty(\R^d;[0,1])$ be such that $\chi(x)=1$ for $|x|\le a$, $\chi(x)=0$ for $|x|\ge 2a$, where $a > 0$ is fixed sufficiently large so that $\overline{\mathcal{O}} \subset \{|x| < a \} $ Given
$k\in\mathbb{N}$, set $\mu_k(x)=e^{-\frac{c}{2}\langle x\rangle\chi(x/k)}$. Clearly, we have $\mu_k(x)^{-1}\le\mu(x)^{-1}$
and $|\partial_x^\alpha(\mu_k(x)^{-1})|\lesssim\mu_k(x)^{-1}$ for $|\alpha|\le 1$ uniformly in $k$. 
We are going to use the above identities with
$\eta=\mu_k^{-1}$. Observe that
\begin{equation*}
\left|\mathcal{M}(\mu_k^{-1})u\right|\lesssim \mu_k^{-2}(|u|+|\nabla u|)
\end{equation*}
uniformly in $k$, which implies
\begin{equation}\label{eq:6.12}
|\mathcal{E}_j(t)|\lesssim \mathcal{E}(t),\quad j=1,2,
\end{equation}
uniformly in $k$. By (\ref{eq:6.11}) and (\ref{eq:6.12}) we obtain
\begin{equation}\label{eq:6.13}
\mathcal{E}(t)\le\mathcal{E}(0)+C\int_0^t\mathcal{E}(t')dt'
\end{equation}
with a constant $C>0$ independent of $k$. Integrating (\ref{eq:6.13}) leads to the inequality
\begin{equation*}
\int_0^\gamma\mathcal{E}(t)dt\le\mathcal{E}(0)+C\gamma\int_0^\gamma\mathcal{E}(t)dt
\end{equation*}
for any $0<\gamma\le 1$. Taking $\gamma\le (2C)^{-1}$, we obtain 
\begin{equation*}
\int_0^\gamma\mathcal{E}(t)dt\le 2\mathcal{E}(0),
\end{equation*}
which combined with (\ref{eq:6.13}) yield
\begin{equation}\label{eq:6.14}
\mathcal{E}(t)\le C\mathcal{E}(0)
\end{equation}
for $0\le t\le\gamma$ with a new constant $C>0$ independent of $k$. Clearly, (\ref{eq:6.14}) implies
\begin{equation}\label{eq:6.15}
\sum_{j=0}^1\left\|\mu_k^{-1}\partial_t^ju(\cdot,t)\right\|^2_{L^2}\le C\mathcal{E}(0).
\end{equation}
We now apply (\ref{eq:6.15}) to the function
\begin{equation*}
u=P^{-1/2}\sin(t\sqrt{P})\mu f,\qquad f\in D(P).
\end{equation*}
Since $u|_{t=0}=0$, we have
\begin{equation}\label{eq:6.16}
\mathcal{E}(0)=\left\|\mu_k^{-1}\mu f\right\|^2_{L^2}\le \left\|f\right\|^2_{L^2}.
\end{equation}
By (\ref{eq:6.15}), (\ref{eq:6.16}) and Fatou's lemma,
\begin{equation*}
\begin{split}
&\left\|\mu^{-1}\cos(t\sqrt{P})\mu f\right\|_{L^2}^2
+\left\|\mu^{-1}P^{-1/2}\sin(t\sqrt{P})\mu f\right\|_{L^2}^2\\
&\le\liminf_{k\to\infty}\left\|\mu_k^{-1}\cos(t\sqrt{P})\mu f\right\|_{L^2}^2
+\liminf_{k\to\infty}\left\|\mu_k^{-1}P^{-1/2}\sin(t\sqrt{P})\mu f\right\|_{L^2}^2\\
&\le C\left\|f\right\|^2_{L^2},
\end{split}
\end{equation*}
which proves (\ref{eq:6.9}) for $f \in D(P)$. But then (\ref{eq:6.9}) holds for any $f \in L^2(\Omega)$ since $D(P)$ is dense in $L^2(\Omega)$. 

 To prove (\ref{eq:6.10}) we apply (\ref{eq:6.15}) to the function
\begin{equation*}
u=\cos(t\sqrt{P})\mu f,\qquad f\in D(P).
\end{equation*}
Since $\partial_tu|_{t=0}=0$, we have
\begin{equation}\label{eq:6.17}
\mathcal{E}(0)=\left\|\mu_k^{-1}\mu f\right\|^2_{L^2}+\left\|\mu_k^{-1}(i\nabla+b)\mu f\right\|^2_{L^2}\lesssim \left\|f\right\|^2_{H^1}
\end{equation}
uniformly in $k$. Now (\ref{eq:6.10}) for $f \in D(P)$ follows from (\ref{eq:6.15}), (\ref{eq:6.17}) and Fatou's lemma.

Having showed (\ref{eq:6.10}) for $f \in D(P)$, it holds for any $f \in H^1(\Omega)$ by (\ref{eq:6.7}) and the fact that $D(P)$ is dense in $D(P^{1/2})$ with respect to the norm $g \mapsto (\|g \|^2_{L^2} + \|P^{1/2}g\|^2_{L^2})^{1/2}$.  \eproof

Let $\phi\in C^\infty(\R)$ be such that $\phi(t)=0$ for $t\le\gamma/3$ and $\phi(t)=1$ for $t\ge\gamma/2$. Let $u(\cdot, t) \in C^2(\R ; L^2(\Omega)) \cap C^1(\R; D(P^{1/2})$, $u(\cdot, t) \in D(P)$ be a solution of the equation $(\partial_t^2+P)u(t)=0$. Then the function 
$\phi u$ satisfies the equation
\begin{equation*}
\left(\partial_t^2+P\right)(\phi u)(t)=v(t),
\end{equation*}
where 
\begin{equation*}
v(t)=\phi''(t)u(t)+2\phi'(t)\partial_tu(t).
\end{equation*}
By Duhamel's formula we get
\begin{equation}\label{eq:6.18}
(\phi u)(t)=\int_0^t\sin\left((t-t')\sqrt{P}\right)P^{-1/2}v(t')dt'.
\end{equation}
On the other hand, we have the formula
\begin{equation}\label{eq:6.19}
(P-(\lambda-i\varepsilon)^2)^{-1}=\int_0^\infty e^{-it(\lambda-i\varepsilon)}\sin\left(t\sqrt{P}\right)P^{-1/2}dt,\quad\lambda\in\R,\, 0<\varepsilon<1.
\end{equation}
It follows from (\ref{eq:6.18}) and (\ref{eq:6.19}) that the Fourier transform of the function 
$e^{-\varepsilon t}\partial_t^j(\phi u)$, $\varepsilon > 0$, $j=0,1$, satisfies
\begin{equation}\label{eq:6.20}
\widehat{e^{-\varepsilon t}\partial_t^j(\phi u)} =i^j(\lambda-i\varepsilon)^j
(P-(\lambda-i\varepsilon)^2)^{-1}\widehat{v}(\lambda-i\varepsilon), \qquad \lambda \in \R,\, \varepsilon > 0.
\end{equation}
Note that since $v(t)$ is compactly supported in $t$, it's Fourier transform $\widehat{v}$ is an entire function.

We apply (\ref{eq:6.20}) to the function
\begin{equation*}
u(t)=\sin(t\sqrt{P})P^{-1/2}\psi_m(P^{1/2})\mu f,\qquad f\in D(P),
\end{equation*}
In this situation,
\begin{equation*}
v(t)=\psi_m(P^{1/2})\mathcal{V}(t),
\end{equation*}
\begin{equation*}
\mathcal{V}(t) \defeq \phi''(t)P^{-1/2}\sin(t\sqrt{P})\mu f+2\phi'(t)\cos(t\sqrt{P})\mu f.
\end{equation*}  
By (\ref{eq:6.20}) and the identity
\begin{equation*}
\psi_m(P^{1/2})-\psi_m(\lambda)=(P-\lambda^2)\Psi_m(\lambda,P^{1/2})
\end{equation*}
 we get, with $j=0,1$, 
\begin{equation}\label{eq:6.21}
\begin{split}
&\widehat{e^{-\varepsilon t}\partial_t^j(\phi u)}(\lambda)\\
&=i^j(\lambda-i\varepsilon)^j(P-(\lambda-i\varepsilon)^2)^{-1}
\psi_m(P^{1/2})\widehat{\mathcal{V}}(\lambda-i\varepsilon)\\
&=i^j(\lambda-i\varepsilon)^j(P-(\lambda-i\varepsilon)^2)^{-1}\psi_m(\lambda)\widehat{\mathcal{V}}(\lambda-i\varepsilon)\\
&+i^j(\lambda-i\varepsilon)^j(P-(\lambda-i\varepsilon)^2)^{-1}(P-\lambda^2)\Psi_m(\lambda,P^{1/2})
\widehat{\mathcal{V}}(\lambda-i\varepsilon)\\
&=i^j(\lambda-i\varepsilon)^j(P-(\lambda-i\varepsilon)^2)^{-1}\psi_m(\lambda)\widehat{\mathcal{V}}(\lambda-i\varepsilon)\\
&+i^j(\lambda-i\varepsilon)^j\Psi_m(\lambda,P^{1/2})\widehat{\mathcal{V}}(\lambda-i\varepsilon)\\
&-(2i\varepsilon\lambda+\varepsilon^2)i^j(\lambda-i\varepsilon)^j(P-(\lambda-i\varepsilon)^2)^{-1}\Psi_m(\lambda,P^{1/2})
\widehat{\mathcal{V}}(\lambda-i\varepsilon).
\end{split}
\end{equation}
If $j = 1$, we multiply the left-hand side of (\ref{eq:6.21}) by $\mu$, while if $j = 0$, we let the operator 
$\mu\nabla^{\ell}$, $\ell=0,1$, act on 
the left-hand side of (\ref{eq:6.21}). We would like to make disappear the last term in the right-hand side of (\ref{eq:6.21})
by taking the limit $\varepsilon\to 0$. To this end we need the following lemma, the proof of which is given in the next section.

\begin{lemma} \label{6.5} 
For $m$ large and $\ell\in\{0,1\}$, there exists $C > 0$ so that
\begin{equation}\label{eq:6.22}
\left\| \mu\nabla^{\ell}(P-(\lambda-i\varepsilon)^2)^{-1}\Psi_m(\lambda,P^{1/2})\mu \right\| \le C. 
\end{equation}
for all $\lambda \in \R$ and $0 < \varepsilon < 1$. The constant $C>0$ may depend on $m$ and $\ell$ but is independent of  $\lambda$ and $\varepsilon$. The estimate \eqref{eq:6.22} also holds with $\Psi_m$ replaced by $\Psi^{\sharp}_m$.
\end{lemma}

It follows from Lemma \ref{6.4} that the $L^2$ norm of the function $\mu^{-1}\widehat{\mathcal{V}}(\lambda-i\varepsilon)$ 
is bounded uniformly in $\varepsilon$. Therefore, by Lemma \ref{6.5} we conclude that the $L^2$ norm
of the last term in the right-hand side of (\ref{eq:6.21}) is $O(\varepsilon)$ and hence 
tends to zero as $\varepsilon\to 0$. Thus, from (\ref{eq:6.21}) we get the identities
\begin{equation}\label{eq:6.23}
\widehat{\mu\partial_t(\phi u)}(\lambda)=i\lambda\mu(P-(\lambda-i0)^2)^{-1}\mu
\psi_m(\lambda)\mu^{-1}\widehat{\mathcal{V}}(\lambda)+i\lambda\mu\Psi_m(\lambda,P^{1/2})\widehat{\mathcal{V}}(\lambda),
\end{equation}
\begin{equation}\label{eq:6.24}
\widehat{\mu\nabla^{\ell}\phi u}(\lambda)=\mu\nabla^{\ell}(P-(\lambda-i0)^2)^{-1}\mu
\psi_m(\lambda)\mu^{-1}\widehat{\mathcal{V}}(\lambda)
+\mu\nabla^{\ell}\Psi_m(\lambda,P^{1/2})\widehat{\mathcal{V}}(\lambda),
\end{equation}
for $\lambda\in\R$.
Hence, if $\ell+j\le 1$, given any integer $0\le k\le m$, using the Leibniz formula, we obtain 
\begin{equation}\label{eq:6.25}
\begin{split}
&\widehat{t^k\mu\partial_t^j\nabla^{\ell}\phi u}(\lambda)=(-i\partial_\lambda)^k
\left(\mu(i\lambda)^j\nabla^{\ell}(P-(\lambda-i0)^2)^{-1}\mu
\psi_m(\lambda)\mu^{-1}\widehat{\mathcal{V}}(\lambda)\right)\\
&+\mu(-i\partial_\lambda)^k\left((i\lambda)^j\nabla^{\ell}\Psi_m(\lambda,P^{1/2})
\widehat{\mathcal{V}}(\lambda)\right)\\
&=\sum_{\nu=0}^k\frac{k!}{\nu!(k-\nu)!}(-i\partial_\lambda)^\nu
\left(\mu(i\lambda)^j\nabla^{\ell}(P-(\lambda-i0)^2)^{-1}\mu
\psi_m(\lambda)\right)\mu^{-1}\widehat{t^{k-\nu}\mathcal{V}}(\lambda)\\
&+\mu\sum_{\nu=0}^k\frac{k!}{\nu!(k-\nu)!}(-i\partial_\lambda)^\nu
\left((i\lambda)^j\nabla^{\ell}\Psi_m(\lambda,P^{1/2})\right)\widehat{t^{k-\nu}\mathcal{V}}(\lambda).
\end{split}
\end{equation}
It follows from the estimate (\ref{eq:3.8}) in the case a) and (\ref{eq:4.18}) in the case b), together with (\ref{eq:6.2}),
\begin{equation}\label{eq:6.26}
\left\|\partial_\lambda^\nu\left(\mu\nabla^{\ell}(P-(\lambda-i0)^2)^{-1}\mu
\psi_m(\lambda)\right)\right\|+
\left\|\partial_\lambda^\nu\left(\mu\lambda(P-(\lambda-i0)^2)^{-1}\mu
\psi_m(\lambda)\right)\right\|\le C^{\nu+1}\nu!.
\end{equation}
By (\ref{eq:6.3}) and (\ref{eq:6.7}) we also have 
\begin{equation}\label{eq:6.27}
\left\|\nabla^{\ell}\partial_\lambda^\nu\left(\Psi_m(\lambda,P^{1/2})\right)\right\|\le 
\sum_{j=0}^1\left\|P^{j/2}\partial_\lambda^\nu\left(\Psi_m(\lambda,P^{1/2})\right)\right\|
\le C^{\nu+1}\nu!\ln m,
\end{equation}
\begin{equation}\label{eq:6.28}
\left\|\partial_\lambda^\nu\left(\lambda\Psi_m(\lambda,P^{1/2})\right)\right\|
\le C^{\nu+1}\nu!.
\end{equation}
By (\ref{eq:6.25}) through (\ref{eq:6.28}),
\begin{equation}\label{eq:6.29}
\left\|\widehat{t^k\mu\partial_t\phi u}(\lambda)\right\|_{L^2}+\left\|\widehat{t^k\mu\nabla^{\ell}\phi u}(\lambda)\right\|_{L^2}
\le C^{k+1}k!\ln m\sum_{\nu=0}^k\left\|\mu^{-1}\widehat{t^{k-\nu}\mathcal{V}}(\lambda)\right\|_{L^2}.
\end{equation}
Now let $\mathcal{J}$ denote either $\nabla^{\ell}$ or $\partial_t$. 
By Plancherel's identity and (\ref{eq:6.29}) together with (\ref{eq:6.9}), we obtain
\begin{equation}\label{eq:6.30}
\begin{split}
&\int_{-\infty}^\infty t^{2k}\left\|\mu(\phi\mathcal{J}u)(t)\right\|_{L^2}^2dt
=C\int_{-\infty}^\infty\left\|\widehat{t^k\mu\phi\mathcal{J}u}(\lambda)\right\|_{L^2}^2d\lambda\\
&\le C^{2k+2}(k!)^2(\ln m)^2\sum_{\nu=0}^k\int_{-\infty}^\infty\left\|\mu^{-1}
\widehat{t^{k-\nu}\mathcal{V}}(\lambda)\right\|_{L^2}^2d\lambda\\
&\le C^{2k+2}(k!)^2(\ln m)^2\sum_{\nu=0}^k\int_0^\gamma t^{2k-2\nu}
\left\|\mu^{-1}\mathcal{V}(t)\right\|_{L^2}^2dt\\
&\le C^{2k+2}(k!)^2(\ln m)^2\int_0^\gamma
\left\|\mu^{-1}\mathcal{V}(t)\right\|_{L^2}^2dt\\
&\le C^{2k+2}(k!)^2(\ln m)^2\|f\|_{L^2}^2,
\end{split}
\end{equation}
where $C>0$ denotes a constant that changes from line to line. 
Since
\begin{equation*}
t^{2k}\int_t^\infty \left\|\mu\mathcal{J}u(t')\right\|_{L^2}^2dt'\le \int_{-\infty}^\infty (t')^{2k}\left\|\mu(\phi 
\mathcal{J}u)(t')\right\|_{L^2}^2dt',\quad t>1,
\end{equation*}
the estimate (\ref{eq:6.5}) for $f \in D(P)$ follows from (\ref{eq:6.30}) with $k=m$. But in turn we get (\ref{eq:6.5}) for any $f \in L^2(\Omega)$ by Fatou's lemma and the fact that $D(P)$ is dense in $L^2(\Omega)$.  

To get (\ref{eq:6.6}), we apply the same strategy to the function
\begin{equation*}
u(t)=\cos(t\sqrt{P})\psi_m(P^{1/2})\mu f,\qquad f\in D(P).
\end{equation*}
In this case we have 
\begin{equation*}
\mathcal{V}(t)=\phi''(t)\cos(t\sqrt{P})\mu f+2\phi'(t)P^{1/2}\sin(t\sqrt{P})\mu f.
\end{equation*}
We use (\ref{eq:6.10}) to conclude that (\ref{eq:6.30}) holds for $f \in D(P)$, with $\|f\|_{L^2}$ in the right-hand side
replaced by $\|f\|_{H^1}$. In the same way as above we arrive at (\ref{eq:6.6}) for $f \in D(P)$. But then we conclude (\ref{eq:6.6}) for any $f \in H^1(\Omega)$ using Fatou's Lemma, (\ref{eq:6.7}) and the fact that $D(P)$ is dense in $D(P^{1/2})$ with respect to the norm $f \mapsto (\|f \|^2_{L^2} + \|P^{1/2} f\|^2_{L^2})^{1/2}$.

Clearly, the estimates (\ref{eq:6.26}), (\ref{eq:6.27}) and (\ref{eq:6.28}) hold with $\psi_m$ and $\Psi_m$ replaced by $\psi_m^\sharp$ 
and $\Psi_m^\sharp$ 
 with an additional factor $(\ln m)^{\nu+1}$ in the right-hand sides, which implies the estimates (\ref{eq:6.5}) and (\ref{eq:6.6})
 in this case.

In odd dimensions, under the condition (\ref{eq:1.6}),
the above analysis works with $\psi_m\equiv 1$ because so does the resolvent estimate (\ref{eq:6.26}).
\end{proof}

\begin{proof}[Proof of Theorem \ref{1.1}] 
We will derive the estimate (\ref{eq:1.7}) from (\ref{eq:6.5}). We apply the identity (\ref{eq:6.11}) with $\eta=\mu$ to the function 
\begin{equation*}
u(t) = \sin(t \sqrt{P}) P^{-1/2} \psi_m(P^{1/2}) \mu f,\quad f \in D(P).
\end{equation*}
 We get
\begin{equation}\label{eq:6.31}
\begin{split}
&\frac{d}{dt}\left(\left\|\mu \partial_tu(t)\right\|^2_{L^2}+\left\|\mu(i\nabla+b)u(t)\right\|^2_{L^2}
+\left\|\mu u(t)\right\|^2_{L^2}\right)\\
&=2{\rm \Re}\langle \mathcal{N}(\mu)u(t),\mu \partial_tu(t)\rangle_{L^2} + 2{\rm Re}\langle\mu\partial_t u(t),\mu u(t)\rangle_{L^2}\\
&\le 2\left\|\mu \partial_tu(t)\right\|^2_{L^2}+\left\|\mu u(t)\right\|^2_{L^2}+\left\|\mathcal{N}(\mu)u(t)\right\|^2_{L^2},
\end{split}
\end{equation}
where
\begin{equation*}
\mathcal{N}(\mu)=\mu^{-1}\left([-\Delta, \mu^2] + 2ib \cdot \nabla \mu^2 - V \mu^2 - (i\nabla+b)\cdot[i\nabla,\mu^2]\right) =
\sum_{\ell=0}^1O_{\ell}(\mu)\nabla^{\ell}.
\end{equation*}
By (\ref{eq:6.31}), for all $T>t>1$, 
\begin{equation}\label{eq:6.32}
\begin{split}
&\left\|\mu \partial_tu(t)\right\|_{L^2}^2+\left\|\mu(i\nabla+b)u(t)\right\|_{L^2}^2+\left\|\mu u(t)\right\|_{L^2}^2\\
&\lesssim \left\|\mu \partial_tu(T)\right\|_{L^2}^2+\left\|\mu(i\nabla+b)u(T)\right\|_{L^2}^2+\left\|\mu u(T)\right\|_{L^2}^2\\
&+\int_t^T\left\|\mu\partial_t u(t')\right\|_{L^2}^2dt'+\sum_{\ell=0}^1\int_t^T\left\|\mu\nabla^{\ell}u(t')\right\|_{L^2}^2dt'.
\end{split}
\end{equation}
On the other hand, it follows from (\ref{eq:6.30}) with $k=0$ that there exists a sequence $T_j\to\infty$ such that
\begin{equation}\label{eq:6.33}
\lim_{T_j\to\infty} \left(\left\|\mu \partial_tu(T_j)\right\|_{L^2}^2+\left\|\mu(i\nabla+b)u(T_j)\right\|_{L^2}^2
+\left\|\mu u(T_j)\right\|_{L^2}^2\right)=0.
\end{equation}
Therefore, using (\ref{eq:6.32}) with $T=T_j$ and taking the limit as $T_j\to\infty$, in view of (\ref{eq:6.33}), we obtain
\begin{equation}\label{eq:6.34}
\begin{split}
&\left\|\mu \partial_tu(t)\right\|_{L^2}^2+\left\|\mu(i\nabla+b)u(t)\right\|_{L^2}^2+\left\|\mu u(t)\right\|_{L^2}^2\\
&\lesssim\int_t^\infty\left\|\mu\partial_t u(t')\right\|_{L^2}^2dt'+\sum_{\ell=0}^1
\int_t^\infty\left\|\mu\nabla^{\ell}u(t')\right\|_{L^2}^2dt'.
\end{split}
\end{equation}
By (\ref{eq:6.5}) and (\ref{eq:6.34}),
\begin{equation}\label{eq:6.35}
\begin{split}
&\left\|\mu\partial_t u(t)\right\|_{L^2}+\left\|\mu(i\nabla+b)u(t)\right\|_{L^2}+\left\|\mu u(t)\right\|_{L^2}\\
&\le C^{m+1}m!(\ln m)t^{-m}\|f\|_{L^2}\le C\ln m(Cemt^{-1})^me^{-m}\|f\|_{L^2}\le C (\ln m)e^{-m}\|f\|_{L^2}
\end{split}
\end{equation}
if $Cemt^{-1}\le 1$. We now let $m$ be the biggest integer $\le t(Ce)^{-1}$. 
Then 
\begin{equation*}
(\ln m) e^{-m}\le e \ln (t(Ce)^{-1}) e^{-t(Ce)^{-1}} \le C_1 e^{-c_1t}.
\end{equation*}
Taking $\psi_{\delta,t}(\sigma)=\psi_m(\sigma)$, (\ref{eq:6.35}) shows the desired bound holds for elements $f \in D(P)$. But then (\ref{eq:1.7}) immediately follows because $D(P)$ is dense in $L^2(\Omega)$.

To get (\ref{eq:1.8}), we apply the above analysis
to the function 
\begin{equation*}
u(t) = \cos(t \sqrt{P}) \psi_m(P^{1/2})\mu f,\quad f \in D(P),
\end{equation*}
 and use the estimate (\ref{eq:6.6}) instead of (\ref{eq:6.5}) to conclude that the estimate (\ref{eq:6.35}) holds with $\|f\|_{L^2}$ in the right-hand side replaced by $\|f\|_{H^1}$. Then (\ref{eq:1.8}) follows from (\ref{eq:6.35}), (\ref{eq:6.7}) and the fact that $D(P)$ is dense in $D(P^{1/2})$ with respect to the norm $f \mapsto (\|f\|^2_{L^2} + \|P^{1/2} f\|^2_{L^2})^{1/2}$. 
 
 If we replace $\psi_m$ by $\psi_m^\sharp$, the estimate (\ref{eq:6.35}) takes the form
 \begin{equation}\label{eq:6.36}
\begin{split}
&\left\|\mu\partial_t u(t)\right\|_{L^2}+\left\|\mu(i\nabla+b)u(t)\right\|_{L^2}+\left\|\mu u(t)\right\|_{L^2}\\
&\le (C\ln m)^{m+1}m!t^{-m}\|f\|_{L^2}\le C(Cem(\ln m)t^{-1})^me^{-m}\|f\|_{L^2}\le Ce^{-m}\|f\|_{L^2}
\end{split}
\end{equation}
if $Cem\ln mt^{-1}\le 1$. We now let $m$ be the biggest integer such that $m\ln m\le t(Ce)^{-1}$. Clearly, for $t\gg 1$
we have $m\sim ct/\ln t$ with some constant $c>0$. Therefore, in this case (\ref{eq:1.7}) with
$\psi_{\delta,t}^\sharp(\sigma)=\psi_m^\sharp(\sigma)$ follows from (\ref{eq:6.36}).
The proof of (\ref{eq:1.8}) is similar.

In odd dimensions, under the condition (\ref{eq:1.6}),
the above analysis works with $\psi_m\equiv 1$ because so do the estimates (\ref{eq:6.5}) and (\ref{eq:6.6}).
\end{proof}

\section{Proof of Lemma \ref{6.5}}

We prove the result for $\Psi_m$. The proof of $\Psi^\sharp_m$ is identical, with $\delta_1$ below replaced by $ \delta$. 

For $0\le |\lambda| \le\delta/2$,
\begin{equation*}
\Psi_m(\lambda,P^{1/2})=\psi_m(P^{1/2})(P-\lambda^2)^{-1}
\end{equation*}
and $|x-\lambda|\ge\delta/2$ if $x\in{\rm supp}\,\psi_m$. 
Hence in this case we have
\begin{equation*}
\begin{split}
&\left\|\mu\nabla^{\ell}(P-(\lambda-i\varepsilon)^2)^{-1}\Psi_m(\lambda,P^{1/2})\mu\right\|\\
&\lesssim\sum_{j=0}^1\left\|P^{j/2}(P-(\lambda-i\varepsilon)^2)^{-1}\psi_m(P^{1/2})(P-\lambda^2)^{-1}\right\|\\
&\lesssim \sup_{x\in{\rm supp}\,\psi_m}(|x|+1)|x^2-(\lambda-i\varepsilon)^2|^{-1}|x^2-\lambda^2|^{-1}\lesssim 1
\end{split}
\end{equation*}
uniformly in $\varepsilon$ and $\lambda$. Note that to get the second inequality we used \eqref{eq:6.7}.
 Let now $|\lambda| \ge\delta/2$. Let $\chi_1,\, \chi_2,\, \chi_3 \in C^\infty(\R^+)$ be such that 
$\chi_1+\chi_2+\chi_3\equiv 1$ on $\R^+$, 
$\chi_1(\lambda')=1$ for 
$\lambda'\le \delta/3$, $\chi_1(\lambda')=0$ for $\lambda'\ge \delta/2$,
$\chi_3(\lambda')=0$ for 
$\lambda'\le 3\delta_1$, $\chi_3(\lambda')=1$ for $\lambda'\ge 4\delta_1$. Then
\begin{equation*}
\chi_1(P^{1/2})\Psi_m(\lambda,P^{1/2})=-\psi_m(\lambda)\chi_1(P^{1/2})(P-\lambda^2)^{-1}
\end{equation*}
and $|x-\lambda|\ge\delta/2$ if $x\in{\rm supp}\,\chi_1$ and $\lambda\in{\rm supp}\,\psi_m$. Hence 
\begin{equation*}
\begin{split}
&\left\|\mu\nabla^{\ell}(P-(\lambda-i\varepsilon)^2)^{-1}\chi_1(P^{1/2})\Psi_m(\lambda,P^{1/2})\mu\right\|\\
&\lesssim\sum_{j=0}^1\left\|P^{j/2}(P-(\lambda-i\varepsilon)^2)^{-1} \psi_m(\lambda) \chi_1(P^{1/2})(P-\lambda^2)^{-1}\right\|\\
&\lesssim \sup_{x\in{\rm supp}\,\chi_1,\,\lambda\in{\rm supp}\,\psi_m}(|x|+1)|x^2-(\lambda-i\varepsilon)^2|^{-1}|x^2-\lambda^2|^{-1}\lesssim 1
\end{split}
\end{equation*}
uniformly in $\varepsilon$ and $\lambda$. Furthermore, we have \begin{equation*}
\chi_3(P^{1/2})\Psi_m(\lambda,P^{1/2})=(1-\psi_m)(\lambda)\chi_3(P^{1/2})(P-\lambda^2)^{-1}
\end{equation*}
and $|x-\lambda|\ge\delta_1$ if $x\in{\rm supp}\,\chi_3$ and $\lambda\in{\rm supp}\,(1-\psi_m)$. In the same way as above, we get
\begin{equation*}
\left\|\mu\nabla^{\ell}(P-(\lambda-i\varepsilon)^2)^{-1}\chi_3(P^{1/2})\Psi_m(\lambda,P^{1/2})\mu\right\|\lesssim 1
\end{equation*}
uniformly in $\varepsilon$ and $\lambda$. It remains to show that 
\begin{equation}\label{eq:7.1}
\left\|\mu\nabla^{\ell}(P-(\lambda-i\varepsilon)^2)^{-1}\chi_2(P^{1/2})\Psi_m(\lambda,P^{1/2})\mu\right\|\lesssim 1
\end{equation}
uniformly in $\varepsilon$ and $\lambda$. Clearly, the function 
\begin{equation*}
\varphi(x)=\chi_2(x^{1/2})\Psi_m(\lambda,x^{1/2})
\end{equation*}
belongs to $C_0^\infty(\R)$ and $|\partial_x^n\varphi(x)|\lesssim 1$ for $n\le 2$. 
Then there is an almost analytic extension, $\widetilde\varphi$, of $\varphi$ on $\C$ such that
$\widetilde\varphi|_{\R}=\varphi$ and 
\begin{equation}\label{eq:7.2}
\left|\overline{\partial}\widetilde\varphi(z)\right|\le C_N|{\rm Im}\,z|^N\sum_{n=0}^N\sup_x|\partial_x^n\varphi(x)|,\quad\forall N\ge 0,
\end{equation}
where the constant $C_N$ does not depend on the function $\varphi$; $\widetilde\varphi$ is supported in 
a complex neighbourhood of supp$\,\varphi$. In our case 
$\widetilde\varphi$ is supported in a complex neighbourhood of supp$\,\chi_2(x^{1/2})$.
We are going to use the Helffer-Sj\"ostrand
formula 
\begin{equation}\label{eq:7.3}
\varphi(P)=\frac{1}{\pi}\int\overline{\partial}\widetilde\varphi(z)(P-z)^{-1}dx dy,\quad z=x+iy.
\end{equation}
Thus the operator in (\ref{eq:7.1}) can be written in the form
\begin{equation}\label{eq:7.4}
\frac{1}{\pi}\int\overline{\partial}\widetilde\varphi(z)
\mu\nabla^{\ell}(P-(\lambda-i\varepsilon)^2)^{-1}(P-z)^{-1}\mu dx dy.
\end{equation}
From the resolvent identity
\begin{equation*}
\begin{split}
&(\lambda^2 -\varepsilon^2-z)(P-(\lambda-i\varepsilon)^2)^{-1}(P-z)^{-1}\\
&=2i\varepsilon\lambda(P-(\lambda-i\varepsilon)^2)^{-1}(P-z)^{-1}\\
&+(P-(\lambda-i\varepsilon)^2)^{-1}-(P-z)^{-1},
\end{split}
\end{equation*}
we get
\begin{equation*}
\begin{split}
&|\lambda^2-\varepsilon^2-z|\left\|\mu\nabla^{\ell}(P-(\lambda-i\varepsilon)^2)^{-1}(P-z)^{-1}\mu\right\|\\
&\le 2\varepsilon|\lambda|\left\|\nabla^{\ell}(P-(\lambda-i\varepsilon)^2)^{-1}\right\|\left\|(P-z)^{-1}\right\|\\
&+\left\|\mu\nabla^{\ell}(P-(\lambda-i\varepsilon)^2)^{-1}\mu\right\|+
\left\|(P-z)^{-1}\right\|\\
&\lesssim|{\rm Im}\,z|^{-1}\varepsilon|\lambda|\sum_{j=0}^1\left\|P^{j/2}(P-(\lambda-i\varepsilon)^2)^{-1}\right\|\\
&+ 1+|{\rm Im}\,z|^{-1},
\end{split}
\end{equation*}
where we have used the resolvent estimate (\ref{eq:3.2}) in the case a) and (\ref{eq:4.2}) in the case b). Moreover,
\begin{equation} \label{eq:7.5}
\varepsilon |\lambda| \|(P-(\lambda-i\varepsilon)^2)^{-1}\| \lesssim 1, 
\end{equation}
and
\begin{equation} \label{eq:7.6}
\varepsilon |\lambda| \|P^{1/2}(P-(\lambda-i\varepsilon)^2)^{-1}\| \lesssim |\lambda| + 1.
\end{equation}
Indeed, \eqref{eq:7.6} is a consequence of \eqref{eq:7.5}, the identity
\begin{equation*}
P(P - (\lambda - i\varepsilon)^2)^{-1} = I + (\lambda - i \varepsilon)^2 (P - (\lambda - i\varepsilon)^2)^{-1}
\end{equation*}
and the estimate 
\begin{equation*}
\|P^{1/2}u\|^2_{L^2} =  \langle Pu, u \rangle_{L^2} \le \|Pu\|_{L^2} \|u\|_{L^2}, \qquad u \in D(P). 
\end{equation*}
Combining the previous inequalities implies
\begin{equation*}
|\lambda^2-\varepsilon^2-z|\left\|\mu\nabla^{\ell}(P-(\lambda-i\varepsilon)^2)^{-1}(P-z)^{-1}\mu\right\| \lesssim |\lambda| + 1 + |{\rm Im}\,z|^{-1}.
\end{equation*}
On the other hand, 
\begin{equation*}
|\lambda^2-\varepsilon^2-z|=((\lambda^2 - \varepsilon^2 - {\rm Re}\,z)^2 +  |{\rm Im}\,z|^2)^{1/2}\ge |{\rm Im}\,z|,
\end{equation*}
while for large $|\lambda|$ and $z\in{\rm supp}\,\widetilde{\varphi}$ we have
\begin{equation*}
|\lambda^2-\varepsilon^2-z|\ge |\lambda|^2/2.
\end{equation*}
 Therefore, we have on the support of $\widetilde{\varphi}$,
\begin{equation}\label{eq:7.7}
\left\|\mu\nabla^{\ell}(P-(\lambda-i\varepsilon)^2)^{-1}(P-z)^{-1}\mu\right\|\lesssim  |{\rm Im}\,z|^{-2}
\end{equation}
uniformly in $\varepsilon$ and $\lambda$. It follows from (\ref{eq:7.7}) together with the formula (\ref{eq:7.3}) and (\ref{eq:7.2})
with $N=2$ that the operator (\ref{eq:7.4}) is bounded uniformly in $\varepsilon$ and $\lambda$, which in turn implies (\ref{eq:7.1}).

\appendix

\section{Self-adjointness of the magnetic Schr\"odinger operator on $L^2(\R^d)$} \label{self-adjointness mag Schro appendix}

In this appendix we discuss self-adjointness of \eqref{eq:1.1} when $\Omega = \R^d$, $b \in L^\infty(\R^d ; \R^d)$ is not identically zero, 
and $V \in L^\infty(\R^d ; \R)$. In this case the self-adjoint realization of \eqref{eq:1.1} we use throughout the paper is constructed via a sesquilinear form as follows. On $H^1(\R^d) \times H^1(\R^d)$ put
\begin{equation} \label{form for mag Schro}
q (u, v) \defeq  \int_{\R^d} \nabla \overline{u} \cdot \nabla v  - i \overline{u}b \cdot \nabla v + i vb \cdot \nabla \overline{u} + (V + |b|^2)\overline{u} v dx.
\end{equation}
For any $\epsilon \ge 0$, by Cauchy-Schwarz and Young's inequality,
\begin{equation*}
\begin{split}
\big|\int_{\R^d}  i ub \cdot \nabla \overline{u} - i\overline{u}b \cdot \nabla u dx\big| & = 2 \big| \Im \int_{\R^d}  \overline{u}b \cdot \nabla u dx\big|  \\
& \le (1 - \epsilon)  \| \nabla u \|^2_{L^2} +  \frac{1}{1 - \epsilon}  \| bu  \|^2_{L^2},  
\end{split}
\end{equation*} 
whence,
\begin{equation} \label{semiboundedness}
q (u, u) + \| u\|_{L^2}^2 \ge \epsilon  \| \nabla u \|^2_{L^2} + \int_{\R^d} \big(V + 1 - \frac{\epsilon}{1 - \epsilon} \| b \|^2_{L^\infty(\R^d; \R^d)} \big) |u|^2 dx, \qquad \epsilon \ge 0.
\end{equation}
The last estimate shows that \eqref{form for mag Schro} is semibounded and closed in the sense of \cite[Section 2.3]{kn:Te}. Moreover, if $V \ge 0$, setting $\epsilon = 0$ yields $q(u,u) \ge 0$.

By \cite[Theorem 2.14]{kn:Te}, there exists a unique, densely defined self-adjoint operator $P$ whose quadratic form domain is $H^1(\R^n)$, and whose associated sesquilinear form coincides with $q$ on $H^1(\R^n)$. The domain of $P$ is 
\begin{equation*}
\begin{gathered}
D(P) = \{ u \in H^1(\R^d) : \text{there is } \tilde{u} \in L^2(\R^d) \text{ such that }  q(u, v) = \langle \tilde{u}, v\rangle_{L^2} \text{ for all } v \in H^1(\R^d)\}, \\
 Pu = \tilde{u}.
\end{gathered}
\end{equation*}
The equality $q(u, v) = \langle \tilde{u}, v\rangle_{L^2}$ for $u \in D(P)$ and $v \in H^1(\R^d)$ shows that, in the sense of distributions on $\R^d$, 
\begin{equation*}
Pu = -\Delta u + i \nabla \cdot (u b) + ib \cdot \nabla u + (V + |b|^2)u.
\end{equation*}
 Here, if $(\cdot, \cdot)$ denotes distributional pairing, we define the divergence of a distribution $u$ by $(\nabla \cdot u, v) \defeq -(u, \nabla \cdot v)$.    

In Section \ref{resolv bds mag Schro section} we make use of several mapping properties of $(P- z)^{-1}$ for $z$ in the resolvent set $\rho(P)$ of $P$, which we now formulate. Recall that the negative index Sobolev space $H^{-1}(\R^d)$ is isometrically isomorphic to the dual space of $H^1(\R^d)$ under the mapping $H^{-1}(\R^d) \ni u \mapsto \langle u , \cdot \rangle_{L^2}$, and that
\begin{equation*}
\| v\|_{H^{-1}} = \sup_{0 \neq u \in H^1} \frac{ \langle v, u \rangle_{L^2}}{\| u\|_{H^1}}. 
\end{equation*}
We show that for any $z \in \rho(P)$, $(P- z)^{-1}$ maps boundedly from $H^{-1}(\R^d)$ to $H^1(\R^d)$. In particular, there exists $C_z > 0$ so that 
\begin{equation} \label{H minus 1 to H1 bd}
\|(P- z)^{-1} u\|_{H^1} \le C_z \| u\|_{H^{-1}}, \qquad u \in C^\infty_0(\R^d). 
\end{equation}
We reuse the constant $C_z$ below. Its precise value changes from line to line, but it stays independent of $u$. 

By fixing $0 < \epsilon \ll 1$ in  \eqref{semiboundedness}, we get $C > 0$ independent of $u \in L^2(\R^d)$ such that
\begin{equation} \label{H1 to L2 bd}
\begin{split}
\| (P - z)^{-1} u \|^2_{H^1} &\le C (\| (P - z)^{-1} u  \|^2_{L^2} + q((P - z)^{-1} u , (P - z)^{-1} u ) \\
&= C (\| (P - z)^{-1} u  \|^2_{L^2} + \langle P(P - z)^{-1} u , (P - z)^{-1} u \rangle_{L^2}) \\
&= C (\| (P - z)^{-1} u  \|^2_{L^2} + \langle u , (P - z)^{-1} u \rangle_{L^2} + \overline{z} \| (P - z)^{-1} u\|^2_{L^2}). 
\end{split}
\end{equation}
Since $\| (P - z)^{-1} u \|_{L^2} \le C_z \|u\|_{L^2}$ this implies  
\begin{equation*}
\|(P- z)^{-1} u\|_{H^1} \le C_z \| u\|_{L^2}, \qquad u \in L^2(\R^d). 
\end{equation*}
Thus, for any $u \in C^\infty_0(\R^d)$ and $v \in L^2(\R^d)$,  
\begin{equation*}
|\langle (P- z)^{-1} u, v \rangle_{L^2}| = | \langle  u,  (P- \overline{z})^{-1}v \rangle_{L^2}| \le \|u \|_{H^{-1}} \|(P-z)^{-1}v \|_{H^1} \le C_z \| u \|_{H^{-1}} \| v \|_{L^2}. 
\end{equation*}
Therefore we conclude, 
\begin{equation} \label{H minus 1 to L2 bd}
\|(P- z)^{-1} u\|_{L^2} \le C_z \| u\|_{H^{-1}}, \qquad u \in C^\infty_0(\R^d). 
\end{equation}
Now \eqref{H minus 1 to H1 bd} follows from  \eqref{H1 to L2 bd} and \eqref{H minus 1 to L2 bd}. Indeed, for $u \in C^\infty_0(\R^d)$,

\begin{equation*}
\begin{split}
\| (P &- z)^{-1} u \|^2_{H^1}\\
 & \le  C (\| (P - z)^{-1} u  \|^2_{L^2} + \langle u , (P - z)^{-1} u \rangle_{L^2} + \overline{z} \|(P - z)^{-1} u\|^2_{L^2}) \\
& \le C_z (\| (P - z)^{-1} u  \|_{L^2} + \| u \|_{H^{-1}}) \| (P-z)^{-1} u\|_{H^1} \\
& \le C_z \|u \|_{H^{-1}}  \| (P-z)^{-1} u\|_{H^1} .
\end{split}
\end{equation*}

We use \eqref{H minus 1 to H1 bd} to verify a resolvent identity that we apply in Section \ref{resolv bds mag Schro section}. Let $P_0 = - \Delta$ denote the free Laplacian on $\R^d$. We show that for $u \in H^2(\R^d)$ and $z \in \rho(P)$,
\begin{equation} \label{key resolv id for mag Schro}
(P - z)^{-1} (\widetilde{V} + i\nabla \cdot b + ib \cdot \nabla ) u = u - (P - z)^{-1}(P_0 - z)u, 
\end{equation}
where $\widetilde{V} = V + |b|^2$ and we note that the divergence is a bounded operator $L^2(\R^d; \C^d) \to H^{-1}(\R^d)$. To show \eqref{key resolv id for mag Schro}, let $u_k \in C^\infty_0(\R^d ; \C^d)$ converge to $bu$ in $L^2(\R^d; \C^d)$, in which case by \eqref{H minus 1 to H1 bd} we have that $(P - z)^{-1} i\nabla \cdot u_{k} $ converges to $(P - z)^{-1} i\nabla \cdot bu$ in $L^2(\R^d)$. Then for any $v \in L^2(\R^d)$, 
\begin{equation*}
\begin{split}
\langle v, &(P - z)^{-1} (\widetilde{V} + i\nabla \cdot b + ib \cdot \nabla ) u \rangle_{L^2}  \\
&= \langle  -ib(P- \overline{z})^{-1} v,  \nabla u \rangle_{L^2(\R^d; \C^d)}+ \langle  \widetilde{V} 
(P- \overline{z})^{-1} v, u  \rangle_{L^2}  + \lim_{k \to \infty} \langle v, (P - z)^{-1} i \nabla \cdot u_k  \rangle_{L^2} \\
&= \langle  (P- \overline{z})^{-1} v, -\Delta u  \rangle_{L^2} -  \langle  ib(P- \overline{z})^{-1} v, 
 \nabla u \rangle_{L^2(\R^d; \C^d)}+ \langle ( \widetilde{V} + i b \cdot \nabla - \overline{z})
  (P- \overline{z})^{-1} v, u  \rangle_{L^2}  \\
&- \langle   v, (P- z)^{-1} (P_0 -z)   u  \rangle_{L^2}
\end{split}
\end{equation*}
Now \eqref{key resolv id for mag Schro} follows since $(-\Delta + i \nabla \cdot b + ib \cdot \nabla + \widetilde{V} - \overline{z}) 
(P- \overline{z})^{-1} v = v$ in the sense of distributions.

 \section{Analytic functions in a strip}
 
 Let the function $f(\lambda)$ be analytic in $\{\lambda\in\mathbb{C}: A<{\rm Re}\,\lambda,\,|{\rm Im}\,\lambda|<\gamma\}$, 
 where $\gamma>0$ is some constant, while $A$ is either a constant or $A=-\infty$. Let also $f$ satisfy in this region the bound
 \begin{equation}\label{eq:B.1}
|f(\lambda)|\le M
\end{equation}
with some constant $M>0$. Let $\gamma_1$ be any constant such that $0<\gamma_1<\gamma$. 
If $A$ is a constant we take any constant $A_1$ such that $A_1>A$. If $A=-\infty$ we take $A_1=-\infty$.
For such functions we will prove the following

 \begin{lemma} \label{B.1}
There exists a constant $C>0$ such that for $A_1\le{\rm Re}\,\lambda,\,|{\rm Im}\,\lambda|\le\gamma_1$ we have the bounds
\begin{equation}\label{eq:B.2}
|\partial_\lambda^kf(\lambda)|\le C^{k+1}k!
\end{equation}
for every integer $k\ge 0$, and
\begin{equation}\label{eq:B.3}
|f(\lambda)-f({\rm Re}\,\lambda)|\le C|{\rm Im}\,\lambda|.
\end{equation}
\end{lemma}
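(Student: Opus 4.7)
The plan is straightforward: both bounds will follow from Cauchy's integral formula applied to carefully chosen disks lying inside the strip of analyticity.

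First I would fix a radius $r > 0$ small enough that for every $\lambda$ satisfying $A_1 \le \operatorname{Re}\lambda$, $|\operatorname{Im}\lambda| \le \gamma_1$, the closed disk $\overline{D(\lambda,r)} = \{z \in \C : |z - \lambda| \le r\}$ is contained in the region of analyticity $\{A < \operatorname{Re}\lambda,\ |\operatorname{Im}\lambda| < \gamma\}$. Taking $r = \min(\gamma - \gamma_1,\ A_1 - A)$ (or simply $r = \gamma - \gamma_1$ when $A = -\infty$) works: the vertical distance to the horizontal boundaries is at least $\gamma - \gamma_1$, and the horizontal distance to the vertical boundary (if any) is at least $A_1 - A$.

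For (B.2), I would apply Cauchy's integral formula on the circle $|z - \lambda| = r$:
\begin{equation*}
\partial_\lambda^k f(\lambda) = \frac{k!}{2\pi i} \int_{|z-\lambda|=r} \frac{f(z)}{(z-\lambda)^{k+1}} dz.
\end{equation*}
Combining this with the hypothesis $|f| \le M$ on the strip immediately yields
\begin{equation*}
|\partial_\lambda^k f(\lambda)| \le \frac{M\,k!}{r^k},
\end{equation*}
and choosing $C \defeq \max(M, 1/r)$ gives $M/r^k \le C^{k+1}$, proving (B.2).

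For (B.3), I would parametrize a vertical segment from $\operatorname{Re}\lambda$ to $\lambda = \operatorname{Re}\lambda + i\operatorname{Im}\lambda$ (which stays in the region of analyticity) and write
\begin{equation*}
f(\lambda) - f(\operatorname{Re}\lambda) = i \int_0^{\operatorname{Im}\lambda} f'(\operatorname{Re}\lambda + it)\, dt.
\end{equation*}
Bounding the integrand by the $k = 1$ case of (B.2) produces
\begin{equation*}
|f(\lambda) - f(\operatorname{Re}\lambda)| \le |\operatorname{Im}\lambda|\, \sup_{\text{strip}} |f'| \le C^2 |\operatorname{Im}\lambda|,
\end{equation*}
which after enlarging $C$ gives (B.3). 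There is no serious obstacle here; the only mild subtlety is verifying that one can pick a single $r > 0$ that works uniformly over all $\lambda$ in the region specified, which is what the gap between $\gamma_1$ and $\gamma$ (and between $A_1$ and $A$) is designed to provide.
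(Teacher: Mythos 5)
Your proposal is correct and follows essentially the same route as the paper: Cauchy's formula on a circle of fixed radius staying inside the larger strip for \eqref{eq:B.2}, and then \eqref{eq:B.3} from the $k=1$ case applied along the vertical segment (the paper phrases this via a mean-value-type identity, you via the integral form, which is an immaterial difference).
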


{\it Proof.} The bound (\ref{eq:B.2}) follows from (\ref{eq:B.1}) and the Cauchy formula
\begin{equation}\label{eq:B.4}
\partial_\lambda^kf(\lambda)=\frac{k!}{2\pi i}\int_{|z-\lambda|=\sigma}\frac{f(z)}{(z-\lambda)^{k+1}}dz
\end{equation}
for every integer $k\ge 0$, where $\sigma$ is a constant such that $0<\sigma<\gamma-\gamma_1$. If $A$ and $A_1$ are constants we also require that
$\sigma<A_1-A$. Furthermore, we have
\begin{equation*}
f(\lambda)-f({\rm Re}\,\lambda)=i{\rm Im}\,\lambda f'({\rm Re}\,\lambda+it)
\end{equation*}
with some real $t$ such that $|t|\le|{\rm Im}\,\lambda|$, where $f'$ denotes the first derivative of 
$f$. Therefore, (\ref{eq:B.3}) follows from (\ref{eq:B.2}) with $k=1$. 
\eproof

\section{Resolvent bounds for the free resolvent} \label{free resolv appendix}

 The following estimates for the free resolvent are well-known and therefore we omit the proof.
 
  \begin{lemma} \label{C.1}
 Let $d\ge 2$, $s>1/2$, and let $\alpha$ and $\beta$ be multi-indices such that $|\alpha|+|\beta|\le 2$. 
 Then, given any $\delta>0$, we have the bound
 \begin{equation}\label{eq:C.1}
 \left\|\langle x\rangle^{-s}\partial_x^\alpha(P_0-\lambda^2\pm i\varepsilon)^{-1}\partial_x^\beta\langle x\rangle^{-s}\right\|
\le C\lambda^{|\alpha|+|\beta|-1},\quad \lambda\ge\delta,
 \end{equation}
 uniformly in $\varepsilon$. If $|\alpha|+|\beta|\ge 1$, we have the bound
 \begin{equation}\label{eq:C.2}
 \left\|\langle x\rangle^{-s}\partial_x^\alpha(P_0-\lambda^2\pm i\varepsilon)^{-1}\partial_x^\beta\langle x\rangle^{-s}\right\|
\le C,\quad 0<\lambda\le\delta,
 \end{equation}
 uniformly in $\varepsilon$. If $d\ge 3$ and $s>1$ we have the bound
 \begin{equation}\label{eq:C.3}
 \left\|\langle x\rangle^{-s}(P_0-\lambda^2\pm i\varepsilon)^{-1}\langle x\rangle^{-s}\right\|
\le C,\quad 0<\lambda\le\delta,
 \end{equation}
 uniformly in $\varepsilon$. 
 \end{lemma}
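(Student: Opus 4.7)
The plan is to exploit the explicit kernel of $(P_0-\lambda^2\pm i\varepsilon)^{-1}$ on $\R^d$, which in the notation of the paper can be written as
\begin{equation*}
G^\pm_\lambda(x,y) = C_d\, z^{d-2} E_d^\pm(z|x-y|), \qquad z^2 = \lambda^2\mp i\varepsilon,\ \pm\Im z>0,
\end{equation*}
together with the near-zero bound $|\partial^k_\zeta E_d^\pm(\zeta)|\lesssim |\zeta|^{-(d-2)-k}$ from (5.10) and the far-field asymptotic $|E_d^\pm(\zeta)|\lesssim |\zeta|^{-(d-1)/2}$ coming from the large-argument expansion of the Hankel functions. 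I would treat the high-frequency range $\lambda\ge\delta$ and the low-frequency range $0<\lambda\le\delta$ separately, and reduce the derivative cases $\ell_1+\ell_2\ge 1$ to the scalar ones via the algebraic identity
\begin{equation*}
\sum_j\partial_j(P_0-\lambda^2)^{-1}\partial_j = -I-\lambda^2(P_0-\lambda^2)^{-1}
\end{equation*}
combined with the factorization $\partial_j(P_0-\lambda^2)^{-1}\partial_k = -R_jR_k(I+\lambda^2(P_0-\lambda^2)^{-1})$, where $R_j$ are the Riesz transforms.

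For the high-frequency estimate \eqref{eq:C.1}, I would invoke Agmon's limiting absorption principle. Via Plancherel's theorem and the co-area formula, the sesquilinear form of the resolvent reduces to a Stieltjes-type integral of restrictions of $\widehat f$ and $\widehat g$ to spheres $|\xi|=\mu$; the trace/restriction estimate
\begin{equation*}
\|\widehat f|_{S_\mu}\|_{L^2(S_\mu)}\lesssim \mu^{(d-2)/2}\|\langle x\rangle^s f\|_{L^2}, \qquad s>1/2,
\end{equation*}
then controls the singularity at $\mu=\lambda$ and produces the $\lambda^{-1}$ factor. The derivative versions follow by inserting the factorization above: each power of $\nabla$ on the right or left costs at most a factor of $\lambda$ (after Riesz-transform boundedness and a Hardy-type control of the commutator with the weight $\langle x\rangle^{-s}$), yielding the claimed $\lambda^{-1+\ell_1+\ell_2}$ growth.

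For the low-frequency bound \eqref{eq:C.3} in dimension $d\ge 3$ with $s>1$, I would combine the two kernel regimes into the uniform estimate
\begin{equation*}
|G^\pm_\lambda(x,y)|\lesssim |x-y|^{-(d-2)},\qquad 0<\lambda\le\delta,
\end{equation*}
(the oscillatory tail is dominated by the near-diagonal bound in this range) and then apply the Stein--Weiss fractional integration inequality, or a direct Schur test, to conclude that the weighted kernel $\langle x\rangle^{-s}|x-y|^{-(d-2)}\langle y\rangle^{-s}$ acts boundedly on $L^2(\R^d)$ precisely when $s>1$. For \eqref{eq:C.2} with $\ell_1+\ell_2\ge 1$, the same strategy applies: one derivative introduces an extra $|x-y|^{-1}$ singularity so that the kernel $\langle x\rangle^{-s}|x-y|^{-(d-1)}\langle y\rangle^{-s}$ is still of Stein--Weiss type for $s>1/2$, and the case $\ell_1+\ell_2=2$ (relevant only if $d\ge 2$) is reduced to lower orders by the identity in the first paragraph and the $L^2$-boundedness of Riesz transforms, which removes the non-integrable singularity.

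The main technical obstacle will be the transition regime $\lambda|x-y|\sim 1$ where the two pieces of the kernel asymptotics meet, and, at high frequency, extracting the sharp $\lambda^{-1}$ decay from the oscillatory integral $\int\langle x\rangle^{-s}\lambda^{(d-3)/2}|x-y|^{-(d-1)/2}e^{\pm i\lambda|x-y|}\langle y\rangle^{-s}f\,dy$; the naive Cauchy--Schwarz argument only yields $\lambda^{-1+\epsilon}$, and one must invoke the sharp trace inequality of Agmon--Stein--Tomas type to close the estimate. All of these ingredients are classical and can be found in Agmon's original paper, which justifies omitting the detailed proof here.
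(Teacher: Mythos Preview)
The paper does not prove this lemma; it simply records the bounds as well known and omits the argument. Your high-frequency outline via Agmon's limiting absorption principle and the trace inequality is the standard route and is correct, and the Riesz-transform reduction for the two-derivative case is also sound, since $\langle x\rangle^{-2s}$ is an $A_2$ weight for $0<s<d/2$.

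Your low-frequency argument, however, has a genuine gap. The claimed uniform pointwise bound $|G^\pm_\lambda(x,y)|\lesssim |x-y|^{-(d-2)}$ for $0<\lambda\le\delta$ is \emph{false} when $d\ge 4$: in the far field $\lambda|x-y|\ge 1$ the Hankel asymptotics give $|G^\pm_\lambda|\sim \lambda^{(d-3)/2}|x-y|^{-(d-1)/2}$, and for $d\ge 4$ this dominates $|x-y|^{-(d-2)}$ (take $\lambda=R^{-1/2}$, $|x-y|=R\to\infty$; the ratio grows like $R^{(d-3)/4}$). Your parenthetical ``the oscillatory tail is dominated by the near-diagonal bound'' holds only in $d=3$, where the kernel is exactly $(4\pi|x-y|)^{-1}e^{\pm i\lambda|x-y|}$. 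The same obstruction hits your treatment of \eqref{eq:C.2}: the far field of $\nabla_x G^\pm_\lambda$ is of size $\lambda^{(d-1)/2}|x-y|^{-(d-1)/2}$, which is not controlled by $|x-y|^{-(d-1)}$ for any $d\ge 2$, and a Stein--Weiss bound on the tail kernel $|x-y|^{-(d-1)/2}$ would require $2s\ge (d+1)/2$, not merely $s>1/2$. To close the low-frequency estimates one must use the oscillation---e.g.\ run the same Fourier-restriction argument near the small sphere $|\xi|=\lambda$, where the trace constant supplies extra smallness as the radius tends to zero---or perturb off the explicit operator $\langle x\rangle^{-s}P_0^{-1}\langle x\rangle^{-s}$ (bounded for $d\ge 3$, $s>1$ by Hardy) via a resolvent identity. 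Either route is classical, but the pointwise-kernel sketch as written does not close beyond $d=3$.
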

 
 Next lemma is well-known when the function $\mu$ is compactly supported. We show that it still holds with
 $\mu=e^{-c\langle x\rangle/2}$, $c>0$.

\begin{lemma} \label{C.2}
Let $\alpha$ and $\beta$ be multi-indices such that $|\alpha|+|\beta|\le 2$. Then, 
there exists a constant $\gamma_0>0$ such that the operator-valued function  
\begin{equation}\label{eq:C.4}
\mu\partial_x^\alpha(P_0-\lambda^2)^{-1}\partial_x^\beta\mu:L^2\to L^2
\end{equation}
extends analytically from $\mathbb{C}^-$ to $\mathcal{L}_{\gamma_0}$
 and satisfies the bound 
\begin{equation}\label{eq:C.5}
\left\|\mu\partial_x^\alpha(P_0-\lambda^2)^{-1}\partial_x^\beta\mu\right\|\le C(|\lambda|+1)^{|\alpha|+|\beta|-1}
\end{equation}
for $\lambda\in \mathcal{L}_{\gamma_0}$, $|\lambda|\ge\delta$, $\delta>0$ being arbitrary, 
 with a constant $C$ depending on $\delta$. We also have the bound
\begin{equation}\label{eq:C.6}
\left\|\mu\partial_x^\alpha(P_0-\lambda^2)^{-1}\partial_x^\beta\mu-\mu\partial_x^\alpha
(P_0-({\rm Re}\,\lambda)^2)^{-1}\partial_x^\beta\mu\right\|\le C|{\rm Im}\,\lambda|(|\lambda|+1)^{|\alpha|+|\beta|-1}
\end{equation}
for $\lambda\in \mathcal{L}_{\gamma'_0}$, $|\lambda|\ge\delta$, where $0<\gamma'_0<\gamma_0$ is a constant.
When $d\ge 3$ is odd (\ref{eq:C.5}) holds for all $\lambda\in \mathcal{L}_{\gamma_0}$ and (\ref{eq:C.6}) holds for all $\lambda\in \mathcal{L}_{\gamma'_0}$. 
\end{lemma}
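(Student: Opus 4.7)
The plan is to exploit the explicit integral kernel of the free resolvent,
\[
R_0(\lambda; x, y) = \frac{i}{4}\Bigl(\frac{\lambda}{2\pi|x-y|}\Bigr)^{(d-2)/2} H^{(1)}_{(d-2)/2}(\lambda|x-y|),
\]
together with the analytic structure of the Hankel function $H^{(1)}_\nu$. When $d$ is odd with $d \ge 3$, the half-integer-order Hankel function yields, after multiplication by $\lambda^{(d-2)/2}$, an entire function of $\lambda$, so $R_0$ extends holomorphically to all of $\C$; when $d$ is even, there is a logarithmic branch point at $\lambda = 0$, which explains why continuation is to the logarithmic surface $\mathcal{L}$. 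The large-argument asymptotics $H^{(1)}_\nu(z) \sim \sqrt{2/(\pi z)}\,e^{i(z - \nu\pi/2 - \pi/4)}$ show that $R_0(\lambda;x,y)$ behaves like $|\lambda|^{(d-3)/2}|x-y|^{-(d-1)/2}e^{|\Im\lambda|\cdot|x-y|}$ as $|x-y|\to\infty$ on the non-physical sheet $\Im\lambda<0$.

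To establish \eqref{eq:C.5}, I would estimate the integral operator on $L^2(\R^d)$ with kernel $\mu(x)\nabla_x^\ell R_0(\lambda; x, y)\mu(y)$ by Schur's test. Split the kernel into a near-diagonal piece $|\lambda||x-y| \le 1$ and a far piece $|\lambda||x-y| \ge 1$. On the near piece, the small-argument Hankel asymptotics produce a pointwise bound of order $|x-y|^{-(d-2)-\ell}$ (with logarithmic corrections in even dimensions), which is locally $L^1$ in $y$ uniformly in $x$ since $d\ge 2$; integrating over balls of radius $\sim |\lambda|^{-1}$ contributes at most $|\lambda|^{\ell-1}$. On the far piece, use the large-argument asymptotics and pair the exponential growth $e^{|\Im\lambda|\cdot|x-y|}$ against the dominant decay $\mu(x)\mu(y) \lesssim e^{-c|x-y|/4}$; choosing $\gamma_0 < c/4$ makes the resulting kernel integrable uniformly in $\lambda \in \mathcal{L}_{\gamma_0}$, and the $|x-y|^{-(d-1)/2}$ prefactor combined with the $|\lambda|^{(d-3)/2}$ from the asymptotics reproduces the advertised $(|\lambda|+1)^{\ell-1}$. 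Analyticity of $\mu\nabla^\ell(P_0-\lambda^2)^{-1}\mu$ on $\mathcal{L}_{\gamma_0}$ then follows from the holomorphy of the kernel in $\lambda$ and the uniform convergence of the integral just obtained.

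The estimate \eqref{eq:C.6} is immediate from \eqref{eq:C.5} together with Lemma \ref{B.1}: applied to the analytic function $f(\lambda) = \mu\nabla^\ell(P_0-\lambda^2)^{-1}\mu$, which by \eqref{eq:C.5} is bounded by $M = C(|\lambda|+1)^{\ell-1}$ on any intermediate strip, the bound \eqref{eq:B.3} yields the Lipschitz statement on $\mathcal{L}_{\gamma'_0}$. The main obstacle is the $\lambda$-uniformity in \eqref{eq:C.5} as $|\Im\lambda|\to\gamma_0$; this forces $\gamma_0$ strictly less than $c/2$, so that the weight $\mu(x)\mu(y)$ strictly dominates the kernel growth. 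A secondary subtlety is verifying analyticity at $\lambda = 0$ in odd dimensions: one invokes the fact that spherical Hankel functions of half-integer order have a closed form as $e^{iz}$ times a polynomial in $1/z$, so that the $\lambda^{(d-2)/2}$ prefactor cancels the singularity and the full kernel is entire in $\lambda$; no analogous cancellation occurs in even dimensions, which is why the improved statement there requires $|\lambda|\ge\delta$.
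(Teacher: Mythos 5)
Your kernel-based argument correctly gives the analytic continuation (the $\mu(x)\mu(y)\le e^{-c|x-y|/2}$ domination with $\gamma_0<c/2$, plus Schur), the bound near $\lambda=0$ for $d\ge 3$, and the deduction of \eqref{eq:C.6} from \eqref{eq:C.5} via Lemma \ref{B.1}; all of this matches the paper. But the central claim for $|\lambda|\ge\delta$ has a genuine gap: the far-region Schur estimate does not "reproduce the advertised $(|\lambda|+1)^{\ell-1}$". On the region $|\lambda||x-y|\ge 1$ the pointwise bound on the kernel is
\begin{equation*}
|\mu(x)\,\partial_x^\alpha K(x,y;\lambda)\,\mu(y)|\ \lesssim\ |\lambda|^{(d-3)/2+|\alpha|}\,|x-y|^{-(d-1)/2}\,e^{-(c/2-\gamma_0)|x-y|},
\end{equation*}
and Schur's test then yields $\sup_x\int(\cdots)\,dy\lesssim |\lambda|^{(d-3)/2+|\alpha|}$, since the $y$-integral of $|x-y|^{-(d-1)/2}e^{-(c/2-\gamma_0)|x-y|}$ is a $\lambda$-independent constant (the lower cutoff $|x-y|\ge|\lambda|^{-1}$ gains nothing as $|\lambda|\to\infty$). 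For every $d\ge 3$ this exceeds the target $|\lambda|^{\ell-1}$ by the factor $|\lambda|^{(d-1)/2}$, and no absolute-value estimate on the kernel can recover it: in the region $\Im\lambda>0$ the kernel genuinely grows like $e^{\Im\lambda|x-y|}$ and the loss reflects cancellation in the oscillatory factor $e^{i\Re\lambda|x-y|}$ that is invisible to Schur's test (already on the real axis the sharp weighted bound $O(|\lambda|^{-1})$ is a limiting-absorption statement, not a pointwise-kernel statement).

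The missing ingredient, which is how the paper proceeds (following Burq), is to estimate not $R_0(\lambda)$ itself but the jump $\mu\partial_x^\alpha R_0(\lambda)\mu-\mu\partial_x^\alpha R_0(-\lambda)\mu$, which by \eqref{eq:C.10} factorizes as $c_d\,\lambda^{d-2+|\alpha|}\mathcal{A}^{(\alpha)}_\mu(\lambda)\mathcal{A}^{(0)}_\mu(\overline\lambda)^*$ with $\mathcal{A}^{(\alpha)}_\mu(\lambda):L^2(\mathbb{S}^{d-1})\to L^2(\R^d)$ having kernel $i^{|\alpha|}w^\alpha\mu(x)e^{i\lambda\langle x,w\rangle}$. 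The operator norm $\|\mathcal{A}^{(\alpha)}_\mu(\lambda)\|\lesssim|\lambda|^{-(d-1)/2}$ is proved through Plancherel and the rapid decay of $\mathcal{F}\mu$ in a complex strip (this is where the exponential weight, rather than a polynomial one, is essential), giving the jump bound $O(|\lambda|^{-1+|\alpha|})$ uniformly for $|\Im\lambda|\le\gamma_0$. Since the desired bound is elementary on $\Im\lambda=-\gamma_0$ (spectral theorem on the physical side), the jump estimate transfers it to $\Im\lambda=+\gamma_0$, and the Phragm\'en--Lindel\"of principle fills in the strip. Without this factorization-plus-interpolation step (or some substitute exploiting the oscillation), your argument only proves \eqref{eq:C.5} with the weaker exponent $(d-3)/2+\ell$, which is insufficient for the uses of \eqref{eq:C.5}--\eqref{eq:C.6} in Sections \ref{resolv bds mag Schro section} and \ref{resolv bds nontrap obs section}.
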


{\it Proof.} Note first that (\ref{eq:C.6}) follows from (\ref{eq:C.5}) and (\ref{eq:B.3}). Since the operator 
$\partial_x^\beta$ commutes with the free resolvent, it suffices to prove the lemma with $\beta=0$ and $|\alpha|\le 2$.

It is well-known that the kernel $K(x,y;\lambda)$ of the free resolvent 
\begin{equation*}
R_0(\lambda)=(P_0-\lambda^2)^{-1},\quad {\rm Im}\,\lambda<0,
\end{equation*}
can be expressed in terms of the Hankel functions by the formula
\begin{equation*}
K(x,y;\lambda)=i2^{-2}(2\pi)^{-\frac{d-2}{2}}\lambda^{\frac{d-2}{2}}|x-y|^{-\frac{d-2}{2}}H^-_{\frac{d-2}{2}}(\lambda|x-y|).
\end{equation*}
It is also well-known that $z^{\frac{d-2}{2}}H^-_{\frac{d-2}{2}}(z)$ extends analytically from $\mathbb{C}^-$ to
the complex plane $\mathbb{C}$ if $d$ is odd and to the Riemann surface of the logarithm if $d$ is even
and satisfies the bounds 
\begin{equation}\label{eq:C.7}
\left|H^-_{\frac{d-2}{2}}(z)\right|\lesssim 
\left\{
\begin{array}{lll}
|\log|z||+1&\mbox{for}& |z|\le 1,\,d=2,\\
 |z|^{-\frac{d-2}{2}}&\mbox{for}& |z|\le 1,\,d\ge 3,\\
 |z|^{-1/2}e^{{\rm Im}\,z}&\mbox{for}& |z|\ge 1,\,d\ge 2.
\end{array}
\right.
\end{equation}
Hence the kernel $K$ extends analytically in $\lambda$ from $\mathbb{C}^-$ to
the complex plane $\mathbb{C}$ if $d$ is odd and to the Riemann surface of the logarithm if $d$ is even and  
satisfies the bound
\begin{equation}\label{eq:C.8}
|K(x,y;\lambda)|\lesssim G_d(|x-y|)+|\lambda|^{(d-3)/2}|x-y|^{-(d-1)/2}e^{{\rm Im}\,\lambda|x-y|},
\end{equation}
where 
\begin{equation*}
G_d(\sigma)=
\left\{
\begin{array}{lll}
|\log\sigma|+|\log|\lambda||+1&\mbox{if}& d=2,\\
 \sigma^{-d+2}&\mbox{if}& d\ge 3.
\end{array}
\right.
\end{equation*}
Fix a constant $0<\gamma_0<c/2$. Since
\begin{equation*}
\mu(x)\mu(y)\le e^{-c|x-y|/2},
\end{equation*}
we deduce from (\ref{eq:C.8}),
\begin{equation}\label{eq:C.9}
|\mu(x)K(x,y;\lambda)\mu(y)|\lesssim \left(G_d(|x-y|)
+|\lambda|^{(d-3)/2}|x-y|^{-(d-1)/2}\right)e^{-(c/2-\gamma_0)|x-y|}
\end{equation}
for ${\rm Im}\,\lambda\le \gamma_0$. It follows from (\ref{eq:C.9}) and Schur's lemma that the operator 
$\mu R_0(\lambda)\mu$,  
is bounded on $L^2$ for ${\rm Im}\,\lambda\le\gamma_0$ with norm $O\left(1+|\lambda|^{(d-3)/2}\right)$. 
Therefore the operator 
$\mu R_0(\lambda)\mu: L^2\to L^2$
extends analytically from $\mathbb{C}^-$ to $\mathcal{L}_{\gamma_0}$. 
This also implies the bound (\ref{eq:C.5}) with $\alpha=\beta=0$ for $|\lambda|\le 1$ if $d\ge 3$. 

To prove the bound (\ref{eq:C.5}) in the other cases we will follow \cite{kn:B2} where (\ref{eq:C.5}) with $\alpha=\beta=0$
is proved for compactly supported $\mu$ (see Proposition 2.1 of \cite{kn:B2}). When $\mu$ decays exponentially the proof
 is the same but we
will sketch the main points for the sake of completeness. It is based on the formula
\begin{equation}\label{eq:C.10}
K(x,y;\lambda)-K(x,y;-\lambda)=i2^{-1}(2\pi)^{1-d}\lambda^{d-2}\int_{\mathbb{S}^{d-1}}e^{i\lambda\langle x-y,w\rangle}dw,
\end{equation}
where $\mathbb{S}^{d-1}$ denotes the unit sphere in $\mathbb{R}^d$. From (\ref{eq:C.10}) we get the formula
\begin{equation}\label{eq:C.11}
\mu\partial_x^\alpha R_0(\lambda)\mu-\mu\partial_x^\alpha R_0(-\lambda)\mu =i2^{-1}(2\pi)^{1-d}\lambda^{d-2+|\alpha|}\mathcal{A}_\mu^{(\alpha)}(\lambda)
\mathcal{A}_\mu^{(0)}(\overline\lambda)^*,
\end{equation}
for any muli-index $\alpha$, where
\begin{equation*}
\mathcal{A}_\mu^{(\alpha)}(\lambda):L^2(\mathbb{S}^{d-1})\to L^2(\mathbb{R}^d)
\end{equation*}
is the operator with kernel
\begin{align*}
A_\mu^{(\alpha)}(x,w)=i^{|\alpha|}w^{\alpha}\mu(x)e^{i\lambda\langle x,w\rangle},\quad x\in\mathbb{R}^d,\,w\in \mathbb{S}^{d-1}.
\end{align*}
Our goal is to prove the bound
\begin{equation}\label{eq:C.12}
\left\|\mu\partial_x^\alpha R_0(\lambda)\mu-\mu\partial_x^\alpha R_0(-\lambda)\mu \right\|\lesssim |\lambda|^{-1+|\alpha|}
\end{equation}
for all $\lambda\in\mathbb{C}$, $\lambda\neq 0$, such that $|{\rm Im}\,\lambda|\le\gamma_0$.
In view of (\ref{eq:C.11}), it suffices to prove the bound
\begin{equation}\label{eq:C.13}
\left\|\mathcal{A}_\mu^{(\alpha)}(\lambda)\right\|_{L^2(\mathbb{S}^{d-1})\to L^2(\mathbb{R}^d)}\lesssim |\lambda|^{-\frac{d-1}{2}}.
\end{equation}
On the other hand, in view of Plancherel's identity, the norm in (\ref{eq:C.13}) is equivalent to the norm of the operator
\begin{equation*}
\mathcal{F}\mathcal{A}_\mu^{(\alpha)}(\lambda):L^2(\mathbb{S}^{d-1})\to L^2(\mathbb{R}^d),
\end{equation*}
where $\mathcal{F}$ is the Fourier transform. Since the kernel of this operator is equal to 
$i^{|\alpha|}w^{\alpha}(\mathcal{F}\mu)(\xi-i\lambda w)$,
by Schur's lemma it suffices to show that
\begin{equation}\label{eq:C.14}
\int_{\mathbb{R}^{d}}|(\mathcal{F}\mu)(\xi-i\lambda w)|d\xi\lesssim 1,\quad 
\int_{\mathbb{S}^{d-1}}|(\mathcal{F}\mu)(\xi-i\lambda w)|dw\lesssim |\lambda|^{-d+1}
\end{equation}
for all $\lambda\in\mathbb{C}$, $\lambda\neq 0$, such that $|{\rm Im}\,\lambda|\le\gamma_0$.
To this end, we will use that $(\mathcal{F}\mu)(\xi)$ extends to all $\xi\in\mathbb{C}^d$ such that 
$|{\rm Im}\,\xi|\le\gamma_0$ and satisfies the bounds
\begin{equation*}
\left|\xi^\beta(\mathcal{F}\mu)(\xi)\right|\lesssim \int_{\mathbb{R}^{d}}\left|\partial_x^\beta\mu(x)\right|e^{|{\rm Im}\,\xi||x|}dx
\lesssim \int_{\mathbb{R}^{d}}\mu(x)e^{|{\rm Im}\,\xi||x|}dx\lesssim\int_{\mathbb{R}^{d}}e^{-(c/2-\gamma_0)|x|}dx\lesssim 1
\end{equation*}
for all multi-indices $\beta$. 
Thus we obtain that given any integer $M\ge 0$ there is a constant $C_M>0$ such that
\begin{equation}\label{eq:C.15}
\left|(\mathcal{F}\mu)(\xi)\right|\le C_M(|\xi|+1)^{-M}
\end{equation}
for all $\xi\in\mathbb{C}^d$ such that $|{\rm Im}\,\xi|\le\gamma_0$. We now apply (\ref{eq:C.15}) with $\xi-i\lambda w$, 
$\xi\in\mathbb{R}^d$. Then the bounds (\ref{eq:C.14}) follow from (\ref{eq:C.15}) in the same way as in Section 2
of \cite{kn:B2}.

It is easy to see now that the estimate (\ref{eq:C.12}) implies (\ref{eq:C.5}). Indeed, since 
the bound (\ref{eq:C.5}) is trivial on ${\rm Im}\,\lambda=-\gamma_0$, by (\ref{eq:C.12}) with $|\alpha|\le 2$
we conclude that it also holds on ${\rm Im}\,\lambda=\gamma_0$. Then the Phragm\'en-Lindel\"of principle implies that 
(\ref{eq:C.5}) holds for $|{\rm Im}\,\lambda|\le\gamma_0$.
\eproof

\section{Poincar\'e inequality} \label{Poincare appendix}
 
 \begin{lemma} \label{D.1}
 Let $d\ge 3$. Then, given a function $b\in L^\infty(\mathbb{R}^d,\mathbb{R}^d)$, we have the inequality
 \begin{equation}\label{eq:D.1}
 \left\||x|^{-1}f\right\|_{L^2(\mathbb{R}^d)}\lesssim \left\|(i\nabla+b)f\right\|_{L^2(\mathbb{R}^d)}
 \end{equation}
 for all $f\in H^1(\mathbb{R}^d)$. If $\mathcal{O}\subset\mathbb{R}^d$, $d\ge 3$, is a bounded domain with smooth boundary such that 
$\Omega=\mathbb{R}^d\setminus\mathcal{O}$ is connected, then we have the inequality
 \begin{equation}\label{eq:D.2}
 \left\||x|^{-1}f\right\|_{L^2(\Omega)}\lesssim \left\|\nabla f\right\|_{L^2(\Omega)}
 \end{equation}
 for all $f\in H_0^1(\Omega)$.
 \end{lemma}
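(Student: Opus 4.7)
The plan is to reduce both inequalities to the classical (unweighted) Hardy inequality
\begin{equation*}
\left\||x|^{-1} u\right\|_{L^2(\mathbb{R}^d)}\le \frac{2}{d-2}\left\|\nabla u\right\|_{L^2(\mathbb{R}^d)},\qquad u\in H^1(\mathbb{R}^d),
\end{equation*}
which is valid when $d\ge 3$. I would first establish this scalar inequality by density, approximating $u$ in $H^1(\mathbb{R}^d)$ by functions in $C_0^\infty(\mathbb{R}^d\setminus\{0\})$ and integrating by parts against the vector field $x/|x|^2$, using $\nabla\cdot(x/|x|^2)=(d-2)/|x|^2$ on $\mathbb{R}^d\setminus\{0\}$. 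This yields
\begin{equation*}
(d-2)\left\||x|^{-1}u\right\|_{L^2}^2 = -2\,{\rm Re}\int_{\mathbb{R}^d}\frac{x}{|x|^2}\cdot \overline{u}\,\nabla u\,dx\le 2\left\||x|^{-1}u\right\|_{L^2}\left\|\nabla u\right\|_{L^2}
\end{equation*}
and dividing through gives the scalar bound.

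To pass from the scalar Hardy inequality to \eqref{eq:D.1}, the key ingredient is the diamagnetic inequality
\begin{equation*}
\bigl|\nabla|f|(x)\bigr|\le \bigl|(i\nabla+b)f(x)\bigr|\quad\text{for a.e. } x\in\mathbb{R}^d,\qquad f\in H^1(\mathbb{R}^d).
\end{equation*}
I would first check this for smooth $f$ not vanishing in a neighborhood of the point in question by writing $f=|f|e^{i\theta}$ locally and noting
\begin{equation*}
\bigl|(i\nabla+b)f\bigr|^2 = \bigl|\nabla|f|\bigr|^2 + |f|^2\bigl|\nabla\theta - b\bigr|^2,
\end{equation*}
and then deal with the zero set by a standard regularization (replace $|f|$ by $(|f|^2+\epsilon^2)^{1/2}-\epsilon$ and let $\epsilon\to 0$), after which the inequality propagates to all of $H^1(\mathbb{R}^d)$ by approximation. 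Combined with the scalar Hardy inequality applied to $u=|f|\in H^1(\mathbb{R}^d)$, this yields \eqref{eq:D.1}.

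For \eqref{eq:D.2}, since $0\in \mathcal{O}$, the weight $|x|^{-1}$ is bounded on a neighborhood of $\partial\Omega$. A function $f\in H^1(\Omega)$ vanishing on $\partial\Omega$ extends by zero to a function $\widetilde f\in H^1(\mathbb{R}^d)$ with $\|\nabla\widetilde f\|_{L^2(\mathbb{R}^d)}=\|\nabla f\|_{L^2(\Omega)}$, so applying the scalar Hardy inequality to $\widetilde f$ gives \eqref{eq:D.2}. The main (minor) technical point is the diamagnetic inequality; the Hardy-type integration by parts is routine once the weight $|x|^{-1}$ has been regularized to avoid the origin.
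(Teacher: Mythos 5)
Your proof is correct, but it takes a genuinely different route from the paper. The paper works directly in polar coordinates: it gauges away the magnetic potential along each ray by setting $u(r,w)=f(rw)e^{-i\int_0^r w\cdot b(\sigma w)\,d\sigma}$, proves a one–dimensional radial Hardy inequality for $u$ by integrating $|u|^2(r^{d-2})'$ by parts in $r$ and applying Cauchy–Schwarz, and then uses the pointwise bound $|\partial_r u|=|w\cdot(i\nabla+b)f|\le|(i\nabla+b)f|$; this is completely self-contained and only needs the radial component of the magnetic gradient. You instead reduce to the classical scalar Hardy inequality via Kato's diamagnetic inequality $\bigl|\nabla|f|\bigr|\le|(i\nabla+b)f|$ applied to $|f|\in H^1(\mathbb{R}^d)$. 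Both arguments are valid: yours buys a clean modular structure (Hardy plus a standard magnetic lemma) at the cost of importing the diamagnetic inequality, whose rigorous proof requires the $\sqrt{|f|^2+\epsilon^2}$ regularization you sketch, whereas the paper's radial gauge trick is a short elementary computation with no external input. For \eqref{eq:D.2} the two approaches are essentially equivalent: the paper approximates by $C_0^\infty(\Omega)$ functions and uses Fatou's lemma together with \eqref{eq:D.1}, while you extend by zero to $H^1(\mathbb{R}^d)$ and apply the scalar Hardy inequality (your remark that $|x|^{-1}$ is bounded near $\partial\Omega$ is not actually needed, since the zero extension kills the region containing the origin). The only small points to tighten are the ones you already flag: the justification of the integration by parts near the origin (excise a ball of radius $\epsilon$ and use $d\ge 3$ to kill the boundary term) and the density/Fatou step passing from smooth compactly supported functions to general $f\in H^1$.
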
 
 
 {\it Proof.} Clearly, it suffices to prove (\ref{eq:D.1}) for all functions $f\in C_0^1(\mathbb{R}^d)$.
 Let $(r,w)\in (0,\infty)\times \mathbb{S}^{d-1}$ be the polar coordinates and set
 \begin{equation*}
 u(r,w)=f(rw)e^{-i\int_0^rw\cdot b(\sigma w)d\sigma}.
 \end{equation*}
  We have
 \begin{equation*}
 \begin{split}
 \int_0^\infty r^{d-3}|u(r,w)|^2dr&=(d-2)^{-1}\int_0^\infty |u(r,w)|^2 (r^{d-2})'dr\\
 &=-2(d-2)^{-1}{\rm Re}\,\int_0^\infty u'(r,w)
 \overline{u(r,w)}r^{d-2}dr,
 \end{split}
 \end{equation*}
 where the prime notation denotes the first derivative of a function with respect to $r$. Hence
 \begin{equation*}
 \int_0^\infty r^{d-3}|u(r,w)|^2dr\lesssim 
 \left(\int_0^\infty r^{d-1}|u'(r,w)|^2dr\right)^{1/2} \left(\int_0^\infty r^{d-3}|u(r,w)|^2dr\right)^{1/2}
\end{equation*}
 which implies
 \begin{equation*}
 \int_0^\infty r^{d-3}|u(r,w)|^2dr\lesssim \int_0^\infty r^{d-1}|u'(r,w)|^2dr.
 \end{equation*}
 Integrating this inequlity with respect to $w$ leads to the estimate
 \begin{equation}\label{eq:D.3}
 \left\|r^{-1}u\right\|_{L^2(\mathbb{R}^d)}\lesssim \left\|\partial_r u\right\|_{L^2(\mathbb{R}^d)}.
 \end{equation}
 Observe now that
 \begin{equation}\label{eq:D.4}
 \left|\partial_r u\right|=\left|(i\partial_r+w\cdot b(rw))f\right|=\left|w\cdot(i\nabla+ b(rw))f\right|
 \le \left|(i\nabla+ b)f\right|.
 \end{equation}
 Clearly, (\ref{eq:D.1}) follows from (\ref{eq:D.3}) and (\ref{eq:D.4}). 
 
The inequality (\ref{eq:D.2}) follows from (\ref{eq:D.1}) in the following manner. Given $f \in H^1(\Omega)$ with $f = 0$ on $\partial \Omega$, by \cite[Theorem 2, section 5.5]{kn:EV}, there exists a sequence $f_k \in C^\infty_0(\Omega) \subseteq C^\infty_0(\R^d)$ converging to $f$ in $H^1$-norm. By taking a subsequence if necessary, which we still denote by $f_k$, we can suppose the $f_k$ converge pointwise almost everywhere to $f$ with respect to the Lebesgue measure. Then by Fatou's lemma and \eqref{eq:D.1},
 \begin{equation*}
 \| |x|^{-1} f\|^2_{L^2(\Omega)} \le \liminf_{k \to \infty} \||x|^{-1} f_k \|_{L^2(\Omega)} \lesssim \liminf_{k \to \infty}  \| \nabla f_k \|_{L^2(\Omega)} =   \| \nabla f \|_{L^2(\Omega)} .
 \end{equation*}
 \eproof 
 
\section{Construction of the function $\rho_m$}

We will prove the following

\begin{lemma} \label{E.1}
Given an integer $m\gg 1$ there is a real-valued function $\rho_m\in C_0^\infty(\mathbb{R})$ such that $\rho_m\ge 0$, 
$\int_{-\infty}^\infty \rho_m(\sigma)d\sigma=1$,
${\rm supp}\,\rho_m\subset[0,\ln m+1]$ and
\begin{equation}\label{eq:E.1}
\left|\partial_\lambda^k\rho_m(\lambda)\right|\le 3^{k+1}k!,\quad 0\le k\le m,\quad\forall \lambda\in\mathbb{R}.
 \end{equation}
\end{lemma}

{\it Proof.} It follows from the analysis in \cite[Chapter 1.3]{kn:H}. Here we sketch the main points. Let $H_1$ be 
the characteristic function of the interval $[0,1]$, that is, $H_1(\lambda)=1$ when $0<\lambda<1$, $H_1(\lambda)=0$ otherwise.
Given any $a>0$, set $H_a(\lambda)=a^{-1}H_1(\lambda/a)$. Clearly, $\int_{-\infty}^\infty H_a(\sigma)d\sigma=1$.
Let $a_k=(k+1)^{-1}$, $k=0,1,...$. Define $u_k$ to be the product of convolutions
\begin{equation*}
u_m=H_{a_0}\ast \cdots \ast H_{a_{m+1}}.
\end{equation*}
Then $u_m\in C^m(\mathbb{R})$ is supported in $[0,b_m]$, where
\begin{equation*}
b_m \defeq \sum_{k=0}^{m+1}a_k=\sum_{k=1}^{m+2}k^{-1}=\ln m+\gamma+O(m^{-1})<\ln m+\frac{3}{5}
\end{equation*}
for large $m$, where $0<\gamma<0.6$ is Euler's constant (see the proof of \cite[Theorem 1.3.5]{kn:H}). 
Moreover, the derivatives of $u_m$ satisfy the bounds
(see $(1.3.12)'$ of \cite{kn:H})
\begin{equation}\label{eq:E.2}
\left|\partial_\lambda^ku_m(\lambda)\right|\le 2^k(k+1)!\le 3^{k+1}k!,\quad 0\le k\le m,\quad\forall \lambda\in\mathbb{R}.
 \end{equation}
Clearly, we also have $\int_{-\infty}^\infty u_m(\sigma)d\sigma=1$. Let now $\phi\in C_0^\infty(\mathbb{R})$ be independent of $m$,
supported in $[0,1/5]$ and such that $\phi\ge 0$, 
$\int_{-\infty}^\infty \phi(\sigma)d\sigma=1$. Then the function $\rho_m=u_m\ast\phi$ satisfies the conditions of the lemma.
In particular the bounds (\ref{eq:E.1}) follow from (\ref{eq:E.2}).
\eproof

We can derive from the above lemma the following

\begin{lemma} \label{E.2}
Given an integer $m\gg 1$ there is a real-valued function $\rho_m^\sharp\in C_0^\infty(\mathbb{R})$ such that $\rho_m^\sharp\ge 0$, 
$\int_{-\infty}^\infty\rho_m^\sharp(\sigma)d\sigma=1$,
${\rm supp}\,\rho_m^\sharp\subset[0,1]$ and
\begin{equation}\label{eq:E.3}
\left|\partial_\lambda^k\rho_m^\sharp(\lambda)\right|\le (3\ln m)^{k+1}k!,\quad 0\le k\le m,\quad\forall \lambda\in\mathbb{R}.
 \end{equation}
\end{lemma}

{\it Proof.} Let $\rho_m$ be the function from Lemma \ref{E.1} and set 
\begin{equation*}
\rho_m^\sharp(\lambda)=(\ln m+1)\rho_m((\ln m+1)\lambda).
\end{equation*}
Clearly, the bounds (\ref{eq:E.3}) follow from (\ref{eq:E.1}).
\eproof

\end{document}